\pgfplotsset{compat=newest,compat/show suggested version=false}
\numberwithin{equation}{section}
\theoremstyle{plain}
	\newtheorem{theorem}{Theorem}[section]
	\newtheorem*{theorem*}{Theorem}
	\newtheorem{lemma}{Lemma}[section]
	\newtheorem*{proposition*}{Proposition}
	\renewcommand{\paragraph}[1]{\medskip \noindent \textbf{#1}.}
\theoremstyle{remark}
	\newtheorem{remark}{\textbf{Remark}}[]
\theoremstyle{definition}
\newcommand{\RR}{\mathbb{R}}
\newcommand{\NN}{\mathbb{N}}
\newcommand{\ZZ}{\mathbb{Z}}
\newcommand{\R}{{\amsmathbb R}}
\newcommand{\N}{{\amsmathbb N}}
\newcommand{\Z}{{\amsmathbb Z}}
\newcommand{\T}{{\amsmathbb T}}
\newcommand{\Sp}{{\amsmathbb S}}
\newcommand{\D}{{\partial}}
\DeclareSymbolFontAlphabet{\amsmathbb}{AMSb}
\title{On Deterministic Numerical Methods for the quantum Boltzmann-Nordheim Equation. \\ I. Spectrally accurate approximations, Bose-Einstein condensation, Fermi-Dirac saturation}
\author{Alexandre Mouton}
\address{CNRS, Univ. Lille, UMR 8524 – Laboratoire Paul Painlevé F-59000 Lille, France}
\email{alexandre.mouton@univ-lille.fr}
\author{Thomas Rey}
\address{Univ. Lille, CNRS, UMR 8524, Inria – Laboratoire Paul Painlevé F-59000 Lille, France }
\email{thomas.rey@univ-lille.fr}
\thanks{T.R. was partially funded by Labex CEMPI (ANR-11-LABX-0007-01) and ANR Project MoHyCon (ANR-17-CE40-0027-01).}
\begin{document}

\begin{abstract} Spectral methods, thanks to their high accuracy and the possibility to use fast algorithms, represent an effective way to approximate the collisional kinetic equations of Boltzmann type, such as the Boltzmann-Nordheim equation. 
	This equation, modeled on the seminal Boltzmann equation, describes using a statistical physics formalism the time evolution of a gas composed of bosons or fermions. 
	Using the spectral-Galerkin algorithm introduced in [F. Filbet, J. Hu, and S. Jin, \textit{ESAIM: Math. Model. Numer. Anal.}, 2011], together with some novel parallelization techniques, we investigate some of the conjectured properties of the  large time behavior of the solutions to this equation.
	In particular, we are able to observe numerically both Bose-Einstein condensation and Fermi-Dirac relaxation. \\[1em]
    \textsc{Keywords:} Boltzmann-Nordheim equation, quantum, Bose-Einstein condensation, Fermi-Dirac saturation, spectral method, large time  behavior\\[.5em]
    \textsc{2010 Mathematics Subject Classification:} 76P05, 
    65N35, 
    82C40. 
\end{abstract}
  \maketitle
  
  
  \section{Introduction}
The {\bf Boltzmann-Nordheim equation} (BNE), also known as the quantum Boltzmann equation, models the time evolution of a phase-space density $f= f(t,\mathbf x, \mathbf k)$, describing the probability to find at time $t \geq 0$ a quantum particle localized at the infinitesimal position $d\mathbf x \, d\mathbf k$, where $\mathbf x \in \Omega \subset \RR^3$ and $\mathbf k \in \RR^3$. The quantity $\mathbf k$ designates the energy level of the particle and corresponds to the usual ``velocity'' variable in a kinetic Boltzmann-like equation. Its total momentum is then given by $\mathbf v = \hbar \mathbf k$, where $\hbar$ is the reduced Planck constant, and we shall keep the $\mathbf v$ notation throughout the paper. We are interested in this work in efficient, deterministic numerical simulation of this equation.

This equation was first formulated by Uehling and Uhlenbeck in the seminal paper \cite{UehlingUhlenbeck:1933}, starting from a classical Boltzmann equation with heuristic arguments.  It can be used to model both bosons and fermions gases, possibly trapped by a confining potential.
Due to its high dimensionality, the study of the full quantum Boltzmann equation is still widely open. The equation is indeed posed in the usual six-dimensional kinetic phase space. 
Moreover, the collision operator is defined by a seven-fold integral itself. This causes a lot of difficulty, both theoretically and numerically.
For example, in the space homogeneous  bosonic case, it is well known that the particle distribution function $f$ can develop finite time blow-up (weak convergence towards Dirac deltas even if the kinetic energy is conserved), the so-called \textbf{Bose-Einstein condensation}.

The study of the main mathematical properties of the collision operator has been done in \cite{dolbeault1994kinetic,EscobedoMischler:2001,Lu2014,BriantEinav2013}, allowing to understand almost completely the Cauchy problem (existence, stability, creation and propagation of moments, convergence towards the equilibrium) in the space homogeneous setting, without confining potential. The recent breakthrough \cite{EscobedoVelazquez2015} proved rigorously the Bose-Einstein condensation  in the bosonic case, under very mild hypotheses on the moments of the initial condition. 
Nevertheless, the precise blowup behavior (localization of the explosion, rates, etc.) is still open.
Finally, the theoretical study of the full space dependent problem is also mostly open, apart from some recent results concerning the anisotropic setting and Bose-Einstein condensates that can be found in \cite{li2017global}.

The numerical study of this equation is also difficult for all these reasons. The first attempt to compute numerically the collision operator, in a simplified setting (taking advantage of some very specific symmetry properties) was done in the work \cite{PareschiMarkowich:2005}. Extensions of this idea were introduced in \cite{HuYi11} by exploiting the convolution-like structure of the collision operator. This work is following along the lines of the so-called classical and fast spectral methods for the Boltzmann equation, that was investigated by many authors in the series of works \cite{PP96,MoPa:2006,FiMoPa:2006,FM11,FilbetPareschiRey:2015,PareschiRey2020,hu2020new,pareschi2021moment}.
The full extent of this convolution idea was used in \cite{FilbetHuJin:2012}, allowing to write a fast (spectral) method able to compute the full collision operator with the ``reasonably low'' numerical cost of $\mathcal{O}(N^5 \log(N))$ operations, where $N$ is the number of unknowns in each velocity dimension, and with spectral accuracy. 
Extension of this method to the full inhomogeneous case, and to the asymptotic preserving setting was then done in the series of works \cite{HuYing:2015,Hu2015,Hu2015a} for the fermionic (electrons) case only. Nevertheless, these works only deal with the simplified 1-dimensional (1d) in $\mathbf x$, 2-dimensional in $\mathbf v$ setting and use an approximation of the collision operator. Even the slightly more realistic 1d in $\mathbf x$, 3d in $\mathbf v$ case was never tackled up to the authors knowledge, not to mention the full 3d in $\mathbf x$, 3d in $\mathbf v$ case.
One should also note the earlier attempts from \cite{JinPareschi2000, Klar1999}, where a diffusive relaxation system was used, as an approximation of the full equation, and the Fokker-Planck like approach recently seen in \cite{CarrilloHopfWolfram2020}.	

The paper is organized as follows: After a small introduction on the BNE equation and its main mathematical features, we detail in Section \ref{sec:stationary_state} its large time behavior, which depends on the type of considered particles: classical ones, Bosons, or Fermions, and on some critical parameters which insure condensation in the latter cases. We then present the fast algorithm from \cite{FilbetHuJin:2012} along with how we decided to implement it in Section \ref{sec:FBNEA}. Some more information on this topic is also available in the Appendix. Section \ref{sec:Rescaling} is then devoted to the presentation of the so-called rescaling-velocity method, which allows to improve the accuracy of the fast spectral algorithm. Section \ref{sec:NumRes} present some numerical results on the method, emphasizing first on the spectral accuracy of the algorithm in Section \ref{sec:SpecAccu}, and then on its long time behavior in Section \ref{sec:NumTrend}, for the both 2D and 3D fermions and bosons cases.

\subsection{The Boltzmann-Nordheim equation}

We are then interested in the following kinetic equation in the $d_{\mathbf{x}} \times d_{\mathbf{v}}+1$-dimensional phase space, for $d_\mathbf{x} \leq d_\mathbf{v} \leq 3$:
\begin{equation} \label{Boltzmann_start}
	\left \{ \begin{aligned}
		& \D_{t}f + \tau \, \mathbf{v} \cdot \nabla_{\mathbf{x}}f = c\,\mathcal{Q}_\alpha(f) \, , \\
		& f(0,\mathbf x, \mathbf v) =  f_0(\mathbf x, \mathbf v),
	\end{aligned} \right.
\end{equation}
where 
\begin{itemize}
\item $\mathbf{x} \in \Omega_{\mathbf{x}} \subset \R^{d_{\mathbf{x}}}$ is the dimensionless position variable,
\item $\mathbf{v} \in \R^{d_{\mathbf{v}}}$ is the dimensionless velocity variable,
\item $t \in \R_{+}$ is the dimensionless time variable,
\item $f:\R_{+} \times \Omega_{\mathbf{x}} \times \R^{d_{\mathbf{v}}} \to \R$ is the distribution function of the particles,
\item $\tau \in \{0,1\}$ is a dimensionless parameter for indicating if the transport part is taken into account or not,
\item $c \geq 0$ is a dimensionless scaling parameter for the collisions (usually taken as the inverse of the characteristic Knudsen number),
\item $\mathcal{Q}_\alpha(f):\R_{+}\times\Omega_{\mathbf{x}}\times\R^{d_{\mathbf{v}}} \to \R$ is the collision operator, and $\alpha$ characterizes the type of particles, to be specified later.
\end{itemize}
In the present case, the collision operator $\mathcal{Q}_\alpha(f)$ is defined as follows:
\begin{multline}
\label{eq:BNE_OP}
\mathcal{Q}_\alpha(f)(t,\mathbf{x},\mathbf{v})
:= \int_{\R^{d_{\mathbf{v}}} \times \Sp^{d_{\mathbf{v}}-1}} B\left(|\mathbf{v}-\mathbf{v}_{*}|,\theta\right) \, \Big[ f(t,\mathbf{x},\mathbf{v}')\,f(t,\mathbf{x},\mathbf{v}_{*}')\,\left(1-\alpha \, f(t,\mathbf{x},\mathbf{v}) - \alpha\, f(t,\mathbf{x},\mathbf{v}_{*})\right) \\
 - f(t,\mathbf{x},\mathbf{v})\, f(t,\mathbf{x},\mathbf{v}_{*}) \, \left(1-\alpha\,f(t,\mathbf{x},\mathbf{v}') - \alpha\,f(t,\mathbf{x},\mathbf{v}_{*}')\right)\Big] \, d\mathbf{v}_{*}\,d\bm{\sigma} \, ,
\end{multline}
with $(\mathbf{v}',\mathbf{v}_{*}',\theta) \in \R^{d_{\mathbf{v}}} \times \R^{d_{\mathbf{v}}} \times [0,\pi]$ linked to $(\mathbf{v},\mathbf{v}_{*},\bm{\sigma}) \in \R^{d_{\mathbf{v}}} \times \R^{d_{\mathbf{v}}} \times \Sp^{d_{\mathbf{v}}-1}$ by the relations
\begin{equation*} 
\label{changevar_sigma}
\left\{
\begin{array}{rcl}
\mathbf{v}' &=& \cfrac{\mathbf{v}+\mathbf{v}_{*}}{2} + \cfrac{|\mathbf{v}-\mathbf{v}_{*}|}{2} \, \bm{\sigma} \, , \\
\mathbf{v}_{*}' &=& \cfrac{\mathbf{v}+\mathbf{v}_{*}}{2} - \cfrac{|\mathbf{v}-\mathbf{v}_{*}|}{2} \, \bm{\sigma} \, ,
\end{array}
\right.
\end{equation*}
where 
\[\cos\theta = \cfrac{\mathbf{v}-\mathbf{v}_{*}}{|\mathbf{v}-\mathbf{v}_{*}|}\cdot \bm{\sigma}, \] 
and the collision kernel $B:\R^{d_{\mathbf{v}}}\times [0,\pi] \to \R$ of variable hard sphere (VHS) type:
\begin{equation*} 
B(r,\theta) = \Phi(r) \, b(\cos\theta) \, , \qquad \Phi(r) = C_{\Phi}\,r^{\gamma} \, ,
\end{equation*}
$C_{\Phi} > 0$, $\gamma \in [0,1]$ being given constants. Finally, $\alpha$ is defined as follows:
\begin{equation*}
\alpha = \left\{
\begin{array}{ll}
+\hbar^{d_{\mathbf{v}}} \, , & \textnormal{for Fermi-Dirac particles,} \\
-\hbar^{d_{\mathbf{v}}} \, , & \textnormal{for Bose-Einstein particles.}
\end{array}
\right.
\end{equation*}

\begin{remark}
	Note that the case $\gamma = 0$ and $d_\mathbf{v} =2$ represents the toy model of maxwellian pseudo-molecules, whereas $\gamma = 1$ with $d_\mathbf{v} = 3$ models the physically relevant case of a hard sphere collision dynamics.
\end{remark}

We also assume that the microscopic dynamics is subject to the so-called Grad's cut-off assumption, namely that the function $b:[-1,1]\to\R_{+}$ is such that $\|b\|_{L^{\infty}(-1,1)} < +\infty$.

\begin{remark}
	\label{rem:Quadri2Tri}
	The Boltzmann-Nordheim collision operator, although being quadrilinear in its full form \eqref{eq:BNE_OP}, is actually only trilinear in the two cases of interests $\alpha \in \{0,1\}$ using cancellations in \eqref{eq:weakForm}	.
\end{remark}	

\paragraph{Weak form and macroscopic properties} 
Note that, if we define $\bm{\omega} = \cfrac{\mathbf{v}-\mathbf{v}'}{|\mathbf{v}-\mathbf{v}'|}$, we have
\begin{equation*}
\left\{
\begin{array}{rcl}
\mathbf{v}' &=& \mathbf{v} - \left( (\mathbf{v}-\mathbf{v}_{*})\cdot\bm{\omega}\right)\, \bm{\omega} \, , \\
\mathbf{v}_{*}' &=& \mathbf{v}_{*} + \left( (\mathbf{v}-\mathbf{v}_{*})\cdot\bm{\omega}\right)\,\bm{\omega} \, ,
\end{array}
\right.
\qquad \left\{
\begin{array}{rcl}
\mathbf{v} &=& \mathbf{v}' - \left( (\mathbf{v}'-\mathbf{v}_{*}')\cdot\bm{\omega}\right)\,\bm{\omega} \, , \\
\mathbf{v}_{*} &=& \mathbf{v}_{*}' + \left( (\mathbf{v}'-\mathbf{v}_{*}')\cdot\bm{\omega}\right)\,\bm{\omega} \, ,
\end{array}
\right.
\end{equation*}
meaning that the transformation $(\mathbf{v},\mathbf{v}_{*}) \mapsto (\mathbf{v}',\mathbf{v}_{*}')$ is involutive. Thanks to this remark, one  has the following weak form for the collision operator: for any test function $\Psi$ such that the left hand side is defined,
\begin{multline}
\label{eq:weakForm}
		\int_{\R^{d_{\mathbf{v}}}} \mathcal{Q}_\alpha(f)(t,\mathbf{x},\mathbf{v})\, \psi(\mathbf{v})\, d\mathbf{v} = \int_{\R^{d_{\mathbf{v}}} \times \R^{d_{\mathbf{v}}} \times \Sp^{d_{\mathbf{v}}-1}} B\left(|\mathbf{v}-\mathbf{v}_{*}|,\theta\right) \\ \left[ f'\,f_{*}'\left(1-\alpha f - \alpha f_{*} \right) - f\, f_{*} \left(1-\alpha f' - \alpha f_{*}' \right)\right]  \left[ \Psi(\mathbf v_*') + \Psi(\mathbf v') - \Psi(\mathbf v) - \Psi(\mathbf v_*) \right] \, d\mathbf{v}_{*}\, d \mathbf{v} \, d\bm{\sigma},
\end{multline}
where  we used the shorthand notations $f = f(t,\mathbf{x},\mathbf{v})$, $f_* = f(t,\mathbf{x},\mathbf{v}_*)$,
$f ^{'} = f(t,\mathbf{x},\mathbf{v}')$, $f_* ^{'} = f(t,\mathbf{x},\mathbf{v}_* ^{'})$.

In particular, one has using \eqref{changevar_sigma} that the collisior operator preserves the global mass, momentum and kinetic energy:
\begin{equation}
\label{eq:conservationQ}
\int_{\R^{d_{\mathbf{v}}}} \mathcal{Q}_\alpha(f)(t,\mathbf{x},\mathbf{v})\, \psi(\mathbf{v})\, d\mathbf{v} = 0 \, ,
\end{equation}
for $\psi(\mathbf{v}) = 1$, $\psi(\mathbf{v}) = \mathbf{v}$, and $\psi(\mathbf{v}) = |\mathbf{v}|^{2}$.

Finally, let us assume that the distribution function $f$ satifies the following properties:
\begin{itemize}
\item $f(t,\mathbf{x},\mathbf{v}) > 0$ for any $t,\mathbf{x},\mathbf{v}$;
\item For the Fermi-Dirac case ($\alpha > 0$), $f(t,\mathbf{x},\mathbf{v}) < \cfrac{1}{\alpha}$ for any $t,\mathbf{x},\mathbf{v}$.
\end{itemize}
Under these assumptions, we can define the local entropy associated to $f$ as
\begin{equation*}
\mathcal{H}[f](t,\mathbf{x}) = -\int_{\R^{d_{\mathbf{v}}}} \left[f(t,\mathbf{x},\mathbf{v}) \, \ln\left(f(t,\mathbf{x},\mathbf{v})\right)
+ \cfrac{1-\alpha f(t,\mathbf{x},\mathbf{v})}{\alpha}\, \ln\left(1-\alpha f(t,\mathbf{x},\mathbf{v})\right)\right]\, d\mathbf{v} \, .
\end{equation*}
Taking $\psi(t,\mathbf{x},\mathbf{v}) = \ln\left(\cfrac{f(t,\mathbf{x},\mathbf{v})}{1-\alpha\,f(t,\mathbf{x},\mathbf{v})}\right)$ in \eqref{eq:weakForm}, we get the following expansion of the entropy dissipation functional:
\begin{align*}
\mathcal{D}[f](t,\mathbf{x}) & := \int_{\R^{d_{\mathbf{v}}}} \mathcal{Q}_\alpha(f)(t,\mathbf{x},\mathbf{v})\, \psi(t,\mathbf{x},\mathbf{v})\, d\mathbf{v} \\
&= -\cfrac{1}{4} \int_{\R^{d_{\mathbf{v}}}}\int_{\R^{d_{\mathbf{v}}}} \int_{\Sp^{d_{\mathbf{v}}-1}} B(|\mathbf{v}-\mathbf{v}_{*}|, \theta) \\
&\qquad \left[ f'\,f_{*}'\left(1-\alpha f - \alpha f_{*} \right) - f\, f_{*} \left(1-\alpha f' - \alpha f_{*}' \right)\right] \log \left(\cfrac{f\, f_{*} \left(1-\alpha f' - \alpha f_{*}' \right)}{f'\,f_{*}'\left(1-\alpha f - \alpha f_{*} \right)}\right) \, d\bm{\sigma}\, d\mathbf{v}_{*}\, d\mathbf{v} \, .
\end{align*}
In addition, we have
\begin{displaymath}
\int_{\R^{d_{\mathbf{v}}}} \D_{t}f(t,\mathbf{x},\mathbf{v}) \, \psi(t,\mathbf{x},\mathbf{v}) \, d\mathbf{v} = -\D_{t}\mathcal{H}[f](t,\mathbf{x},\mathbf{v}) \, ,
\end{displaymath}
and
\begin{displaymath}
\int_{\R^{d_{\mathbf{v}}}} \mathbf{v} \cdot \nabla_{\mathbf{x}} f(t,\mathbf{x},\mathbf{v}) \, \psi(t,\mathbf{x},\mathbf{v}) \, d\mathbf{v} = \nabla_{\mathbf{x}} \cdot \left( \cfrac{1}{\alpha}\, \int_{\R^{d_{\mathbf{v}}}} \mathbf{v} \, \left(\ln\left(1-\alpha\,f(t,\mathbf{x},\mathbf{v})\right)\right) \, d\mathbf{v} \right) \, .
\end{displaymath}
We finally deduce that the global entropy $\displaystyle \int_{\R^{d_{\mathbf{x}}}} \mathcal{H}[f](t,\mathbf{x})\, d\mathbf{x}$ increases as $t \to +\infty$ since
\begin{equation*}
\int_{\Omega_{\mathbf{x}}} \D_{t}\mathcal{H}[f](t,\mathbf{x})\, d\mathbf{x} = -\int_{\Omega_{\mathbf{x}}} \mathcal{D}[f](t,\mathbf{x})\, d\mathbf{x} \geq 0 \, , \qquad \forall \, t \geq 0 \, .
\end{equation*}

\subsection{Convergence towards a steady state}

\label{sec:stationary_state}

In this section, we focus on the homogeneous equation associated to \eqref{Boltzmann_start} that is characterized by $\tau = 0$:
\begin{equation} \label{Boltzmann_homo}
	\left \{ \begin{aligned}
		& \D_{t}f = c\,\mathcal{Q}_\alpha(f) \, , \\
		& f(0, \mathbf v) =  f^0(\mathbf v).
	\end{aligned} \right.
\end{equation}
 In such case, using \eqref{eq:conservationQ} the momenta of order 0, 1, and 2 of $f$ are constant:
\begin{equation}
\label{eq:moments}
\rho(t) = \rho^0 := \int_{\R^{d_{\mathbf{v}}}} f^0 \, d\mathbf{v} \, , \quad
\mathbf{u}(t) = \mathbf{u}^0 := \frac{1}{\rho^0}\, \int_{\R^{d_{\mathbf{v}}}} \mathbf{v}\, f^0\, d\mathbf{v} \, , \quad
e(t) = e^0 := \cfrac{1}{2\rho^0}\, \int_{\R^{d_{\mathbf{v}}}} |\mathbf{v}-\mathbf{u}^0|^{2}\, f^0 \, d\mathbf{v} \, .
\end{equation}
It is well known that the distribution function $f$ converges to a limit state $f^{\infty}:\R^{d_{\mathbf{v}}} \to \R_{+}$ that depends only on these $d_\mathbf{v} + 2$ quantities as $t\to+\infty$, see e.g. \cite{BriantEinav2013,EscobedoVelazquez2015,li2017global}. To characterize this limit state, we proceed as follows:

\paragraph{The classical case $\alpha = 0$} The limit state is the following maxwellian function \cite{CIP:94}
\begin{equation*}
f^{\infty}(\mathbf{v}) = \mathcal{M}_{c}(\mathbf{v}) = \cfrac{\rho^0}{(2\pi T)^{d_{\mathbf{v}}/2}}\, \exp\left(-\cfrac{|\mathbf{v}-\mathbf{u}^0|^{2}}{2T}\right) \, ,
\end{equation*}
where the temperature $T$ is obtained from the internal energy $e^0$ thanks to the relation
\begin{equation*}
T = \cfrac{2}{d_{\mathbf{v}}}\, e^0 \, .
\end{equation*}

\paragraph{The quantum case $\alpha \neq 0$} We introduce the so-called indicator function $\chi$
\begin{equation} \label{def_chi}
\chi(\mathbf{v}) = \cfrac{\rho^0}{|D(\mathbf{u}^{0},A)|} \, \mathds{1}_{D(\mathbf{u}^{0},A)}(\mathbf{v}) \, , \qquad A = \sqrt{\cfrac{2\, e^0 \, (d_{\mathbf{v}}+2)}{d_{\mathbf{v}}}} \, ,
\end{equation}
and the following quantum maxwellian function \cite{BriantEinav2013}
\begin{equation} \label{def_Mq_2d}
\mathcal{M}_{q}(\mathbf{v}) = \cfrac{1}{|\alpha|}\, \cfrac{1}{z^{-1} \exp\left (\frac{|\mathbf{v}-\mathbf{u}^0|^{2}}{2T}\right ) + \textnormal{sgn}(\alpha)} \, ,
\end{equation}
where $z$ and $T$ are linked to $\rho^0$ and $e^0$ as follows: since the mass and the internal energy are preserved we integrate $\mathcal{M}_{q}(\mathbf{v})$ and $|\mathbf{v}-\mathbf{u}^{0}|^{2}\,\mathcal{M}_{q}(\mathbf{v})$ to get
\begin{subnumcases}{\label{mom02_qmaxwell}}
\rho^0 = \cfrac{(2\pi T)^{\frac{d_{\mathbf{v}}}{2}}}{|\alpha|}\, K_{\frac{d_{\mathbf{v}}}{2}}(z) \, , & \label{mom0_qmaxwell}\\
e^0 = \cfrac{d_{\mathbf{v}}T}{2}\, \cfrac{K_{\frac{d_{\mathbf{v}}}{2}+1}(z)}{K_{\frac{d_{\mathbf{v}}}{2}}(z)} \, , & \label{mom2_qmaxwell}
\end{subnumcases}
where $K_{\nu}(z)$ is defined as
\begin{equation*}
K_{\nu}(z) = \cfrac{1}{\Gamma(\nu)}\, \int_{0}^{+\infty} \cfrac{x^{\nu-1}}{z^{-1}e^{x}+\textnormal{sgn}(\alpha)}\, dx = \left\{
\begin{array}{ll}
F_{\nu}(z) \, , & \textnormal{if $\alpha > 0$ (Fermi-Dirac),} \\
B_{\nu}(z) \, , & \textnormal{if $\alpha < 0$ (Bose-Einstein),}
\end{array}
\right.
\end{equation*}
and $\Gamma$ is the Gamma function
\begin{equation*}
\Gamma(\nu) = \int_{0}^{+\infty} x^{\nu-1}\, e^{-x}\, dx \, .
\end{equation*}
The quantities $F_{\nu}$ and $B_{\nu}$ are known as complete Fermi-Dirac and Bose-Einstein integrals respectively.

\begin{remark}
\label{rem:Fnu}
Setting $\tilde{F}_{\nu}(\mu) = F_{\nu}(e^{\mu})$ for any $\mu \in \R$ and $\nu > -1$, one can show (see e.g \cite{boersma1991asymptotic}) that
\begin{equation*}
\left \{
\begin{aligned}
& \tilde{F}_{\nu}(\mu) = e^{\mu} + o(1)\, , & \textnormal{as $\mu \to -\infty$,} \\
& \tilde{F}_{\nu}(\mu) = \cfrac{\mu^{\nu}}{\Gamma(\nu+1)} \, \left[ 1+\cfrac{\pi\,\nu\,(\nu-1)}{6\mu^{2}} + \mathcal{O}\left(\cfrac{1}{\mu^{4}}\right)\right] \, , & \textnormal{as $\mu \to +\infty$.}
\end{aligned} \right.
\end{equation*}
One also has for  $\nu > 1$ that 
\begin{equation*}
\left \{
\begin{aligned}
& B_{\nu}(z) = z + o(1) \, , & \textnormal{as $z \to 0^{+}$,} \\
& B_{\nu}(z) = \zeta(\nu) + o(1)\, , & \textnormal{as $z \to 1^{-}$,}
\end{aligned}
\right. \end{equation*}
where $\zeta(\nu) = \displaystyle \sum_{k\,=\,1}^{+\infty}\cfrac{1}{k^{\nu}}$ is Riemann's zeta function. Finally
\begin{equation*}
B_{1}(z) \to +\infty \, , \qquad \textnormal{as $z \to 1^{-}$.}
\end{equation*}
\end{remark}

If we inject \eqref{mom2_qmaxwell} in \eqref{mom0_qmaxwell} in order to get rid of $T$, we obtain the following nonlinear equation in $z$:
\begin{equation} \label{eq_fugacity}
|\alpha|\,\rho^0\,\left(\cfrac{\nu}{2\pi e^0}\right)^{\nu} = \cfrac{K_{\nu}(z)^{\nu+1}}{K_{\nu+1}(z)^{\nu}} \, , \qquad \textnormal{with $\nu = \cfrac{d_{\mathbf{v}}}{2}$.}
\end{equation}
Now, to distinguish between the fermionic and bosonic cases, let us define $\mathcal{F}_{\nu}$ and $\mathcal{B}_{\nu}$ as 
\begin{equation*}
\mathcal{F}_{\nu} = \cfrac{F_{\nu}(z)^{\nu+1}}{F_{\nu+1}(z)^{\nu}} \, , \qquad \mathcal{B}_{\nu}(z) = \cfrac{B_{\nu}(z)^{\nu+1}}{B_{\nu+1}(z)^{\nu}} \, .
\end{equation*}

\begin{figure}
\begin{center}
\begin{tabular}{cc}
\includegraphics[width=0.45\textwidth]{./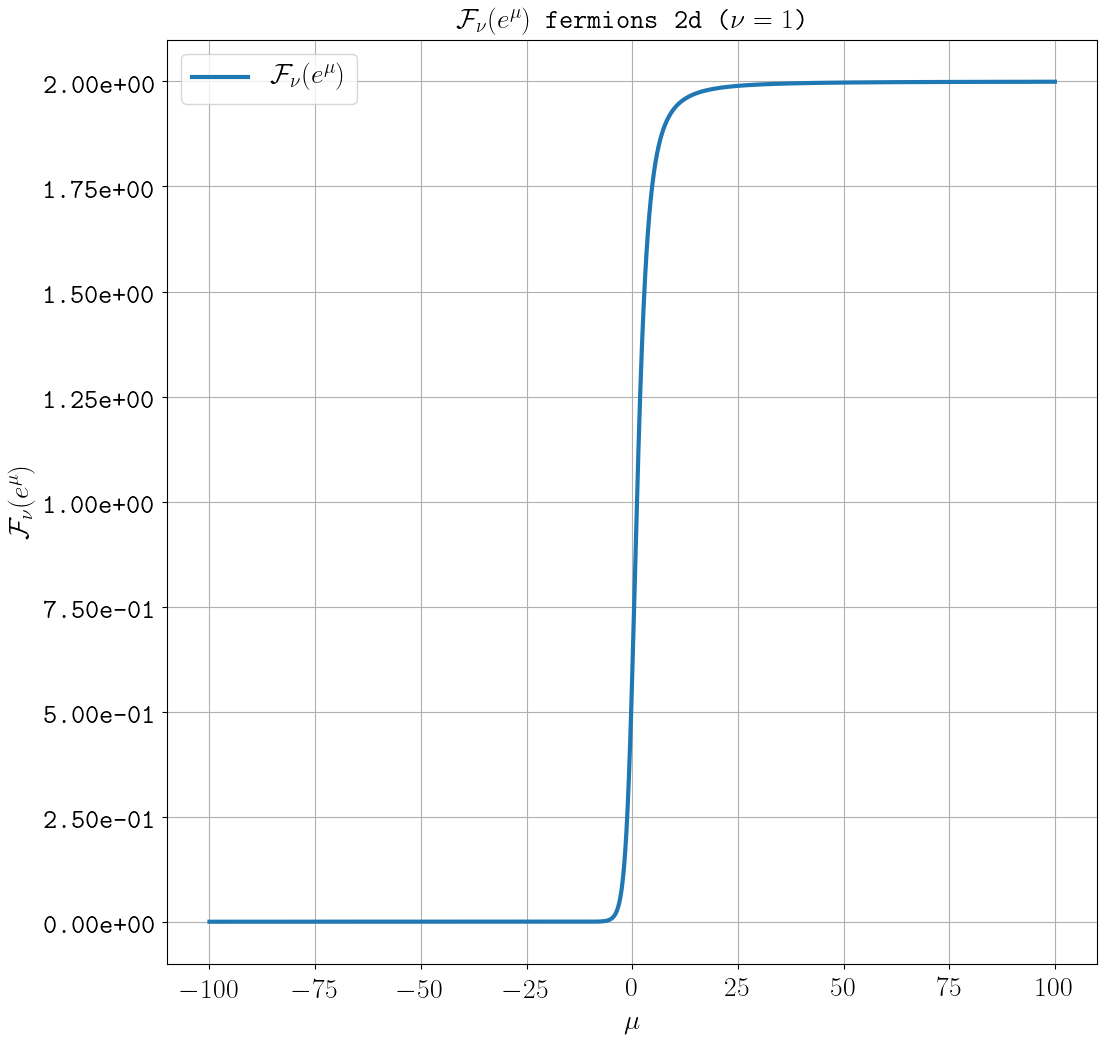} & \includegraphics[width=0.45\textwidth]{./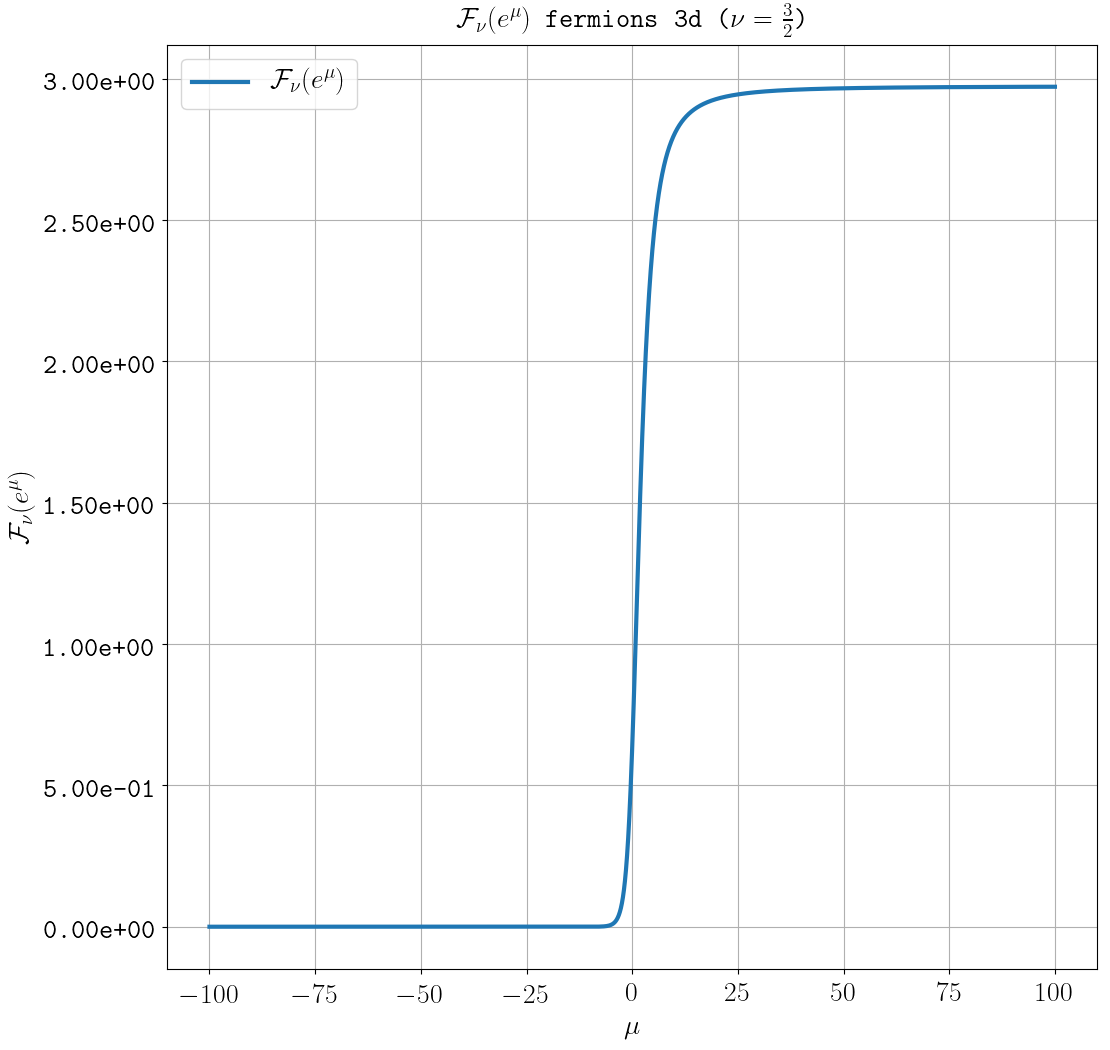} \\
(a) & (b) \\
\includegraphics[width=0.45\textwidth]{./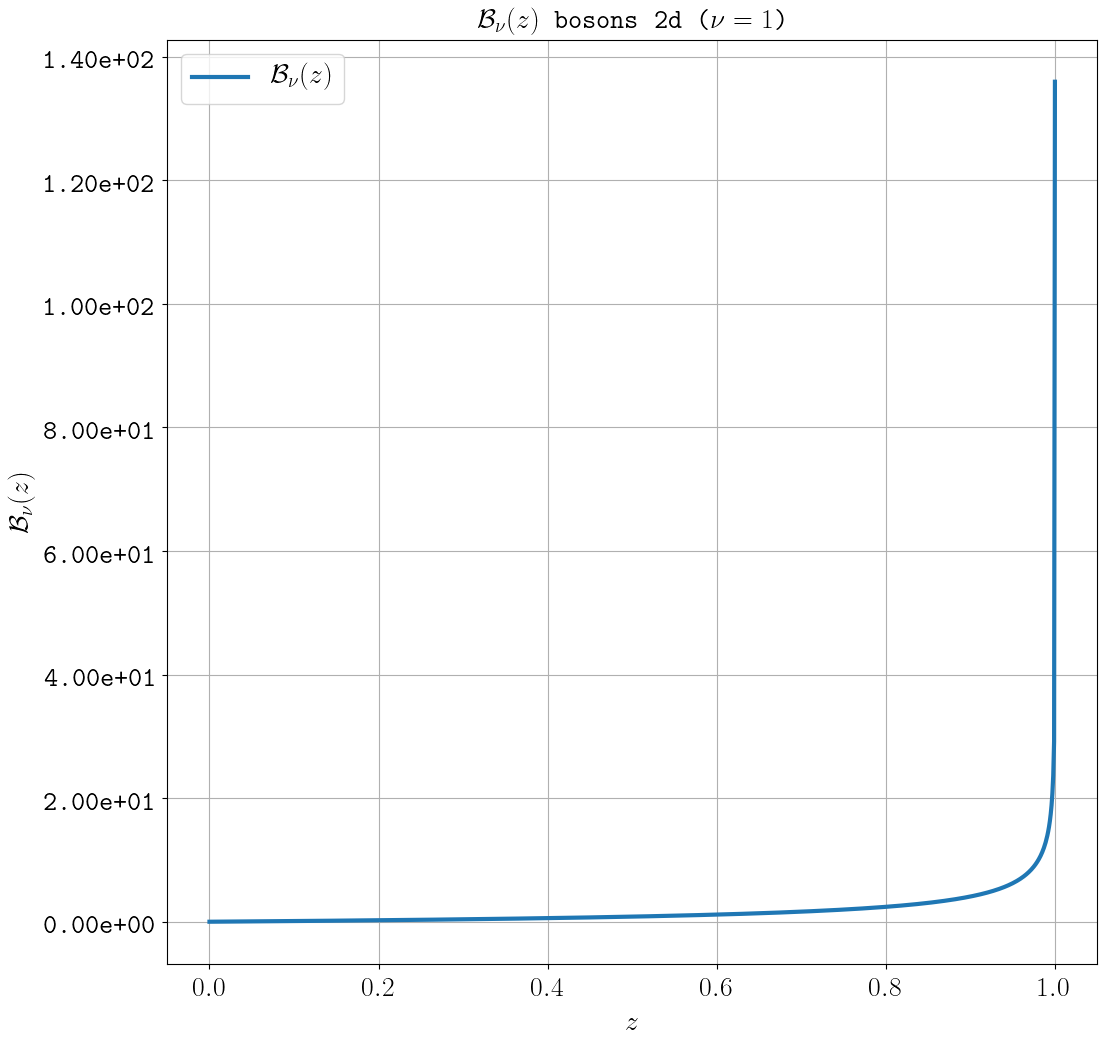} & \includegraphics[width=0.45\textwidth]{./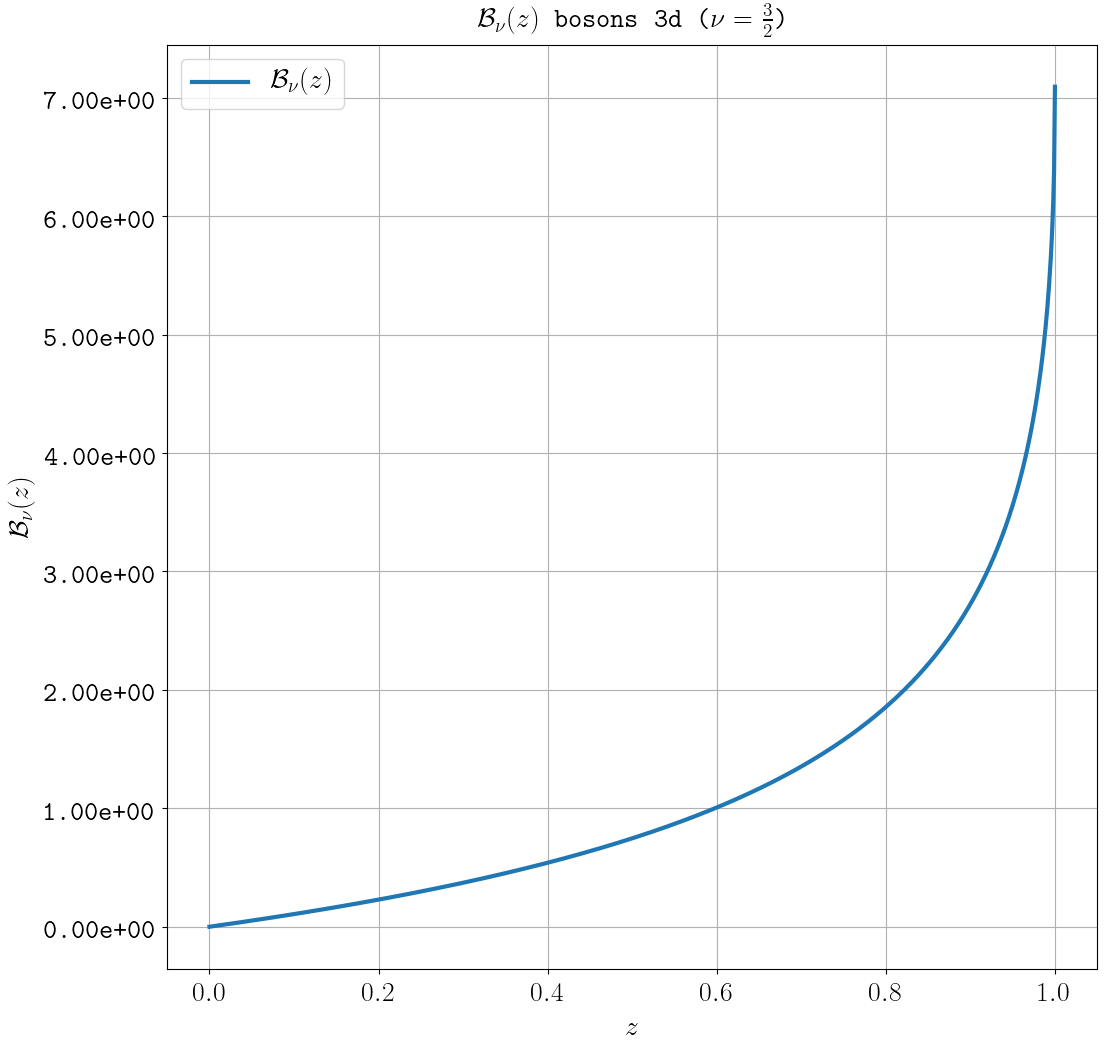} \\
(c) & (d)
\end{tabular}
\caption{Graphs of functions $\mu \longmapsto \mathcal{F}_{\nu}(e^{\mu})$ for $\nu = 1$ (a) and $\nu = \cfrac{3}{2}$ (b), and $z \mapsto \mathcal{B}_{\nu}(z)$ for $\nu = 1$ (c) and $\nu = \cfrac{3}{2}$ (d). We notice that, for each case, $\mathcal{F}_{\nu}$ and $\mathcal{B}_{\nu}$ are valued in the corresponding $I_{\textnormal{eq}}$ set defined in \eqref{def_Ieq}.} \label{FnuBnu}
\end{center}
\end{figure}

We observe numerically that $\mathcal{F}_{\nu}$ and $\mathcal{B}_{\nu}$ are nondecreasing, continuous functions \footnote{Unfortunately, we were not able to prove this assertion.} on their respective definition domain (see Figure \ref{FnuBnu}). In addition, thanks to the properties of $F_{\nu}$ and $B_{\nu}$ in Remark \ref{rem:Fnu}, one can show if $d_{\mathbf{v}} = 2, 3$ that
\begin{equation*}
\left \{\begin{aligned}
	& \mathcal{F}_{\frac{d_{\mathbf{v}}}{2}}(z) = z + o(1) \, ,  && \textnormal{as } z \to 0^{+} , \\
	& \mathcal{F}_{\frac{d_{\mathbf{v}}}{2}}(e^{\mu}) = \cfrac{(\frac{d_{\mathbf{v}}}{2}+1)^{\frac{d_{\mathbf{v}}}{2}}}{\Gamma(\frac{d_{\mathbf{v}}}{2}+1)} \, \cfrac{1+\mathcal{O}(\mu^{-2})}{1+\mathcal{O}(\mu^{-2})} \, ,  && \textnormal{as } \mu \to +\infty,
\end{aligned} \right.
\end{equation*}
and that
\begin{equation*}
\mathcal{B}_{\frac{d_{\mathbf{v}}}{2}}(z) = z + o(1) \, , \qquad \textnormal{as }z \to 0^{+}.
\end{equation*}
More precisely, the following expansions can be obtained:
\begin{equation*}
\left \{ \begin{aligned}
	&\mathcal{B}_{1}(z) = \cfrac{\log(1-z)^{2}}{\textnormal{Li}_{2}(1)} = \cfrac{6\log(1-z)^{2}}{\pi^{2}} \to +\infty \, , && \textnormal{as }z \to 1^{-}, \\
	&\mathcal{B}_{\frac{3}{2}}(z) = \cfrac{\textnormal{Li}_{\frac{3}{2}}(z)^{\frac{5}{2}}}{\textnormal{Li}_{\frac{5}{2}}(z)^{\frac{3}{2}}} \to \cfrac{\zeta(\frac{3}{2})^{\frac{5}{2}}}{\zeta(\frac{5}{2})^{\frac{3}{2}}} \, , && \textnormal{as } z \to 1^{-}.
\end{aligned} \right.
\end{equation*}

Finally, let us define the set $I_{\textnormal{eq}}$ as
\begin{equation} \label{def_Ieq}
I_{\textnormal{eq}} = \left\{
\begin{array}{ll}
]0,2[ \, , \qquad & \textnormal{for 2D Fermi-Dirac particles,} \\ \\
\left]0,\cfrac{5}{3}\sqrt{\cfrac{10}{\pi}}\right[ \, , \qquad & \textnormal{for 3D Fermi-Dirac particles,} \\ \\
]0,+\infty[ \, , \qquad & \textnormal{for 2D Bose-Einstein particles,} \\ \\
\left]0,\cfrac{\zeta(\frac{3}{2})^{\frac{5}{2}}}{\zeta(\frac{5}{2})^{\frac{3}{2}}}\right[ \, , \qquad & \textnormal{for 3D Bose-Einstein particles.}
\end{array}
\right.
\end{equation}
Then, we have shown that there exists a unique solution of \eqref{eq_fugacity} if and only if there holds
\[|\alpha|\,\rho^0\,\left(\cfrac{d_{\mathbf{v}}}{4\pi e^0}\right)^{\frac{d_{\mathbf{v}}}{2}} \in I_{\textnormal{eq}}.
\]
In such case, \eqref{eq_fugacity} can be solved numerically to get $z$, then $T$ thanks to \eqref{mom02_qmaxwell} and identify the limit state $f^{\infty}$ as $\mathcal{M}_{q}$. 
On the other hand, if
\[
	|\alpha|\,\rho^0\,\left(\cfrac{d_{\mathbf{v}}}{4\pi e^0}\right)^{\frac{d_{\mathbf{v}}}{2}} \notin I_{\textnormal{eq}},
\]
 we handle a \textit{quantum degenerative case} and the limit state $f^{\infty}$ cannot be identified as $\mathcal{M}_{q}$. Recent works  (see e.g. \cite{BriantEinav2013,Lu2014,EscobedoVelazquez2015}) indicates that, for the 3D degenerate Bose-Einstein case which describes a quantum condensation, the limit state $f^{\infty}$ is identified as
\begin{equation} \label{def_Mq_3d_bosons}
\widetilde{\mathcal{M}_{q}}(\mathbf{v}) = m_{0}\, \delta_{\mathbf{0}}(\mathbf{v}-\mathbf{u}^0) + \cfrac{1}{|\alpha|}\, \cfrac{1}{\exp\left (\frac{|\mathbf{v}-\mathbf{u}^0|^{2}}{2T}\right ) - 1} \, ,
\end{equation}
where $\delta_{\mathbf{0}}$ is the usual Dirac function, the critical mass $m_{0}$ and the temperature $T$ are linked to the density $\rho^0$ and the internal energy $e^0$ as
\begin{equation} \label{Tcritical_bosons_3d}
T =  \cfrac{2e^{0}\,\zeta(3/2)}{3\zeta(5/2)}\, ,
\end{equation}
and
\begin{equation} \label{m0critical_bosons_3d}
m_{0} = \rho^0 - \cfrac{(2\pi T)^{3/2}}{|\alpha|}\, \zeta(3/2) = \rho^0 - \cfrac{1}{|\alpha|}\,\left(\frac{4\pi e^0}{3}\right)^{3/2} \, \cfrac{\zeta(3/2)^{5/2}}{\zeta(5/2)^{3/2}} \, .
\end{equation}
We can remark here that
\[
|\alpha|\,\rho^0\,\left(\cfrac{d_{\mathbf{v}}}{4\pi e^0}\right)^{\frac{d_{\mathbf{v}}}{2}} \notin I_{\textnormal{eq}} \qquad \Longleftrightarrow \qquad m_{0} > 0 \, .
\]

Concerning Fermi-Dirac degenerative cases, we have to deal with the so-called ``maximum assumption'' 
\[\max f \leq |\alpha|^{-1}\] in order to define the entropy.  It was shown in \cite{escobedo2005entropy,allemand2010incompressible} that, in the critical case where $\rho^0$ and $e^0$ satisfy
\begin{equation} \label{critical_FD}
\cfrac{\rho^0}{|D(\mathbf{0},A)|} = \cfrac{1}{|\alpha|} \, , \qquad A = \sqrt{\cfrac{2\, e^0 \, (d_{\mathbf{v}}+2)}{d_{\mathbf{v}}}} \, ,
\end{equation}
the distribution function $f$ converges to the limit state $\chi$ defined  in \eqref{def_chi}. If the maximum assumption is broken, the entropy cannot be defined anymore as $\mathcal{H}[f]$ so nothing clear can be said about the limit state in this context.

\paragraph{Practical computations} Let us classify the limit states according to the initial parameters:

	 \noindent \textit{Mass $\rho^0$ and internal energy $e^0$ given.} We want to compute the fugacity $z$ and the temperature $T$. For this purpose, we test the assertion \[|\alpha|\,\rho^0\,\left(\cfrac{d_{\mathbf{v}}}{4\pi e^0}\right)^{\frac{d_{\mathbf{v}}}{2}} \in I_{\textnormal{eq}}.\] 
	\begin{itemize}
		\item If it holds, we solve numerically \eqref{eq_fugacity} with Brent's method \cite{brent1973algorithms} to get an approximation of $z$. When Fermi-Dirac particles are considered, we use the scale $z = e^{\mu}$ and run the iterative method to find $\mu$. This  allows to get large values of $z$ when $|\alpha|\,\rho^0\,\left(\frac{d_{\mathbf{v}}}{4\pi e^0}\right)^{\frac{d_{\mathbf{v}}}{2}}$ is close to the upper bound of $I_{\textnormal{eq}}$. The limit distribution is then set\footnote{Note that, the relaxation of $f$ is always entropic for Bose-Einstein particles but can be non-entropic for Fermi-Dirac particles} as $f^{\infty} = \mathcal{M}_{q}$.
		\item If it does not hold,  
		\begin{itemize}
			\item when 3D Bose-Einstein particles are considered, we use \eqref{Tcritical_bosons_3d}-\eqref{m0critical_bosons_3d} to set $T,m_{0}$ and we set $z = 1$ and the limit state is defined again as $f^{\infty} = \widetilde{\mathcal{M}_{q}}$.
			\item For 2D and 3D Fermi-Dirac cases, we test the equality \eqref{critical_FD}.
			\begin{itemize}
				\item If it holds, the limit state is identified as $f^{\infty} = \chi$ and the relaxation is entropic.
				\item If it does not hold, we cannot say anything about the behaviour of $f(t,\cdot)$ as $t \to +\infty$ since it is non-entropic.
			\end{itemize}
		\end{itemize}
	\end{itemize}

	\noindent \textit{Mass $\rho$ and temperature $T$ given.} We want to compute the fugacity $z$ and the internal energy $e^0$.
	\begin{itemize}
		\item If the 3D Bose-Einstein case is considered, we test the inequality \[|\alpha|\, \rho^0 \leq (2\pi T)^{\frac{3}{2}}\, \zeta(\frac{3}{2}).\] 
		\begin{itemize}
			\item If it is true, then we can solve numerically \eqref{eq_fugacity} with Brent's method to get an approximation of $z$. We use it to get $e^0$ from \eqref{mom2_qmaxwell} and set the limit distribution as $f^{\infty} = \mathcal{M}_{q}$.
			\item If it is false, then we use \eqref{Tcritical_bosons_3d}-\eqref{m0critical_bosons_3d} to set $T,m_{0}$ and we set $z = 1$ when Bose-Einstein particles are considered and the limit state is defined as $f^{\infty} = \widetilde{\mathcal{M}_{q}}$.
		\end{itemize}
		\item If 2D Bose-Einstein or Fermi-Dirac cases are considered, we can solve numerically \eqref{eq_fugacity} with Brent's method to get an approximation of $z$, then set the limit distribution as $f^{\infty} = \mathcal{M}_{q}$ and compute $e^{0}$ thanks to \eqref{mom2_qmaxwell}.
	\end{itemize}

\section{Fast deterministic approximations of the Boltzmann-Nordheim collision operator}
\label{sec:FBNEA}

Let us now introduce the numerical method that we shall study and use in this paper. This section is strongly inspired from \cite{FilbetHuJin:2012}. For the sake of simplicity, all the functions described in this section will only depend on the velocity variable, denoted by $\bm{\xi}$.

\subsection{The Carleman representation}

Let us decompose the BNE operator \eqref{eq:BNE_OP}  using the following bilinear and trilinear operators (see also Remark \ref{rem:Quadri2Tri}):
\begin{equation}
\label{eq:subBNE_OP}
\begin{split}
\mathcal{Q}^+_{c}(F,G)(\bm{\xi}) &= \int_{\R^{d_{\mathbf{v}}}}\int_{\Sp^{d_{\mathbf{v}}-1}} B\left(|\bm{\xi}-\bm{\xi}_{*}|,\theta\right)\, F'\,G_{*}' \, d\bm{\sigma}\,d\bm{\xi}_{*} \, , \\
\mathcal{Q}^-_{c}(F,G)(\bm{\xi}) &= F(\bm{\xi}) \int_{\R^{d_{\mathbf{v}}}}\int_{\Sp^{d_{\mathbf{v}}-1}} B\left(|\bm{\xi}-\bm{\xi}_{*}|,\theta\right) G_{*} \, d\bm{\sigma}\,d\bm{\xi}_{*} \, , \\
\mathcal{Q}_{1,q}(F,G,H)(\bm{\xi}) &= \int_{\R^{d_{\mathbf{v}}}}\int_{\Sp^{d_{\mathbf{v}}-1}} B\left(|\bm{\xi}-\bm{\xi}_{*}|,\theta\right)\, F' \,G_{*}' \, H_{*} \, d\bm{\sigma}\,d\bm{\xi}_{*} \, , \\
\mathcal{Q}_{2,q}(F,G,H)(\bm{\xi}) &= F(\bm{\xi}) \int_{\R^{d_{\mathbf{v}}}}\int_{\Sp^{d_{\mathbf{v}}-1}} B\left(|\bm{\xi}-\bm{\xi}_{*}|,\theta\right)\, G'\,H_{*}'  \, d\bm{\sigma}\,d\bm{\xi}_{*} \, , \\
\mathcal{Q}_{3,q}(F,G,H)(\bm{\xi}) &= F(\bm{\xi}) \int_{\R^{d_{\mathbf{v}}}}\int_{\Sp^{d_{\mathbf{v}}-1}}B\left(|\bm{\xi}-\bm{\xi}_{*}|,\theta\right)\, G_{*} \, H' \, d\bm{\sigma}\,d\bm{\xi}_{*} \, , \\
\mathcal{Q}_{4,q}(F,G,H)(\bm{\xi}) &= F(\bm{\xi})  \int_{\R^{d_{\mathbf{v}}}}\int_{\Sp^{d_{\mathbf{v}}-1}} B\left(|\bm{\xi}-\bm{\xi}_{*}|,\theta\right)\, G_{*} \, H_{*}' \, d\bm{\sigma}\,d\bm{\xi}_{*} \, .
\end{split}
\end{equation}
Hence the complete collision operator $\mathcal{Q}(F,F)$ writes as
\begin{equation*}
\mathcal{Q}_{\alpha}(F) = \mathcal{Q}^+_{c}(F,F)-\mathcal{Q}^-_{c}(F,F) - \alpha\,\left[\mathcal{Q}_{1,q}(F,F,F)+\mathcal{Q}_{2,q}(F,F,F) - \mathcal{Q}_{3,q}(F,F,F)-\mathcal{Q}_{4,q}(F,F,F)\right] \ .
\end{equation*}
\begin{remark}
	\label{rem:computComplexity}
	One can notice in \eqref{eq:subBNE_OP} that the only term that consists on a full integral on the three distributions $F$, $G$ and $H$ is $\mathcal{Q}_{1,q}$. This term will then logically be the most computationally expensive.
\end{remark}

By completing the square, one has the following useful lemma from \cite{MoPa:2006}:

\begin{lemma}\label{inject_Dirac_lemma}
For any function $F:\R^{d_{\mathbf{v}}} \to \R^{n}$, we have
\begin{equation*}
\cfrac{1}{2} \, \int_{\Sp^{d_{\mathbf{v}}-1}} F\left(|\mathbf{w}|\,\bm{\sigma} - \mathbf{w}\right) \, d\bm{\sigma} = \cfrac{1}{|\mathbf{w}|^{d_{\mathbf{v}}-2}} \, \int_{\R^{d_{\mathbf{v}}}} \delta\left(2\mathbf{x}\cdot\mathbf{w}+|\mathbf{x}|^{2}\right) \, F(\mathbf{x})\, d\mathbf{x} \, .
\end{equation*}
\end{lemma}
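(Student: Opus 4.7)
\smallskip

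The plan is to recognize the identity as a reformulation of a surface integral over a sphere as a distributional integral against a delta function, following the classical Carleman-like trick. I would proceed in three main steps.

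First, I would perform the change of variable $\mathbf{x} = |\mathbf{w}|\,\bm{\sigma} - \mathbf{w}$, observing that as $\bm{\sigma}$ ranges over $\Sp^{d_{\mathbf{v}}-1}$, the image point $\mathbf{x}$ ranges over the sphere
\[
\Sigma_{\mathbf{w}} = \bigl\{\mathbf{x} \in \R^{d_{\mathbf{v}}} \ :\ |\mathbf{x}+\mathbf{w}| = |\mathbf{w}|\bigr\} = \bigl\{\mathbf{x} \in \R^{d_{\mathbf{v}}} \ :\ 2\mathbf{x}\cdot\mathbf{w}+|\mathbf{x}|^{2} = 0\bigr\},
\]
i.e.\ the sphere of radius $|\mathbf{w}|$ centered at $-\mathbf{w}$. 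Since this change of variables is an affine isometry composed with a dilation by $|\mathbf{w}|$, the induced surface measure satisfies $dS(\mathbf{x}) = |\mathbf{w}|^{d_{\mathbf{v}}-1}\, d\bm{\sigma}$, giving
\[
\int_{\Sp^{d_{\mathbf{v}}-1}} F\bigl(|\mathbf{w}|\,\bm{\sigma}-\mathbf{w}\bigr)\,d\bm{\sigma}
= \frac{1}{|\mathbf{w}|^{d_{\mathbf{v}}-1}}\,\int_{\Sigma_{\mathbf{w}}} F(\mathbf{x})\,dS(\mathbf{x}).
\]

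Second, I would rewrite the surface integral as a volume integral against a Dirac mass using the coarea formula. Setting $\phi(\mathbf{x}) = 2\mathbf{x}\cdot\mathbf{w}+|\mathbf{x}|^{2}$, so that $\Sigma_{\mathbf{w}} = \{\phi = 0\}$, one computes $\nabla\phi(\mathbf{x}) = 2(\mathbf{w}+\mathbf{x})$, whose norm on $\Sigma_{\mathbf{w}}$ is precisely $2|\mathbf{w}|$ (since $|\mathbf{w}+\mathbf{x}|=|\mathbf{w}|$ there). The standard identity for the pushforward of Lebesgue measure by $\phi$ then gives
\[
\int_{\R^{d_{\mathbf{v}}}} \delta\bigl(2\mathbf{x}\cdot\mathbf{w}+|\mathbf{x}|^{2}\bigr)\,F(\mathbf{x})\,d\mathbf{x}
= \int_{\Sigma_{\mathbf{w}}} \frac{F(\mathbf{x})}{|\nabla\phi(\mathbf{x})|}\,dS(\mathbf{x})
= \frac{1}{2|\mathbf{w}|}\int_{\Sigma_{\mathbf{w}}} F(\mathbf{x})\,dS(\mathbf{x}).
\]

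Third, I would combine the two identities. Substituting the second into the first yields
\[
\int_{\Sp^{d_{\mathbf{v}}-1}} F\bigl(|\mathbf{w}|\,\bm{\sigma}-\mathbf{w}\bigr)\,d\bm{\sigma}
= \frac{2|\mathbf{w}|}{|\mathbf{w}|^{d_{\mathbf{v}}-1}}\,\int_{\R^{d_{\mathbf{v}}}} \delta\bigl(2\mathbf{x}\cdot\mathbf{w}+|\mathbf{x}|^{2}\bigr)\,F(\mathbf{x})\,d\mathbf{x}
= \frac{2}{|\mathbf{w}|^{d_{\mathbf{v}}-2}}\,\int_{\R^{d_{\mathbf{v}}}} \delta\bigl(2\mathbf{x}\cdot\mathbf{w}+|\mathbf{x}|^{2}\bigr)\,F(\mathbf{x})\,d\mathbf{x},
\]
and dividing by $2$ yields the claim. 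The only delicate step is the second one: the coarea-type identity must be justified (either by the implicit function theorem and a standard mollification of $\delta$, or by invoking the distributional pullback of $\delta$ by a submersion), and one should note that $\mathbf{x}=\mathbf{0}$ is a single point on $\Sigma_{\mathbf{w}}$ where $F$ is assumed integrable, which is harmless in both the surface and distributional senses. Apart from this, everything reduces to bookkeeping of Jacobians.
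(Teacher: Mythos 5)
Your proof is correct, and it is the standard argument behind this lemma, which the paper does not actually prove but quotes from \cite{MoPa:2006} with the one-line hint ``by completing the square'': your identification of the delta's support via $2\mathbf{x}\cdot\mathbf{w}+|\mathbf{x}|^{2}=|\mathbf{x}+\mathbf{w}|^{2}-|\mathbf{w}|^{2}$ is exactly that step, and your coarea-formula computation (with $|\nabla\phi|=2|\mathbf{w}|$ on the sphere) is equivalent to the polar-coordinate change of variables $\mathbf{x}=-\mathbf{w}+r\bm{\sigma}$ combined with the one-dimensional scaling rule $\delta(r^{2}-|\mathbf{w}|^{2})=\delta(r-|\mathbf{w}|)/(2|\mathbf{w}|)$ for $r>0$ used in the cited reference, so the two routes differ only in bookkeeping. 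One minor point: your caveat about $\mathbf{x}=\mathbf{0}$ is unnecessary, since the gradient $2(\mathbf{x}+\mathbf{w})$ vanishes only at the center $-\mathbf{w}$, which does not lie on the zero set, so the pullback of $\delta$ is unproblematic at every point of the sphere, the origin included.
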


Thanks to this lemma, one has the following Carleman-like representation for the classical collision operator:
\begin{multline*}
\mathcal{Q}^+_{c}(F,G)(\bm{\xi}) - \mathcal{Q}^-_{c}(F,G)(\bm{\xi}) 
= \\ \int_{\R^{2d_{\mathbf{v}}}} \tilde{B}(\mathbf{z},\mathbf{y}) \, \delta(\mathbf{z}\cdot\mathbf{y}) \, \left[ F(\bm{\xi} + \mathbf{z})\, G(\bm{\xi}+\mathbf{y}) - F(\bm{\xi})\,G(\bm{\xi}+\mathbf{z}+\mathbf{y})\right] \, d\mathbf{z}\, d\mathbf{y} \, ,
\end{multline*}
with $\tilde{B}$ defined as
\begin{equation*}
\tilde{B}(\mathbf{z},\mathbf{y}) = 2^{d_{\mathbf{v}}-1}\, C_{\Phi}\,(|\mathbf{z}|^{2}+|\mathbf{y}|^{2})^{\frac{\gamma-d_{\mathbf{v}}+2}{2}} \, b\left(\cfrac{\left|\mathbf{y}\right|}{\sqrt{\left|\mathbf{y}\right|^{2}+\left|\mathbf{z}\right|^{2}}} \, , \, \cfrac{\left|\mathbf{z}\right|}{\sqrt{\left|\mathbf{y}\right|^{2}+\left|\mathbf{z}\right|^{2}}} \right) \, .
\end{equation*}
Similar computations lead to the following Carleman-like representations for the quantum, trilinear collision operators (with the same effect on computational complexity as in Remark \ref{rem:computComplexity}, namely that $\mathcal{Q}_{1,q}$ will be the more computationally intensive operator)
\begin{align*}
\mathcal{Q}_{1,q}(F,G,H)(\bm{\xi}) &= \int_{\R^{2d_{\mathbf{v}}}} \tilde{B}(\mathbf{z},\mathbf{y}) \, \delta(\mathbf{z}\cdot\mathbf{y}) \, F(\bm{\xi} + \mathbf{z})\, G(\bm{\xi}+\mathbf{y})\, H(\bm{\xi}+\mathbf{z}+\mathbf{y})\, d\mathbf{z}\,d\mathbf{y} \, ,
\\
\mathcal{Q}_{2,q}(F,G,H)(\bm{\xi}) &= F(\bm{\xi})\int_{\R^{2d_{\mathbf{v}}}} \tilde{B}(\mathbf{z},\mathbf{y}) \, \delta(\mathbf{z}\cdot\mathbf{y}) \, G(\bm{\xi} + \mathbf{z})\, H(\bm{\xi}+\mathbf{y})\, d\mathbf{z}\,d\mathbf{y} \, ,
\\
\mathcal{Q}_{3,q}(F,G,H)(\bm{\xi}) &= F(\bm{\xi})\int_{\R^{2d_{\mathbf{v}}}} \tilde{B}(\mathbf{z},\mathbf{y}) \, \delta(\mathbf{z}\cdot\mathbf{y}) \, G(\bm{\xi}+\mathbf{z}+\mathbf{y}) \, H(\bm{\xi} + \mathbf{z})\, d\mathbf{z}\,d\mathbf{y} \, ,
\\
\mathcal{Q}_{4,q}(F,G,G)(\bm{\xi}) &= F(\bm{\xi}) \int_{\R^{2d_{\mathbf{v}}}} \tilde{B}(\mathbf{z},\mathbf{y}) \, \delta(\mathbf{z}\cdot\mathbf{y}) \, G(\mathbf{y}) \, H(\bm{\xi} + \mathbf{y})\, d\mathbf{z}\,d\mathbf{y} \, .
\end{align*}

	\subsection{Truncation and periodization of the BNE operator}

In order to develop a fast spectral approximation for the Boltzmann-Nordheim operator, one has first to truncate it, in order to make it deal with function with finite support. We shall follow along the lines of \cite{FilbetHuJin:2012}.

\begin{theorem} \label{truncation}
	Let the function $G$ be compactly supported in the $d_{\mathbf{v}}$-dimensional ball $B(0,S)$. Then, the support of $\bm{\xi} \mapsto \mathcal{Q}_{\alpha}(G)(\bm{\xi})$ is included in $B(0,\sqrt{2}S)$.
\end{theorem}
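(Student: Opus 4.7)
The plan is to exploit the conservation laws built into the parametrization \eqref{changevar_sigma}: namely, for every $(\mathbf{v}_*,\bm{\sigma})$ one has
\[
\mathbf{v}'+\mathbf{v}_*' = \mathbf{v}+\mathbf{v}_*, \qquad |\mathbf{v}'|^{2}+|\mathbf{v}_*'|^{2} = |\mathbf{v}|^{2}+|\mathbf{v}_*|^{2}.
\]
The energy identity is the essential ingredient, because the support constraint $|\mathbf{v}'|,|\mathbf{v}_*'|\le S$ puts a ceiling of $2S^{2}$ on $|\mathbf{v}'|^{2}+|\mathbf{v}_*'|^{2}$, and hence (since $|\mathbf{v}_*|^{2}\ge 0$) on $|\mathbf{v}|^{2}$.

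I would fix $\bm{\xi}$ with $|\bm{\xi}|>\sqrt{2}\,S$ and show that the integrand of $\mathcal{Q}_{\alpha}(G)(\bm{\xi})$ vanishes identically in $(\mathbf{v}_*,\bm{\sigma})$. Expanding the bracket in \eqref{eq:BNE_OP} gives six monomials in $G,G_*,G',G_*'$: the two quadratic terms $G'G_*'$ and $G G_*$, plus four cubic terms each containing a factor of $\alpha$. First, since $\sqrt{2}\,S>S$, one has $G(\bm{\xi})=0$, which immediately kills the four monomials that contain the factor $G=G(\bm{\xi})$, namely $G G_*$, $G G' G_*'$, $G G_* G'$, and $G G_* G_*'$. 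It remains to treat the two monomials $G' G_*'$ and $G' G_*' G_*$, neither of which involves $G(\bm{\xi})$.

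For these, I would argue by contradiction: assume that $G(\mathbf{v}')G(\mathbf{v}_*')\neq 0$ for some $(\mathbf{v}_*,\bm{\sigma})$. Then necessarily $|\mathbf{v}'|\le S$ and $|\mathbf{v}_*'|\le S$, so that $|\mathbf{v}'|^{2}+|\mathbf{v}_*'|^{2}\le 2S^{2}$. Energy conservation then yields
\[
|\bm{\xi}|^{2} \;\le\; |\bm{\xi}|^{2}+|\mathbf{v}_*|^{2} \;=\; |\mathbf{v}'|^{2}+|\mathbf{v}_*'|^{2} \;\le\; 2S^{2},
\]
contradicting $|\bm{\xi}|>\sqrt{2}\,S$. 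Hence $G'G_*'\equiv 0$ on the integration domain, which also kills $G'G_*'G_*$. Combining the two cases shows that the bracketed expression in \eqref{eq:BNE_OP} vanishes pointwise in $(\mathbf{v}_*,\bm{\sigma})$, so $\mathcal{Q}_{\alpha}(G)(\bm{\xi})=0$ for every $|\bm{\xi}|>\sqrt{2}\,S$, as desired.

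There is essentially no obstacle: the result is purely kinematic and depends only on the microscopic conservation of momentum and kinetic energy, together with the trivial observation that $\sqrt{2}\,S>S$ so that $G(\bm{\xi})$ already vanishes. The only place where one must be slightly careful is to treat all six monomials in the expansion of the Boltzmann–Nordheim bracket (rather than just the two classical ones), but each of them falls into one of the two categories handled above. The bound $\sqrt{2}\,S$ is also optimal, as it is saturated by the equal–mass head-on collision $\mathbf{v}_*=0$, $|\mathbf{v}'|=|\mathbf{v}_*'|=S$.
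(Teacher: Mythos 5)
Your proof is correct, and it rests on exactly the same kinematic fact as the paper's --- conservation of kinetic energy forces $|\bm{\xi}|^{2}\le |\mathbf{v}'|^{2}+|\mathbf{v}_*'|^{2}\le 2S^{2}$ whenever the gain factors are active --- but you run the argument in a different representation. The paper proves the theorem in the Carleman variables coming from Lemma \ref{inject_Dirac_lemma}: there the four velocities are $\bm{\xi}$, $\bm{\xi}+\mathbf{z}$, $\bm{\xi}+\mathbf{y}$, $\bm{\xi}+\mathbf{z}+\mathbf{y}$ with $\mathbf{z}\cdot\mathbf{y}=0$, and energy conservation becomes the Pythagorean identity $|\bm{\xi}|^{2}+|\bm{\xi}+\mathbf{z}+\mathbf{y}|^{2}=|\bm{\xi}+\mathbf{z}|^{2}+|\bm{\xi}+\mathbf{y}|^{2}$. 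You instead work directly from the $\bm{\sigma}$-representation \eqref{eq:BNE_OP} using the standard collision invariants, which makes the proof self-contained (no change of variables needed). Your case analysis is also tighter than the paper's write-up: the paper phrases the key step as ``if $\bm{\xi},\bm{\xi}+\mathbf{z},\bm{\xi}+\mathbf{y},\bm{\xi}+\mathbf{z}+\mathbf{y}\in B(0,S)$, it is possible to have a nonzero integrand,'' which glosses over the configuration in which only the two primed velocities lie in $B(0,S)$ while $\bm{\xi}$ does not (the pure gain monomial $G'G_*'$ survives there); your splitting of the six monomials into those containing $G(\bm{\xi})$ (killed because $|\bm{\xi}|>\sqrt{2}\,S>S$) and those containing $G'G_*'$ (killed by energy conservation) covers every case explicitly. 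What the paper's choice buys in exchange is alignment with what follows: the Carleman form is the one used to define the truncated operator $\mathcal{Q}_{\alpha}^{R}$ and the spectral discretization, so stating the support estimate in those variables is what is actually invoked later in the construction.
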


\begin{proof}
Considering $\mathbf{z},\mathbf{y} \in \R^{d_{\mathbf{v}}}$ such that $\mathbf{z} \cdot \mathbf{y} = 0$, we have
\begin{displaymath}
|\bm{\xi}|^{2} + |\bm{\xi}+\mathbf{z}+\mathbf{y}|^{2} = |\bm{\xi}+\mathbf{z}|^{2} + |\bm{\xi}+\mathbf{y}|^{2} \, .
\end{displaymath}
Hence, if $\bm{\xi},\bm{\xi}+\mathbf{z},\bm{\xi}+\mathbf{y},\bm{\xi}+\mathbf{z}+\mathbf{y} \in B(0,S)$, it is possible to have
\begin{equation*}
G(\bm{\xi}+\mathbf{z})\, G(\bm{\xi}+\mathbf{y}) \, \left[ 1 - \alpha \left( G(\bm{\xi})+G(\bm{\xi}+\mathbf{z}+\mathbf{y})\right) \right] 
- G(\bm{\xi}) \, G(\bm{\xi}+\mathbf{z}+\mathbf{y}) \left[ 1-\alpha \left( G(\bm{\xi}+\mathbf{z}) + G(\bm{\xi}+\mathbf{y}) \right) \right] \neq 0 \, .
\end{equation*}
In this situation, we have
\begin{displaymath}
|\bm{\xi}|^{2} \leq |\bm{\xi}|^{2} + |\bm{\xi}+\mathbf{z}+\mathbf{y}|^{2} \leq 2S^{2} \, ,
\end{displaymath}
so we can conclude that the support of $\mathcal{Q}_{\alpha}(G)$ is included in the ball $B\left(0,\sqrt{2}S\right)$ if the support of $G$ is in $B(0,S)$ in $\bm{\xi}$.
\end{proof}

We now define a \textit{truncated collision operator} $\mathcal{Q}_{\alpha}^{R}$ with $R > 0$ as follows: given a function $G:\R^{d_{\mathbf{v}}}\to \R$ with compact support in $B(0,S)$, we define $\tilde{G}:[-\pi,\pi]^{d_{\mathbf{v}}} \to \R$ such that
\begin{equation*}
\tilde{G}(\bm{\xi}) = \left\{
\begin{array}{ll}
G(\bm{\xi}) \, , & \textnormal{if $\bm{\xi} \in B(0,S)$,} \\
0 \, , & \textnormal{else.}
\end{array}
\right.
\end{equation*}
At this point, we have several candidates for $\mathcal{Q}_{\alpha}^{R}$, each one of them being written under the following generic way
\begin{multline*}
\mathcal{Q}_{\alpha}^{R}(G)(\bm{\xi}) = \int_{\R^{2d_{\mathbf{v}}}} \tilde{B}(\mathbf{x},\mathbf{y}) \, \delta(\mathbf{x}\cdot\mathbf{y}) \, \Big[ \tilde{G}(\bm{\xi} + \mathbf{x})\, \tilde{G}(\bm{\xi}+\mathbf{y})\,\left(1- \alpha \, \left(\tilde{G}(\bm{\xi}) + \tilde{G}(\bm{\xi}+\mathbf{x}+\mathbf{y})\right)\right) \\
 - \tilde{G}(\bm{\xi})\,\tilde{G}(\bm{\xi}+\mathbf{x}+\mathbf{y})\,\left(1-\alpha\, \left(\tilde{G}(\bm{\xi} + \mathbf{x}) + \tilde{G}(\bm{\xi} + \mathbf{y})\right) \right) \Big]  \mathcal{I}(\mathbf{x},\mathbf{y},\bm{\xi})\, d\mathbf{x}\,d\mathbf{y} \, ,
\end{multline*}
where $\mathcal{I}$ is defined as follows:
\begin{itemize}
\item the classical spectral method from \cite{PP96}
\begin{equation*}
\mathcal{I}(\mathbf{x},\mathbf{y},\bm{\xi}) = \mathbf{1}_{B(0,S)}(\bm{\xi}+\mathbf{x}+\mathbf{y})\, \mathbf{1}_{B(0,2S)}(\mathbf{x}-\mathbf{y})\, \mathbf{1}_{B(0,\sqrt{2}S)}(\bm{\xi}) \, ,
\end{equation*}
\item the fast spectral method from \cite{MoPa:2006}
\begin{equation*}
\mathcal{I}(\mathbf{x},\mathbf{y},\bm{\xi}) = \mathbf{1}_{B(0,2R)}(\mathbf{x}-\mathbf{y})\, \mathbf{1}_{B(0,\sqrt{2}S)}(\bm{\xi}) \, , \qquad R \geq S \, , 
\end{equation*}
\item the fast quantum Boltzmann spectral method from \cite{FilbetHuJin:2012}:
\begin{equation}
\label{eq:IndicatorFilbetHuJin}
\mathcal{I}(\mathbf{x},\mathbf{y},\bm{\xi}) = \mathbf{1}_{B(0,R)}(\mathbf{x})\, \mathbf{1}_{B(0,R)}(\mathbf{y})\, \mathbf{1}_{B(0,\sqrt{2}S)}(\bm{\xi}) \, , \qquad R \geq S \, ,
\end{equation}
\end{itemize}
In order to simplify the computations of the following section, we consider the definition \eqref{eq:IndicatorFilbetHuJin} for $\mathcal{I}$. It gives the following expression of $\mathcal{Q}_{\alpha}^{R}(G)$:
\begin{multline*}
\mathcal{Q}_{\alpha}^{R}(G)(\bm{\xi})
= \int_{B(0,R)^{2}} \tilde{B}(\mathbf{x},\mathbf{y}) \, \delta(\mathbf{x}\cdot\mathbf{y}) \, \Big[ \tilde{G}(\bm{\xi} + \mathbf{x})\, \tilde{G}(\bm{\xi}+\mathbf{y})\,\left(1- \alpha \, \left(\tilde{G}(\bm{\xi}) + \tilde{G}(\bm{\xi}+\mathbf{x}+\mathbf{y})\right)\right) \\
 - \tilde{G}(\bm{\xi})\,\tilde{G}(\bm{\xi}+\mathbf{x}+\mathbf{y})\,\left(1-\alpha\, \left(\tilde{G}(\bm{\xi} + \mathbf{x}) + \tilde{G}(\bm{\xi} + \mathbf{y})\right) \right) \Big] \, d\mathbf{x}\,d\mathbf{y} \, .
\end{multline*}

We now define the periodized function $\tilde{\tilde{G}}$ as 
\begin{equation*}
\tilde{\tilde{G}}(\bm{\xi}+2\mathbf{k}\pi) = \tilde{G}(\bm{\xi}) \, , \qquad \forall\,\bm{\xi} \in [-\pi,\pi]^{d_{\mathbf{v}}} \, , \quad \forall\,\mathbf{k} \in \Z^{d_{\mathbf{v}}}\, .
\end{equation*}
Choosing $R > 0$ and $S > 0$ such that $R \geq S$ and $S$ is small enough allows to write
\begin{equation*}
\mathcal{Q}_{\alpha}^{R}(\tilde{\tilde{G}})(\bm{\xi}) = \mathcal{Q}_{\alpha}(\tilde{\tilde{G}})(\bm{\xi}) \, \quad \forall \bm{\xi} \in [-\pi,\pi]^{d_{\mathbf{v}}}.
\end{equation*}

More precisely, since $G$ is supported in $B(0,S)$, $\tilde{\tilde{G}}$ is supported on the balls $B(2\mathbf{k}\pi,S)$ for any $\mathbf{k} \in \Z^{d_{\mathbf{v}}}$. Consequently, if a particle is equipped with a velocity $\bm{\xi} \in B(2\mathbf{k}\pi,S)$, it may interact with any particle equipped with a velocity $\bm{\xi}_{*} \in B\left(2\mathbf{k}\pi,2R+\sqrt{2}S\right)$. Such candidate particle should not be in any $B(2\mathbf{l}\pi,S)$ for all $\mathbf{l} \in \Z^{d_{\mathbf{v}}}$ for avoiding artificial interactions. In order to guarantee this constraint, we must impose $2\pi-S \geq 2R+\sqrt{2}S$, namely
\begin{equation}\label{anti-aliasing}
 S \leq \frac{2 \pi}{2R+(1+\sqrt{2})} \, .
\end{equation}
This condition is generally referred as the \textit{anti-aliasing} condition on $S$ and $R$.

\subsection{Fast spectral approximation of the BNE operator}
 For now on, us assume that $\textnormal{Supp}_{\bm{\xi}}G \subset B(0,S) \subset [-\pi,\pi]^{d_{\mathbf{v}}}$ and consider a truncation radius $R > 0$ satisfying $R \geq 2S$ and \eqref{anti-aliasing}. Hence
\begin{equation*}
\mathcal{Q}_{\alpha}(G)(\mathbf{v}) = \mathcal{Q}_{\alpha}^{R}(G)(\mathbf{v}) \, .
\end{equation*}
Given a function $f\in L^2_{per}[-\pi,\pi]^{d_\mathbf{v}})$,  we define its Fourier series representation as
\begin{equation}
f(\mathbf v) = \sum_{\mathbf k \in\ZZ^{d_\mathbf{v}}}\widehat f_\mathbf{k} e^{i \mathbf{k} \cdot \mathbf{v}}, \qquad \widehat f_\mathbf{k} = \frac{1}{(2\pi)^{d_\mathbf{v}}}\int_{[-\pi,\pi]^{d_\mathbf{v}}} f(\mathbf v)
e^{-i \mathbf k \cdot \mathbf v }\,d\mathbf v,
\label{eq:FS}
\end{equation}
where we used the multi-index notation $\mathbf k=(k_1,\ldots,k_{d_\mathbf{v}})$ to denote the ${d_\mathbf{v}}$-dimensional sums over the indexes $k_j$, $j=1,\ldots,{d_\mathbf{v}}$.

\indent We introduce the truncated terms $\mathcal{Q}_{1,c}^{R}, \mathcal{Q}_{2,c}^{R}, \mathcal{Q}_{1,q}^{R}, \mathcal{Q}_{2,q}^{R}, \mathcal{Q}_{3,q}^{R}, \mathcal{Q}_{4,q}^{R}$ as

\begin{align*}
\mathcal{Q}_{1,c}^{R}&(F,G)(\bm{\xi}) = \int_{B(0,R)^{2}} \tilde{B}(\mathbf{z},\mathbf{y}) \, \delta(\mathbf{z}\cdot\mathbf{y}) \, F(\bm{\xi} + \mathbf{z})\, G(\bm{\xi}+\mathbf{y})\, d\mathbf{z}\,d\mathbf{y} \, , \\
\mathcal{Q}_{2,c}^{R}&(F,G)(\bm{\xi}) = F(\bm{\xi})\int_{B(0,R)^{2}} \tilde{B}(\mathbf{z},\mathbf{y}) \, \delta(\mathbf{z}\cdot\mathbf{y}) \,  G(\bm{\xi}+\mathbf{z}+\mathbf{y})\, d\mathbf{z}\,d\mathbf{y} \, , \\
\mathcal{Q}_{1,q}^{R}&(F,G,H)(\bm{\xi}) = \int_{B(0,R)^{2}} \tilde{B}(\mathbf{z},\mathbf{y}) \, \delta(\mathbf{z}\cdot\mathbf{y}) \, F(\bm{\xi} + \mathbf{z})\, G(\bm{\xi}+\mathbf{y})\, H(\bm{\xi}+\mathbf{z}+\mathbf{y})\, d\mathbf{z}\,d\mathbf{y} \, , \\
\mathcal{Q}_{2,q}^{R}&(F,G,H)(\bm{\xi}) = F(\bm{\xi}) \int_{B(0,R)^{2}} \tilde{B}(\mathbf{z},\mathbf{y}) \, \delta(\mathbf{z}\cdot\mathbf{y}) \, G(\bm{\xi} + \mathbf{z})\, H(\bm{\xi}+\mathbf{y})\,d\mathbf{z}\,d\mathbf{y} \, , \\
\mathcal{Q}_{3,q}^{R}&(F,G,H)(\bm{\xi}) = F(\bm{\xi}) \int_{B(0,R)^{2}} \tilde{B}(\mathbf{z},\mathbf{y}) \, \delta(\mathbf{z}\cdot\mathbf{y}) \,  G(\bm{\xi}+\mathbf{z}+\mathbf{y}) \, H(\bm{\xi} + \mathbf{z})\, d\mathbf{z}\,d\mathbf{y} \, , \\
\mathcal{Q}_{4,q}^{R}&(F,G,H)(\bm{\xi}) = F(\bm{\xi}) \int_{B(0,R)^{2}} \tilde{B}(\mathbf{z},\mathbf{y}) \, \delta(\mathbf{z}\cdot\mathbf{y}) \, G(\mathbf{y}) \, H(\bm{\xi} + \mathbf{y})\, d\mathbf{z}\,d\mathbf{y} \, .
\end{align*}

Using orthogonality properties of orthogonal polynomials, one has the following expansion of the quantum kernel modes.
\begin{lemma}
For any $N \in \NN^{*}$ and any multi-index $\mathbf{n}$, one has the following representation of the Fourier modes of the terms of the BNE collision operator:
\begin{align*}
\widehat{\mathcal{Q}_{1,c}^{R}(F,G)}_{\mathbf{n}} &= \sum_{\substack{\mathbf{k},\mathbf{l} \, \in \, I_{N} \\ \mathbf{k}+\mathbf{l} \,=\, \mathbf{n}}} \beta^{R}(\mathbf{k},\mathbf{l}) \, \widehat{F}_{\mathbf{k}} \, \widehat{G}_{\mathbf{l}} \, , \\
\widehat{\mathcal{Q}_{2,c}^{R}(F, G)}_{\mathbf{n}} &= \sum_{\substack{\mathbf{k},\mathbf{l} \, \in \, I_{N} \\ \mathbf{k}+\mathbf{l} \,=\, \mathbf{n}}}  \beta^{R}(\mathbf{l},\mathbf{l}) \, \widehat{F}_{\mathbf{k}} \, \widehat{G}_{\mathbf{l}} \, , \\
\widehat{\mathcal{Q}_{1,q}^{R}(F,G,H)}_{\mathbf{n}} &= \sum_{\substack{\mathbf{k},\mathbf{l},\mathbf{m} \, \in \, I_{N} \\ \mathbf{k}+\mathbf{l}+\mathbf{m} \,=\, \mathbf{n}}} \beta^{R}(\mathbf{k}+\mathbf{m},\mathbf{l}+\mathbf{m}) \, \widehat{F}_{\mathbf{k}} \, \widehat{G}_{\mathbf{l}}\,\widehat{H}_{\mathbf{m}} \, , \\
\widehat{\mathcal{Q}_{2,q}^{R}(F,G,H)}_{\mathbf{n}} &= \sum_{\substack{\mathbf{k},\mathbf{l},\mathbf{m} \, \in \, I_{N} \\ \mathbf{k}+\mathbf{l}+\mathbf{m} \,=\, \mathbf{n}}} \beta^{R}(\mathbf{k},\mathbf{l}) \, \widehat{F}_{\mathbf{k}} \, \widehat{G}_{\mathbf{l}}\,\widehat{H}_{\mathbf{m}} \, , \\
\widehat{\mathcal{Q}_{3,q}^{R}(F,G,H)}_{\mathbf{n}} &= \sum_{\substack{\mathbf{k},\mathbf{l},\mathbf{m} \, \in \, I_{N} \\ \mathbf{k}+\mathbf{l}+\mathbf{m} \,=\, \mathbf{n}}} \beta^{R}(\mathbf{k}+\mathbf{l},\mathbf{l}) \, \widehat{F}_{\mathbf{k}} \, \widehat{G}_{\mathbf{l}}\,\widehat{H}_{\mathbf{m}} \, , \\
\widehat{\mathcal{Q}_{4,q}^{R}(F,G,H)}_{\mathbf{n}} &= \sum_{\substack{\mathbf{k},\mathbf{l},\mathbf{m} \, \in \, I_{N} \\ \mathbf{k}+\mathbf{l}+\mathbf{m} \,=\, \mathbf{n}}} \beta^{R}(\mathbf{l},\mathbf{k}+\mathbf{l}) \, \widehat{F}_{\mathbf{k}} \, \widehat{G}_{\mathbf{l}}\,\widehat{H}_{\mathbf{m}} \, ,
\end{align*}
with the so-called \emph{kernel modes} $\beta^{R}(\mathbf{k},\mathbf{l})$ being defined as
\begin{equation*}
\beta^{R}(\mathbf{k},\mathbf{l}) = \int_{B(0,R)^{2}} \tilde{B}(\mathbf{z},\mathbf{y}) \, \delta(\mathbf{z}\cdot\mathbf{y})\,e^{i\,(\mathbf{z}\cdot\mathbf{k}+\mathbf{y}\cdot\mathbf{l})} \, d\mathbf{z}\,d\mathbf{y} \, ,
\end{equation*}
and the set $I_N$ given by
\begin{equation*}
I_{{N}} = \prod_{d\,=\,1}^{d_{\mathbf{v}}} \left\{ -\cfrac{N}{2},\ldots,\cfrac{N}{2}-1 \right\} \subset \Z^{d_{\mathbf{v}}} \, .
\end{equation*}
\end{lemma}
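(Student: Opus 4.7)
The plan is to compute each Fourier coefficient directly by substituting the Fourier series of $F$, $G$, $H$ into the truncated operators and tracking how each shifted factor contributes to the integrand. Since every integrand is a product of a kernel $\tilde{B}(\mathbf{z},\mathbf{y})\delta(\mathbf{z}\cdot\mathbf{y})$ supported on $B(0,R)^2$ and a product of evaluations of $F$, $G$, $H$ at arguments linear in $\bm{\xi}$, $\mathbf{z}$, $\mathbf{y}$, the only work is bookkeeping of exponents.

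Concretely, I would start with the most computationally involved term $\mathcal{Q}_{1,q}^{R}(F,G,H)$. Writing
\[
F(\bm{\xi}+\mathbf{z})=\sum_{\mathbf{k}\in I_N}\widehat{F}_{\mathbf{k}}\, e^{i\mathbf{k}\cdot\bm{\xi}}e^{i\mathbf{k}\cdot \mathbf{z}},\qquad G(\bm{\xi}+\mathbf{y})=\sum_{\mathbf{l}\in I_N}\widehat{G}_{\mathbf{l}}\, e^{i\mathbf{l}\cdot\bm{\xi}}e^{i\mathbf{l}\cdot \mathbf{y}},\qquad H(\bm{\xi}+\mathbf{z}+\mathbf{y})=\sum_{\mathbf{m}\in I_N}\widehat{H}_{\mathbf{m}}\, e^{i\mathbf{m}\cdot\bm{\xi}}e^{i\mathbf{m}\cdot(\mathbf{z}+\mathbf{y})},
\]
and plugging these into the definition, one factors out $e^{i(\mathbf{k}+\mathbf{l}+\mathbf{m})\cdot\bm{\xi}}$ and the products $\widehat{F}_{\mathbf{k}}\widehat{G}_{\mathbf{l}}\widehat{H}_{\mathbf{m}}$. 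The remaining inner integral over $B(0,R)^2$ involves only $\tilde{B}(\mathbf{z},\mathbf{y})\delta(\mathbf{z}\cdot\mathbf{y})e^{i(\mathbf{k}+\mathbf{m})\cdot\mathbf{z}+i(\mathbf{l}+\mathbf{m})\cdot\mathbf{y}}$, which by definition is exactly $\beta^{R}(\mathbf{k}+\mathbf{m},\mathbf{l}+\mathbf{m})$. Orthogonality of the Fourier basis on $[-\pi,\pi]^{d_{\mathbf{v}}}$ then extracts the $\mathbf{n}$-th Fourier coefficient by imposing $\mathbf{k}+\mathbf{l}+\mathbf{m}=\mathbf{n}$, yielding the claimed formula.

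The other cases follow the same pattern, and the only thing that changes is which shifted arguments carry the $\mathbf{z}$ and $\mathbf{y}$ dependence, hence which combination of indices appears in the arguments of $\beta^{R}$. For $\mathcal{Q}_{1,c}^{R}$ and $\mathcal{Q}_{2,c}^{R}$, the same substitution with only two functions leaves the exponential in $\mathbf{z}$ indexed by $\mathbf{k}$ (from $F(\bm{\xi}+\mathbf{z})$ in the gain term, or by $\mathbf{l}$ from $G(\bm{\xi}+\mathbf{z}+\mathbf{y})$ in the loss term) and similarly in $\mathbf{y}$, which gives $\beta^{R}(\mathbf{k},\mathbf{l})$ and $\beta^{R}(\mathbf{l},\mathbf{l})$ respectively. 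For the trilinear quantum terms $\mathcal{Q}_{2,q}^{R}$, $\mathcal{Q}_{3,q}^{R}$, $\mathcal{Q}_{4,q}^{R}$, the factor $F(\bm{\xi})$ contributes only to the $\bm{\xi}$-exponential and not to the kernel mode; one reads off the arguments of $\beta^{R}$ by collecting, in each of the two exponents $e^{i(\cdots)\cdot\mathbf{z}}$ and $e^{i(\cdots)\cdot\mathbf{y}}$, exactly those indices whose corresponding shifted function depends on $\mathbf{z}$ (resp. $\mathbf{y}$).

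There is no serious obstacle here: the lemma is a direct consequence of Fubini's theorem, linearity, and the Fourier expansion of the periodized $F$, $G$, $H$. The only delicate point is ensuring that the exchange of sum and integral is licit, which is straightforward because the support of the kernel in $(\mathbf{z},\mathbf{y})$ is compact ($B(0,R)^2$) and, in practice, only finitely many Fourier modes $\mathbf{k},\mathbf{l},\mathbf{m}\in I_N$ are retained, so all sums are finite and the manipulations are entirely algebraic. The genuine content of the lemma is the explicit identification of which linear combination of Fourier indices enters each argument of $\beta^{R}$; this emerges mechanically from the substitution described above.
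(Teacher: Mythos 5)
Your strategy --- inserting the truncated Fourier expansions, factoring out $e^{i(\mathbf{k}+\mathbf{l}+\mathbf{m})\cdot\bm{\xi}}$, and identifying the residual $(\mathbf{z},\mathbf{y})$-integral with a kernel mode $\beta^{R}$ --- is exactly the argument the paper intends (the paper states the lemma with no written proof beyond invoking orthogonality), and your treatment of $\mathcal{Q}_{1,c}^{R}$, $\mathcal{Q}_{2,c}^{R}$ and $\mathcal{Q}_{1,q}^{R}$ is complete and correct; since all sums are finite, the interchanges of sum and integral are indeed harmless.

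There is, however, a genuine gap for the remaining three terms, and it sits precisely where your own rule contradicts the formulas you claim to recover. You correctly observe that the prefactor $F(\bm{\xi})$ in $\mathcal{Q}_{2,q}^{R}$, $\mathcal{Q}_{3,q}^{R}$, $\mathcal{Q}_{4,q}^{R}$ contributes only to the $\bm{\xi}$-exponential, so the arguments of $\beta^{R}$ can involve only the indices of the two functions standing under the integral, namely $\mathbf{l}$ and $\mathbf{m}$. Carrying out your substitution therefore gives
\begin{align*}
\widehat{\mathcal{Q}_{2,q}^{R}(F,G,H)}_{\mathbf{n}} &= \sum_{\substack{\mathbf{k},\mathbf{l},\mathbf{m} \, \in \, I_{N} \\ \mathbf{k}+\mathbf{l}+\mathbf{m} \,=\, \mathbf{n}}} \beta^{R}(\mathbf{l},\mathbf{m})\,\widehat{F}_{\mathbf{k}}\,\widehat{G}_{\mathbf{l}}\,\widehat{H}_{\mathbf{m}} \, , \\
\widehat{\mathcal{Q}_{3,q}^{R}(F,G,H)}_{\mathbf{n}} &= \sum_{\substack{\mathbf{k},\mathbf{l},\mathbf{m} \, \in \, I_{N} \\ \mathbf{k}+\mathbf{l}+\mathbf{m} \,=\, \mathbf{n}}} \beta^{R}(\mathbf{l}+\mathbf{m},\mathbf{l})\,\widehat{F}_{\mathbf{k}}\,\widehat{G}_{\mathbf{l}}\,\widehat{H}_{\mathbf{m}} \, , \qquad
\widehat{\mathcal{Q}_{4,q}^{R}(F,G,H)}_{\mathbf{n}} = \sum_{\substack{\mathbf{k},\mathbf{l},\mathbf{m} \, \in \, I_{N} \\ \mathbf{k}+\mathbf{l}+\mathbf{m} \,=\, \mathbf{n}}} \beta^{R}(\mathbf{l},\mathbf{l}+\mathbf{m})\,\widehat{F}_{\mathbf{k}}\,\widehat{G}_{\mathbf{l}}\,\widehat{H}_{\mathbf{m}} \, ,
\end{align*}
the last one after reading the integrand $G(\mathbf{y})$ as $G(\bm{\xi}+\mathbf{z}+\mathbf{y})$, as the Carleman change of variables dictates (a point a careful proof must flag, since with $G(\mathbf{y})$ taken literally no formula of the claimed shape can emerge). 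These expressions do \emph{not} coincide with the stated $\beta^{R}(\mathbf{k},\mathbf{l})$, $\beta^{R}(\mathbf{k}+\mathbf{l},\mathbf{l})$, $\beta^{R}(\mathbf{l},\mathbf{k}+\mathbf{l})$, in which the prefactor index $\mathbf{k}$ does enter the kernel mode: for pairwise distinct $F$, $G$, $H$ the two families of formulas are genuinely different. They become identical only after permuting the dummy indices $(\mathbf{k},\mathbf{l},\mathbf{m})$, a relabeling that leaves the product of Fourier coefficients unchanged exactly when $F=G=H$ --- which is the only situation in which the lemma is applied (all arguments equal $G_{h}$). So your computation, executed faithfully, proves correct formulas but not the displayed ones; to close the gap you must either add this relabeling step (restricting the three formulas in question to equal arguments), or restate the lemma with the kernel-mode indices attached to the two functions inside the integral. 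Your claim that the stated index combinations ``emerge mechanically from the substitution'' is false as written for these three terms.
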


Such an expansion of the kernel mode yields an asymptotic complexity for computing the BNE operator of $\mathcal{O}(N^{2d_\mathbf{v}})$ for the classical terms and $\mathcal{O}(N^{3d_\mathbf{v}})$ for the term $\mathcal Q_{1,q}^R$. We shall now expand as in \cite{MoPa:2006} the collision kernel in order to introduce a convolution sum, allowing to improve the overall computational complexity by almost $d_\mathbf{v}$ orders of magnitude. 

\subsection{Expansion of the kernel modes}

Let us assume from now on that $\tilde{B}$ can be expressed as follows:
\begin{equation*} 
\tilde{B}(\mathbf{z},\mathbf{y}) = a\left(|\mathbf{z}|\right) \, b\left(|\mathbf{y}|\right) \, ,
\end{equation*}
with some given functions $a, b:\R_{+} \to \R$. Hence, decomposing polar coordinate $\mathbf{z} = \rho\,\mathbf{e}$ and $\mathbf{y} = \rho'\,\mathbf{e}'$ with $\rho,\rho' \in [0,R]$, $\mathbf{e},\mathbf{e}' \in \Sp^{d_{\mathbf{v}}-1}$, one has
\begin{equation*}
\beta^{R}(\mathbf{k},\mathbf{l}) = \int_{\Sp^{d_{\mathbf{v}}-1}} \int_{\Sp^{d_{\mathbf{v}}-1}} \delta(\mathbf{e}\cdot\mathbf{e}')\, \left[\int_{0}^{R} a\left(|\rho|\right)\,\rho^{d_{\mathbf{v}}-2}\,e^{i\,\rho\,\mathbf{k}\cdot\mathbf{e}}\,d\rho\right]
 \left[ \int_{0}^{R} b\left(|\rho|\right)\,(\rho')^{d_{\mathbf{v}}-2}\,e^{i\,\rho'\,\mathbf{l}\cdot\mathbf{e}'}\,d\rho'\right] \, d\mathbf{e}\,d\mathbf{e}' \, .
\end{equation*}

\subsubsection{2D case}

In such case, defining $\mathbf{e}_{\theta} = (\cos\theta,\sin\theta)$, the expression of $\beta^{R}(\mathbf{k},\mathbf{l})$ is simplified into
\begin{displaymath}
\begin{split}
\beta^{R}(\mathbf{k},\mathbf{l}) = \int_{0}^{2\pi}\int_{0}^{2\pi} \delta(\mathbf{e}_{\theta}\cdot\mathbf{e}_{\theta'}) \, \left[\int_{0}^{R} a\left(|\rho|\right)\,e^{i\,\rho\,\mathbf{k}\cdot\mathbf{e}_{\theta}}\,d\rho\right] \left[ \int_{0}^{R} b\left(|\rho|\right)\,e^{i\,\rho'\,\mathbf{l}\cdot\mathbf{e}_{\theta'}}\,d\rho'\right] \, d\theta'\,d\theta \, .
\end{split}
\end{displaymath}
Taking advantage of symmetries of the kernel, one then has
\begin{equation*}
\beta^{R}(\mathbf{k},\mathbf{l}) 
= \int_{0}^{\pi}\int_{0}^{\pi} \delta(\mathbf{e}_{\theta}\cdot\mathbf{e}_{\theta'})\,\left[\int_{-R}^{R}a\left(|\rho|\right)\,e^{i\,\rho\,\mathbf{k}\cdot\mathbf{e}_{\theta}}\,d\rho\right] \, \left[\int_{-R}^{R}b\left(|\rho|\right)\,e^{i\,\rho'\,\mathbf{l}\cdot\mathbf{e}_{\theta'}}\,d\rho'\right] \, d\theta'\,d\theta
\end{equation*}
Since $\mathbf{e}_{\theta}\cdot\mathbf{e}_{\theta'} = 0$ with $\theta,\theta' \in [0,\pi]$ is equivalent to $\theta' \equiv \theta+\frac{\pi}{2} (\textnormal{mod} \pi)$, we finally get
\begin{equation*}
\beta^{R}(\mathbf{k},\mathbf{l}) = \int_{0}^{\pi} \phi_{R,a}^{2}(\mathbf{k}\cdot\mathbf{e}_{\theta}) \, \phi_{R,b}^{2}(\mathbf{k}\cdot\mathbf{e}_{\theta+\frac{\pi}{2} (\textnormal{mod}\pi)}) \, d\theta \, ,
\end{equation*}
with $\phi_{R,a}^{2}$ and $\phi_{R,b}^{2}$ defined as
\begin{equation*}
\phi_{R,a}^{2}(s) = \int_{-R}^{R}a\left(|\rho|\right)\,e^{i\rho s}\, d\rho \, , \qquad
\phi_{R,b}^{2}(s) = \int_{-R}^{R}b\left(|\rho|\right)\,e^{i\rho s}\, d\rho \, .
\end{equation*}
Since $\phi_{R,a}^{2}$ and $\phi_{R,b}^{2}$ are even, we deduce that $\theta \mapsto \phi_{R,a}^{2}(\mathbf{k}\cdot\mathbf{e}_{\theta}) \, \phi_{R,b}^{2}(\mathbf{k}\cdot\mathbf{e}_{\theta+\frac{\pi}{2} (\textnormal{mod}\pi)})$ is $\pi$-periodic. Hence, the rectangular integration formula is of infinite order and is optimal \cite{shen2011spectral}, so one can write
\begin{equation} \label{beta2D}
\beta^{R}(\mathbf{k},\mathbf{l}) \approx \cfrac{\pi}{M} \,\sum_{p\,=\,0}^{M-1} \alpha_{p}(\mathbf{k})\,\alpha_{p}'(\mathbf{l}) \, ,
\end{equation}
with
\begin{equation}\label{alpha2D}
\alpha_{p}(\mathbf{k}) = \phi_{R,a}^{2}(\mathbf{k}\cdot\mathbf{e}_{\theta_{p}}) \, , \qquad \alpha_{p}'(\mathbf{l}) = \phi_{R,b}^{2}(\mathbf{l}\cdot\mathbf{e}_{\theta_{p}+\frac{\pi}{2} (\textnormal{mod}\pi)}) \, , \qquad \theta_{p} = \cfrac{p\,\pi}{M} \, ,
\end{equation}
and a spectrally small error. 

\begin{remark} In the specific case where $a = b$, the function $\theta \mapsto \phi_{R,a}^{2}(\mathbf{k}\cdot\mathbf{e}_{\theta}) \, \phi_{R,b}^{2}(\mathbf{k}\cdot\mathbf{e}_{\theta+\frac{\pi}{2} (\textnormal{mod}\pi)})$ is $\frac{\pi}{2}$-periodic. Consequently, we have
\begin{equation}
\beta^{R}(\mathbf{k},\mathbf{l}) = 2\,\int_{0}^{\frac{\pi}{2}} \phi_{R,a}^{2}(\mathbf{k}\cdot\mathbf{e}_{\theta}) \, \phi_{R,a}^{2}(\mathbf{k}\cdot\mathbf{e}_{\theta+\frac{\pi}{2} (\textnormal{mod}\pi)}) \, d\theta \, ,
\end{equation}
and the rectangular integration formula gives
\begin{equation} \label{beta2D_aeqb}
\beta^{R}(\mathbf{k},\mathbf{l}) \approx \cfrac{\pi}{M} \,\sum_{p\,=\,0}^{M-1} \alpha_{p}(\mathbf{k})\,\alpha_{p}'(\mathbf{l}) \, ,
\end{equation}
with
\begin{equation}\label{alpha2D_aeqb}
\alpha_{p}(\mathbf{k}) = \phi_{R,a}^{2}(\mathbf{k}\cdot\mathbf{e}_{\theta_{p}}) \, , \qquad \alpha_{p}'(\mathbf{l}) = \phi_{R,a}^{2}(\mathbf{l}\cdot\mathbf{e}_{\theta_{p}+\frac{\pi}{2} (\textnormal{mod}\pi)}) \, , \qquad \theta_{p} = \cfrac{p\,\pi}{2M} \, ,
\end{equation}
 and a spectrally error.
\end{remark}

\subsubsection{3D case}

In the 3d case, tedious (but similar to the previous section) computations yield
\begin{displaymath}
\begin{split}
\beta^{R}(\mathbf{k},\mathbf{l}) &= \cfrac{1}{4}\,\int_{[0,\pi]^{4}} \sin\theta\,\sin\theta' \, \Bigg[\delta(\mathbf{e}_{\theta,\varphi}\cdot\mathbf{e}_{\theta',\varphi'}) \, \phi_{R,a}^{3}(\mathbf{k}\cdot\mathbf{e}_{\theta,\varphi}) \, \phi_{R,b}^{3}(\mathbf{l}\cdot\mathbf{e}_{\theta',\varphi'}) \\
&\quad + \delta(\mathbf{e}_{\theta,\varphi+\pi}\cdot\mathbf{e}_{\theta',\varphi'}) \,   \phi_{R,a}^{3}(\mathbf{k}\cdot\mathbf{e}_{\theta,\varphi+\pi}) \, \phi_{R,b}^{3}(\mathbf{l}\cdot\mathbf{e}_{\theta',\varphi'}) \\
&\quad + \delta(\mathbf{e}_{\theta,\varphi}\cdot\mathbf{e}_{\theta',\varphi'+\pi}) \,   \phi_{R,a}^{3}(\mathbf{k}\cdot\mathbf{e}_{\theta,\varphi}) \, \phi_{R,b}^{3}(\mathbf{l}\cdot\mathbf{e}_{\theta',\varphi'+\pi}) \\
&\quad + \delta(\mathbf{e}_{\theta,\varphi+\pi}\cdot\mathbf{e}_{\theta',\varphi'+\pi}) \, \phi_{R,a}^{3}(\mathbf{k}\cdot\mathbf{e}_{\theta,\varphi+\pi}) \, \phi_{R,b}^{3}(\mathbf{l}\cdot\mathbf{e}_{\theta',\varphi'+\pi}) \Bigg]\, d\varphi'\,d\varphi\,d\theta'\,d\theta \, .
\end{split}
\end{displaymath}
with $\phi_{R,a}^{3}$ and $\phi_{R,b}^{3}$ defined as
\begin{equation*}
\phi_{R,a}^{3}(s) = \int_{-R}^{R} a\left(|\rho|\right)\, |\rho| e^{i\rho\,s} \, d\rho \, , \qquad \phi_{R,b}^{3}(s) = \int_{-R}^{R} b\left(|\rho|\right)\, |\rho| e^{i\rho\,s} \, d\rho \, .
\end{equation*}
We can remark that both $\phi_{R,a}^{3}$ and $\phi_{R,b}^{3}$ functions are even. Hence, one has
\begin{displaymath}
\begin{split}
\beta^{R}(\mathbf{k},\mathbf{l}) &= \cfrac{1}{4}\,\int_{[0,\pi]^{4}} \sin\theta\,\sin\theta' \, \Bigg[\delta(\mathbf{e}_{\theta,\varphi}\cdot\mathbf{e}_{\theta',\varphi'}) \, \phi_{R,a}^{3}(\mathbf{k}\cdot\mathbf{e}_{\theta,\varphi}) \, \phi_{R,b}^{3}(\mathbf{l}\cdot\mathbf{e}_{\theta',\varphi'}) \\
&\quad + \delta(-\mathbf{e}_{\pi-\theta,\varphi}\cdot\mathbf{e}_{\theta',\varphi'}) \,   \phi_{R,a}^{3}(-\mathbf{k}\cdot\mathbf{e}_{\pi-\theta,\varphi}) \, \phi_{R,b}^{3}(\mathbf{l}\cdot\mathbf{e}_{\theta',\varphi'}) \\
&\quad + \delta(-\mathbf{e}_{\theta,\varphi}\cdot\mathbf{e}_{\pi-\theta',\varphi'}) \,   \phi_{R,a}^{3}(\mathbf{k}\cdot\mathbf{e}_{\theta,\varphi}) \, \phi_{R,b}^{3}(-\mathbf{l}\cdot\mathbf{e}_{\pi-\theta',\varphi'}) \\
&\quad + \delta(\mathbf{e}_{\pi-\theta,\varphi}\cdot\mathbf{e}_{\pi-\theta',\varphi'}) \, \phi_{R,a}^{3}(-\mathbf{k}\cdot\mathbf{e}_{\pi-\theta,\varphi}) \, \phi_{R,b}^{3}(-\mathbf{l}\cdot\mathbf{e}_{\pi-\theta',\varphi'}) \Bigg]\, d\varphi'\,d\varphi\,d\theta'\,d\theta \, ,
\end{split}
\end{displaymath}
so we finally get
\begin{equation*}
\begin{split}
\beta^{R}(\mathbf{k},\mathbf{l}) = \int_{[0,\pi]^{4}} \hspace{-0.5cm} \sin\theta\,\sin\theta' \, \delta(\mathbf{e}_{\theta,\varphi}\cdot\mathbf{e}_{\theta',\varphi'}) \, & \phi_{R,a}^{3}(\mathbf{k}\cdot\mathbf{e}_{\theta,\varphi}) \times \phi_{R,b}^{3}(\mathbf{l}\cdot\mathbf{e}_{\theta',\varphi'}) \, d\varphi'\,d\varphi\,d\theta'\,d\theta \, .
\end{split}
\end{equation*}
For any $\mathbf{e} \in \Sp^{2}$, we define $\mathbf{l} \mapsto \Pi_{\mathbf{e}^{\perp}}(\mathbf{l})$ as
\begin{equation*}
\Pi_{\mathbf{e}^{\perp}}(\mathbf{l}) = \mathbf{l} - (\mathbf{l}\cdot\mathbf{e})\,\mathbf{e} \, .
\end{equation*}
Hence, $\beta^{R}(\mathbf{k},\mathbf{l})$ is reformulated as follows:
\begin{displaymath}
\beta^{R}(\mathbf{k},\mathbf{l}) = \int_{[0,\pi]^{2}} \sin\theta \, \phi_{R,a}^{3}(\mathbf{k}\cdot\mathbf{e}_{\theta,\varphi}) \left[ \int_{\substack{(\theta',\varphi')\,\in\,[0,\pi]^{2} \\ \mathbf{e}_{\theta,\varphi}\cdot\mathbf{e}_{\theta',\varphi'} = 0}}\, \sin\theta' \, \phi_{R,b}^{3}(\mathbf{l}\cdot\mathbf{e}_{\theta',\varphi'}) \, d\varphi'\, d\theta' \right] \, d\varphi\,d\theta \, .
\end{displaymath}
Denoting with $\Sp_{+}^{2}$ the half-sphere, we can write
\begin{displaymath}
\int_{\substack{(\theta',\varphi')\,\in\,[0,\pi]^{2} \\ \mathbf{e}_{\theta,\varphi}\cdot\mathbf{e}_{\theta',\varphi'} = 0}}\, \sin\theta' \, \phi_{R,b}^{3}(\mathbf{l}\cdot\mathbf{e}_{\theta',\varphi'}) \, d\varphi'\, d\theta' = \int_{\Sp_{+}^{2} \cap e_{\theta,\varphi}^{\perp}} \phi_{R,b}^{3}(\mathbf{l}\cdot\mathbf{e}')\, d\mathbf{e}' \, .
\end{displaymath}
We define $\mathbf{e}_{\theta_{\mathbf{l}},\varphi_{\mathbf{l}}} = \cfrac{\Pi_{\mathbf{e}_{\theta,\varphi}^{\perp}}(\mathbf{l})}{\left|\Pi_{\mathbf{e}_{\theta,\varphi}^{\perp}}(\mathbf{l})\right|}$. Hence the 2D plan $\mathbf{e}_{\theta,\varphi}^{\perp}$ is provided with the direct orthonormal basis $\{\mathbf{e}_{\theta_{\mathbf{l}},\varphi_{\mathbf{l}}} \, , \, \mathbf{e}_{\theta,\varphi} \times \mathbf{e}_{\theta_{\mathbf{l}},\varphi_{\mathbf{l}}} \}$. In addition, the intersection $\Sp_{+}^{2} \cap \mathbf{e}_{\theta,\varphi}^{\perp}$ is the unit half-circle $\Sp_{+}^{1}$ and
\begin{displaymath}
\mathbf{l}\cdot\mathbf{e}' = \left|\Pi_{\mathbf{e}_{\theta,\varphi}^{\perp}}(\mathbf{l})\right| \, \mathbf{e}_{\theta_{\mathbf{l}},\varphi_{\mathbf{l}}} \cdot \mathbf{e}' \, ,
\end{displaymath}
where $\mathbf{e}_{\theta_{\mathbf{l}},\varphi_{\mathbf{l}}} \cdot \mathbf{e}'$ is no more than the cosinus of the angle between $\mathbf{e}_{\theta_{\mathbf{l}},\varphi_{\mathbf{l}}}$ and $\mathbf{e}' \in \Sp_{+}^{2} \cap e_{\theta,\varphi}^{\perp}$. Consequently, we obtain
\begin{displaymath}
\begin{split}
\int_{\substack{(\theta',\varphi')\,\in\,[0,\pi]^{2} \\ \mathbf{e}_{\theta,\varphi}\cdot\mathbf{e}_{\theta',\varphi'} = 0}}\, \sin\theta' \, \phi_{R,b}^{3}(\mathbf{l}\cdot\mathbf{e}_{\theta',\varphi'}) \, d\varphi'\, d\theta' &= \int_{0}^{\pi} \sin\theta'\, \phi_{R,b}^{3}\left( \left|\Pi_{\mathbf{e}_{\theta,\varphi}^{\perp}}(\mathbf{l})\right| \, \cos\theta'\right) \, d\theta' \\
&=: \psi_{R,b}^{3}(\Pi_{\mathbf{e}_{\theta,\varphi}^{\perp}}(\mathbf{l})) \, ,
\end{split}
\end{displaymath}
and
\begin{equation*}
\beta^{R}(\mathbf{k},\mathbf{l}) = \int_{[0,\pi]^{2}} \sin\theta\, \phi_{R,a}^{3}(\mathbf{k}\cdot\mathbf{e}_{\theta,\varphi}) \psi_{R,b}^{3}(\Pi_{\mathbf{e}_{\theta,\varphi}^{\perp}}(\mathbf{l})) \, d\varphi\,d\theta \, .
\end{equation*}
We can notice that the function $\kappa(\theta,\varphi) = \sin\theta\, \phi_{R,a}^{3}(\mathbf{k}\cdot\mathbf{e}_{\theta,\varphi}) \psi_{R,b}^{3}(\Pi_{\mathbf{e}_{\theta,\varphi}^{\perp}}(\mathbf{l}))$ is $\pi$-periodic in $\theta$ and in $\varphi$. Indeed, reminding that $\mathbf{e}_{\theta,\varphi+\pi} = \mathbf{e}_{\pi-\theta,\varphi}$ firstly gives
\begin{displaymath}
\kappa(\theta,\varphi+\pi) = \kappa(\pi-\theta,\varphi) \, ,
\end{displaymath}
so the $\pi$-periodicity in $\varphi$ will be proved along with the $\pi$-periodicity in $\theta$. This last property can be easily proved since $\sin$ is $\pi$-antiperiodic, $\mathbf{e}_{\theta+\pi,\varphi} = -\mathbf{e}_{\theta,\varphi}$, $\phi_{R,a}^{3}$ is even, and 
\begin{displaymath}
\Pi_{\mathbf{e}_{\theta+\pi,\varphi}^{\perp}}(\mathbf{l}) = \Pi_{\mathbf{e}_{\theta,\varphi}^{\perp}}(\mathbf{l}) \, ,
\end{displaymath}
which makes the basis $\{\mathbf{e}_{\theta_{\mathbf{l}},\varphi_{\mathbf{l}}} \, , \, \mathbf{e}_{\theta+\pi,\varphi} \times \mathbf{e}_{\theta_{\mathbf{l}},\varphi_{\mathbf{l}}} \}$ indirect. Consequently, we have
\begin{displaymath}
\psi_{R,b}^{3}(\Pi_{\mathbf{e}_{\theta+\pi,\varphi}^{\perp}}(\mathbf{l})) = -\psi_{R,b}^{3}(\Pi_{\mathbf{e}_{\theta,\varphi}^{\perp}}(\mathbf{l})) \, .
\end{displaymath}

Since $\kappa$ is $\pi$-periodic in $\theta$ and $\varphi$, the rectangular integration rule on $[0,\pi]^{2}$ is of infinite order and optimal, so we can write
\begin{equation} \label{beta3D}
\beta^{R}(\mathbf{k},\mathbf{l}) \approx \cfrac{\pi^{2}}{M_{1}\,M_{2}} \sum_{p\,=\,0}^{M_{1}-1}\sum_{q\,=\,0}^{M_{2}-1} \sin\theta_{p}\, \phi_{R,a}^{3}(\mathbf{k}\cdot\mathbf{e}_{\theta_{p},\varphi_{q}}) \psi_{R,b}^{3}(\Pi_{\mathbf{e}_{\theta_{p},\varphi_{q}}^{\perp}}(\mathbf{l})) \, ,
\end{equation}
with $\mathbf{e}_{\theta_{p},\varphi_{q}} = (\sin\theta_{p}\,\cos\varphi_{q}, \sin\theta_{p}\,\sin\varphi_{q},\cos\theta_{p})$, $\theta_{p} = \cfrac{p\,\pi}{M_{1}}$, $\varphi_{q} = \cfrac{\pi\,q}{M_{2}}$, and

\begin{equation} \label{alpha3D}
\begin{gathered}
\phi_{R,a}^{3}(s) = \int_{-R}^{R} a\left(|\rho|\right)\, |\rho| e^{i\rho\,s} \, d\rho \, , \quad
\phi_{R,b}^{3}(s) = \int_{-R}^{R} b\left(|\rho|\right)\, |\rho| e^{i\rho\,s} \, d\rho \, , \\
\psi_{R,b}^{3}(\Pi_{\mathbf{e}_{\theta,\varphi}^{\perp}}(\mathbf{l})) = \int_{0}^{\pi} \sin\theta'\, \phi_{R,b}^{3}\left(\left|\Pi_{\mathbf{e}_{\theta,\varphi}^{\perp}}(\mathbf{l})\right| \, \cos\theta'\right) \, d\theta' \, .
\end{gathered}
\end{equation}

\subsection{Fast computation of the collision modes}

It is possible to gather the formula \eqref{beta2D} and \eqref{beta3D} under the common expression
\begin{equation}\label{betadD}
\beta^{R}(\mathbf{k},\mathbf{l}) = C\, \sum_{p\,=\,0}^{P-1} \alpha_{R,p}(\mathbf{k})\,\alpha_{R,p}'(\mathbf{l}) \, ,
\end{equation}
where $C$, $P$, $\alpha_{p}$, $\alpha_{p}'$ are defined as follows:

\paragraph{2D case} Let $C = \cfrac{\pi}{M}$, $P = M$ and, for any $p = 0,\ldots,M-1$,
\begin{equation} \label{alphap}
\left \{\begin{gathered}
\alpha_{R,p}(\mathbf{k}) = \phi_{R,a}^{2}(\mathbf{k}\cdot\mathbf{e}_{\theta_{p}}) \, , \quad
\alpha_{R,p}'(\mathbf{l}) = \phi_{R,b}^{2}(\mathbf{l}\cdot\mathbf{e}_{\theta_{p}+\frac{\pi}{2} (\textnormal{mod}\pi)}) \, , \\
\theta_{p} = \cfrac{p\,\pi}{M} \, , \quad \mathbf{e}_{\theta_{p}} = (\cos\theta_{p},\sin\theta_{p}) \, , \\
\phi_{R,a}^{2}(s) = \int_{-R}^{R} a\left(|\rho|\right)\, e^{i\rho\,s} \, d\rho \, , \quad
\phi_{R,b}^{2}(s) = \int_{-R}^{R} b\left(|\rho|\right)\, e^{i\rho\,s} \, d\rho \, , \\
\end{gathered}\right.
\end{equation}
In the specific case of 2D Maxwellian molecules ($\gamma = 0$, $b\equiv 1$) so we get
\[
\tilde{B}(\mathbf{z},\mathbf{y}) = 2C_{\Phi} \, \quad a(|\mathbf{z}|) = 2C_{\Phi} \, , \quad b(|\mathbf{y}|) = 1\, .
\]
This gives
\begin{displaymath}
\beta^{R}(\mathbf{k},\mathbf{l}) = \cfrac{\pi}{M}\, \sum_{p\,=\,0}^{P-1} \alpha_{R,p}(\mathbf{k})\,\alpha_{R,p}'(\mathbf{l}) \, ,
\end{displaymath}
with $\alpha_{R,p}(\mathbf{k})$ and $\alpha_{R,p}'(\mathbf{l})$ computed as follows:
\[
\begin{gathered}
\alpha_{R,p}(\mathbf{k}) = 2C_{\Phi}\,\phi_{R}^{2}(\mathbf{k}\cdot\mathbf{e}_{\theta_{p}}) \, , \quad
\alpha_{R,p}'(\mathbf{l}) = \phi_{R}^{2}(\mathbf{l}\cdot\mathbf{e}_{\theta_{p}+\frac{\pi}{2} (\textnormal{mod}\pi)}) \, , \\
\phi_{R}^{2}(s) = 2R\,\textnormal{Sinc}(Rs) \, .
\end{gathered}
\]

\paragraph{3D case} Let $C = \cfrac{\pi^{2}}{M_{1}M_{2}}$, $P = M_{1}M_{2}$, and, for any $p = 0,\ldots,M_{1}-1$, $q = 0,\ldots,M_{2}-1$,
\begin{equation} \label{alphapprime}
\left\{\begin{gathered}
\alpha_{R,q+M_{2}p}(\mathbf{k}) = \sin\theta_{p}\, \phi_{R,a}^{3}(\mathbf{k}\cdot\mathbf{e}_{\theta_{p},\varphi_{q}}) \, , \quad
\alpha_{R,q+M_{2}p}'(\mathbf{l}) = \psi_{R,b}^{3}(\Pi_{\mathbf{e}_{\theta_{p},\varphi_{q}}^{\perp}}(\mathbf{l})) \, , \\
\theta_{p} = \cfrac{p\,\pi}{M_{1}} \, , \quad \varphi_{q} = \cfrac{\pi\,q}{M_{2}} \, , \quad
\mathbf{e}_{\theta_{p},\varphi_{q}} = (\sin\theta_{p}\,\cos\varphi_{q}, \sin\theta_{p}\,\sin\varphi_{q},\cos\theta_{p}) \, , \\
\phi_{R,a}^{3}(s) = \int_{-R}^{R} a\left(|\rho|\right)\, |\rho| e^{i\rho\,s} \, d\rho \, , \quad
\phi_{R,b}^{3}(s) = \int_{-R}^{R} b\left(|\rho|\right)\, |\rho| e^{i\rho\,s} \, d\rho \, , \\
\psi_{R,b}^{3}(\Pi_{\mathbf{e}_{\theta,\varphi}^{\perp}}(\mathbf{l})) = \int_{0}^{\pi} \phi_{R,b}^{3}\left(\left|\Pi_{\mathbf{e}_{\theta,\varphi}^{\perp}}(\mathbf{l})\right| \, \cos\theta'\right) \, d\theta' \, .
\end{gathered}\right.
\end{equation}
In the specific case of 3D hard spheres ($\gamma = 1$, $b\equiv 1$), we get
\[
\tilde{B}(\mathbf{z},\mathbf{y}) = 4C_{\Phi} \, , \quad
a(|\mathbf{z}|) = 4C_{\Phi} \, , \qquad b(|\mathbf{y}|) = 1\, .
\]
This gives
\begin{displaymath}
\beta^{R}(\mathbf{k},\mathbf{l}) = \cfrac{\pi^{2}}{M_{1}M_{2}}\, \sum_{p\,=\,0}^{P-1} \alpha_{R,p}(\mathbf{k})\,\alpha_{R,p}'(\mathbf{l}) \, ,
\end{displaymath}
with $M = M_{1}M_{2}$, $\alpha_{R,p}(\mathbf{k})$ and $\alpha_{R,p}'(\mathbf{l})$ computed as follows:
\[
\begin{gathered}
\alpha_{R,q+M_{2}p}(\mathbf{k}) = 4C_{\Phi}\,\sin\theta_{p}\, \phi_{R}^{3}(\mathbf{k}\cdot \mathbf{e}_{\theta_{p},\varphi_{q}}) \, , \quad
\alpha_{R,q+M_{2}p}'(\mathbf{l}) = \psi_{R}^{3}\left(\left|\Pi_{\mathbf{e}_{\theta_{p},\varphi_{q}}^{\perp}}(\mathbf{l})\right|\right) \, , \\
\phi_{R}^{3}(s) = R^{2}\,\left(2\,\textnormal{Sinc}(sR) - \textnormal{Sinc}^{2}\left(\cfrac{sR}{2}\right)\right) \, , \quad
\psi_{R}^{3}(s) = 2R^{2}\,\textnormal{Sinc}^{2}\left(\cfrac{Rs}{2}\right) \, .
\end{gathered}
\]

Both these formula allow for an overall computational cost of $\mathcal{O}(N^{2d_\mathbf{v}} \log_2 N)$. The practical details of the implementation are summarized in the appendix.

\section{A rescaling velocity model of the BNE}
\label{sec:Rescaling}

	We have seen in Section \ref{sec:stationary_state} that the BNE  can exhibits blowup in the quantum degenerative fermionic case,
	but with no a priori knowledge on the rate of explosion. In order to simulate numerically such result we propose to adopt the rescaling velocity framework that was developed in the series of papers \cite{Filbet:2004,FilbetRey:2012,ReyTan:2014}.
	It consists in rescaling dynamically the equation, hence allowing to follow very accurately the spread and concentration of the solution to the equation.
	
	\subsection{A general scaling framework}
We now introduce a given function $\omega:\R_{+} \times \Omega_{\mathbf{x}} \to \R_{+}^{*}$. Such function has to be positive, and represents accurately the support in velocity of the distribution $f(t,\mathbf x, \cdot)$ solution to \eqref{Boltzmann_start}. 
 Candidates for $\omega$ will be presented later. At present time, we focus on the function $G:\R_{+} \times \Omega_{\mathbf{x}} \times \R^{d_{\mathbf{v}}}$ that is introduced as follows: for any $(t,\mathbf{x},\mathbf{v}) \in \R_{+} \times \Omega_{\mathbf{x}} \times \R^{d_{\mathbf{v}}}$, 
\begin{equation} \label{rescaling_def_fg}
f(t,\mathbf{x},\mathbf{v}) = \mu\,\left(\cfrac{\pi\,\omega(t,\mathbf{x})}{L}\right)^{d_{\mathbf{v}}} \, G\left(t,\mathbf{x},\cfrac{\pi\,\omega(t,\mathbf{x})}{L}\left(\mathbf{v}-\lambda\,\mathbf{u}(t,\mathbf{x})\right)\right) \, ,
\end{equation}
with $\mu > 0$, $L > 0$ and $\lambda \in \{0,1\}$ to be precised later. We can remark that
\[
\rho(t,\mathbf{x}) = \int_{\R^{d_{\mathbf{v}}}} f(t,\mathbf{x},\mathbf{v})\, d\mathbf{v} = \mu\, \int_{\R^{d_{\mathbf{v}}}} G(t,\mathbf{x},\bm{\xi})\, d\bm{\xi} \, , \qquad 
\int_{\R^{d_{\mathbf{v}}}} \bm{\xi}\,G(t,\mathbf{x},\bm{\xi}) \, d\bm{\xi}  = \bm{0} \, ,
\]
for all $(t,\mathbf{x}) \in \R_{+} \times \Omega_{\mathbf{x}}$. We shall exhibit the evolution equation that is satisfied by $G$ by injecting \eqref{rescaling_def_fg} in \eqref{Boltzmann_start}. 

 We first define the change of variables
\begin{equation*}
\bm{\xi}(t,\mathbf{x},\mathbf{v}) = \cfrac{\pi\,\omega(t,\mathbf{x})}{L}\left(\mathbf{v}-\lambda\,\mathbf{u}(t,\mathbf{x})\right) \, , \qquad \mathbf{v}(t,\mathbf{x},\bm{\xi}) = \cfrac{L\,\bm{\xi}}{\pi\,\omega(t,\mathbf{x})} + \lambda\,\mathbf{u}(t,\mathbf{x}) \, ,
\end{equation*}
so we have
\begin{equation*}
f(t,\mathbf{x},\mathbf{v}) = \mu\,\left(\cfrac{\pi\,\omega(t,\mathbf{x})}{L}\right)^{d_{\mathbf{v}}}\, G\left(t,\mathbf{x},\bm{\xi}(t,\mathbf{x},\mathbf{v})\right)  \Leftrightarrow
G(t,\mathbf{x},\bm{\xi}) = \mu^{-1} \left(\cfrac{\pi\,\omega(t,\mathbf{x})}{L}\right)^{-d_{\mathbf{v}}}\, f\left(t,\mathbf{x},\mathbf{v}(t,\mathbf{x},\bm{\xi})\right) \, .
\end{equation*}
The time derivative of $f$ is rewritten as
\begin{equation} \label{rescaling_dt}
\begin{split}
\D_{t}f(t,\mathbf{x},\mathbf{v}) &= \mu\, \left(\cfrac{\pi\,\omega(t,\mathbf{x})}{L}\right)^{d_{\mathbf{v}}} \Bigg[ d_{\mathbf{v}}\, \cfrac{\D_{t}\omega(t,\mathbf{x})}{\omega(t,\mathbf{x})}\, G\left(t,\mathbf{x},\bm{\xi}(t,\mathbf{x},\mathbf{v})\right) \\
&\qquad + (\D_{t}G)\left(t,\mathbf{x},\bm{\xi}(t,\mathbf{x},\mathbf{v})\right) \\
&\qquad + \cfrac{\D_{t}\omega(t,\mathbf{x})}{\omega(t,\mathbf{x})} \, \bm{\xi}(t,\mathbf{x},\mathbf{v}) \cdot (\nabla_{\bm{\xi}}G)\left(t,\mathbf{x},\bm{\xi}(t,\mathbf{x},\mathbf{v})\right) \\
&\qquad - \lambda\,\cfrac{\pi\,\omega(t,\mathbf{x})}{L} \, \D_{t}\mathbf{u}(t,\mathbf{x})\cdot (\nabla_{\bm{\xi}}G)\left(t,\mathbf{x},\bm{\xi}(t,\mathbf{x},\mathbf{v})\right) \Bigg] \, .
\end{split}
\end{equation}

\indent The free transport term is rewritten as
\begin{equation} \label{rescaling_transport}
\begin{split}
\mathbf{v} \cdot \nabla_{\mathbf{x}}f(t,\mathbf{x},\mathbf{v}) &=  \left( \cfrac{L}{\pi\,\omega(t,\mathbf{x})}\, \bm{\xi}(t,\mathbf{x},\mathbf{v}) + \lambda\,\mathbf{u}(t,\mathbf{x})\right) \, \mu\, \left(\cfrac{\pi\,\omega(t,\mathbf{x})}{L}\right)^{d_{\mathbf{v}}} \\
& \qquad \cdot \Bigg[ \cfrac{d_{\mathbf{v}}}{\omega(t,\mathbf{x})}\, \nabla_{\mathbf{x}}\omega(t,\mathbf{x})\, G\left(t,\mathbf{x},\bm{\xi}(t,\mathbf{x},\mathbf{v})\right) \\
&\qquad \quad + (\nabla_{\mathbf{x}}G)\left(t,\mathbf{x},\bm{\xi}(t,\mathbf{x},\mathbf{v})\right) \\
&\qquad \quad + \left(\cfrac{\D_{x_{j}}\omega(t,\mathbf{x})}{\omega(t,\mathbf{x})}\, \bm{\xi}(t,\mathbf{x},\mathbf{v}) \cdot (\nabla_{\bm{\xi}}G)\left(t,\mathbf{x},\bm{\xi}(t,\mathbf{x},\mathbf{v})\right)\right)_{j\,=\,1,\dots,d_{\mathbf{v}}} \\
&\qquad \quad - \cfrac{\pi\,\omega(t,\mathbf{x})}{L}\, \lambda\, \left(\D_{x_{j}}\mathbf{u}(t,\mathbf{x}) \cdot (\nabla_{\bm{\xi}}G)\left(t,\mathbf{x},\bm{\xi}(t,\mathbf{x},\mathbf{v})\right)\right)_{j\,=\,1,\dots,d_{\mathbf{v}}} \Bigg] \, .
\end{split}
\end{equation}

Concerning the rewritting of the collision operator $\mathcal{Q}_\alpha(f)$, we first remark that
\begin{displaymath}
B\left(|\mathbf{v}-\mathbf{v}_{*}|, \cos\theta\right) = \left(\cfrac{\pi\,\omega(t,\mathbf{x})}{L}\right) ^{-\gamma}\, B\left(\left| \bm{\xi}(t,\mathbf{x},\mathbf{v}) - \bm{\xi}(t,\mathbf{x},\mathbf{v}_{*})\right|, \cos\theta\right) \, ,
\end{displaymath}
for any $(\mathbf{v},\mathbf{v}_{*},\theta) \in \R^{d_{\mathbf{v}}} \times \R^{d_{\mathbf{v}}} \times \R$. In the same spirit, we can remark that, with the convention \eqref{changevar_sigma}, we have
\begin{displaymath}
\begin{split}
f(t,\mathbf{x},\mathbf{v}') &= \left(\cfrac{\pi\,\omega(t,\mathbf{x})}{L}\right)^{d_{\mathbf{v}}}\, \mu\, G\Bigg(t, \mathbf{x}, \cfrac{\bm{\xi}(t,\mathbf{x},\mathbf{v})+\bm{\xi}(t,\mathbf{x},\mathbf{v}_{*})}{2} + \cfrac{\left| \bm{\xi}(t,\mathbf{x},\mathbf{v})-\bm{\xi}(t,\mathbf{x},\mathbf{v}_{*})\right|}{2}\, \bm{\sigma}\Bigg) \, , \\
f(t,\mathbf{x},\mathbf{v}_{*}') &= \left(\cfrac{\pi\,\omega(t,\mathbf{x})}{L}\right)^{d_{\mathbf{v}}}\, \mu\, G\Bigg(t, \mathbf{x}, \cfrac{\bm{\xi}(t,\mathbf{x},\mathbf{v})+\bm{\xi}(t,\mathbf{x},\mathbf{v}_{*})}{2} - \cfrac{\left| \bm{\xi}(t,\mathbf{x},\mathbf{v})-\bm{\xi}(t,\mathbf{x},\mathbf{v}_{*})\right|}{2}\, \bm{\sigma}\Bigg) \, , \, .
\end{split}
\end{displaymath}
Consequently, the collision operator writes as
\begin{equation} \label{rescaling_collision}
\mathcal{Q}_\alpha(f)(t,\mathbf{x},\mathbf{v}) = \left(\cfrac{\pi\,\omega(t,\mathbf{x})}{L}\right)^{d_{\mathbf{v}}-\gamma} \, \mu^{2}\, \mathcal{Q}_{\alpha\mu(\frac{\pi\omega(t,\mathbf{x})}{L})^{d_{\mathbf{v}}}}(G)\left(t,\mathbf{x},\bm{\xi}(t,\mathbf{x},\mathbf{v})\right) \, .
\end{equation}

Gathering \eqref{rescaling_dt}, \eqref{rescaling_transport} and \eqref{rescaling_collision}, we get the evolution equation for $G$:
\begin{equation} \label{rescaled_model}
\begin{split}
 \D_{t}G & +   \nabla_{\bm{\xi}} \cdot \left ( \left ( \cfrac{\D_{t}\omega}{\omega} \, \bm{\xi} - \lambda\,\cfrac{\pi\,\omega}{L} \, \D_{t}\mathbf{u} \right ) G\right )G \\
&+ \tau\, \left( \cfrac{L}{\pi\,\omega}\, \bm{\xi} + \lambda\,\mathbf{u}\right) \cdot \Bigg[ \cfrac{d_{\mathbf{v}}}{\omega}\, \nabla_{\mathbf{x}}\omega\, G + \nabla_{\mathbf{x}}G + \left(\cfrac{\D_{x_{j}}\omega}{\omega}\, \bm{\xi} \cdot \nabla_{\bm{\xi}}G\right)_{j\,=\,1,\dots,d_{\mathbf{v}}} \\
&\qquad \qquad \qquad \qquad \qquad - \cfrac{\pi\,\omega}{L}\, \lambda\, \left(\D_{x_{j}}\mathbf{u} \cdot \nabla_{\bm{\xi}}G\right)_{j\,=\,1,\dots,d_{\mathbf{v}}} \Bigg] \\
= c \, \mu\, &\left(\cfrac{\pi\,\omega}{L}\right)^{-\gamma} \, \mathcal{Q}_{\alpha\mu(\frac{\pi\omega}{L})^{d_{\mathbf{v}}}}(G)
\end{split}
\end{equation}

	\subsection{Application to the space homogeneous BNE}
\label{spectral_homogeneous}
We now focus on the following homogeneous quantum Boltzmann equation written under divergence form:
\begin{equation*}
\left\{
\begin{array}{l}
\D_{t}G + \nabla_{\bm{\xi}} \cdot \left(\left(\cfrac{\D_{t}\omega}{\omega} \, \bm{\xi} - \lambda\,\cfrac{\pi\,\omega}{L} \, \D_{t}\mathbf{u} \right)\, G \right) = c \, \mu\, \left(\cfrac{\pi\,\omega}{L}\right)^{-\gamma} \, \mathcal{Q}_{\alpha\mu(\frac{\pi\omega}{L})^{d_{\mathbf{v}}}}^{R}(G) \, , \\
G(t=0,\bm{\xi}) = \cfrac{1}{\mu}\,\left(\cfrac{L}{\pi\,\omega^{0}}\right)^{d_{\mathbf{v}}} \, f^{0}\left(\cfrac{L}{\pi\,\omega^{0}}\, \bm{\xi} + \mathbf{u}^{0}\right) \, ,
\end{array}
\right.
\end{equation*}
At this point, the classical method can be distinguished from the velocity rescaling method as follows:

\paragraph{The classical method} It is characterized by the following parameter choice:
\begin{equation*}
\lambda = 0 \, , \qquad \omega \equiv 1 \, , \qquad \mu = \left(\cfrac{L}{\pi}\right)^{d_{\mathbf{v}}} \, ,
\end{equation*}
so the equation to be discretized on $\R_{+} \times [-\pi,\pi]^{d_{\mathbf{v}}}$ is
\begin{equation*}
\left\{
\begin{array}{l}
\D_{t}G = c \, \left(\frac{\pi}{L}\right)^{-d_{\mathbf{v}}-\gamma} \, \mathcal{Q}_{\alpha}^{R}(G) \, , \\
G(t=0,\bm{\xi}) = f^{0}\left(\cfrac{L}{\pi}\,\bm{\xi}\right) \, ,
\end{array}
\right.
\end{equation*}
and $f$ is obtained from $G$ on $\R_{+} \times \Omega_{\mathbf{v}}$ with $\Omega_{\mathbf{v}} = [-L,L]^{d_{\mathbf{v}}}$.

\paragraph{The velocity rescaling method} It is characterized by 
\begin{equation*}
\lambda = 1 \, , \qquad \mu = 1 \, ,
\end{equation*}
and $\omega$ to be identified later, so the equation in $G$ to be solved on $\R_{+}\times[-\pi,\pi]^{d_{\mathbf{v}}}$ is
\begin{equation*}
\left\{
\begin{array}{l}
\D_{t}G + \nabla_{\bm{\xi}} \cdot \left(\left(\cfrac{\D_{t}\omega}{\omega} \, \bm{\xi} - \cfrac{\pi\,\omega}{L} \, \D_{t}\mathbf{u} \right)\, G \right) = c \, \left(\cfrac{\pi\,\omega}{L}\right)^{-\gamma} \, \mathcal{Q}_{\alpha(\frac{\pi\omega}{L})^{d_{\mathbf{v}}}}^{R}(G) \, , \\
G(t=0,\bm{\xi}) = \left(\cfrac{L}{\pi\,\omega^{0}}\right)^{d_{\mathbf{v}}} \, f^{0}\left(\cfrac{L}{\pi\,\omega^{0}}\, \bm{\xi} + \mathbf{u}^{0}\right) \, ,
\end{array}
\right.
\end{equation*}
and $f(t,\cdot)$ is obtained from $G(t,\cdot)$ on $\Omega_{\mathbf{v}}(t) = \mathbf{u}(t)+\left[-\cfrac{L}{\omega(t)},\cfrac{L}{\omega(t)}\right]^{d_{\mathbf{v}}}$ for any $t \in \R_{+}$.

\subsection{Numerical scheme for the velocity rescaled BNE}
We assume from now that the support of $G(t=0,\cdot)$ is included in $B(0,S) \subset [-\pi,\pi]^{d_{\mathbf{v}}}$ with $S > 0$ and $R > 0$ such that 
\begin{displaymath}
R \geq 2S \, , \qquad \pi \geq \cfrac{R+(1+\sqrt{2})S}{2} \, .
\end{displaymath}
We consider a time grid $(t^{n})_{n\,\in\,\N} \subset \R_{+}$ with $\Delta t^{n} = t^{n+1}-t^{n} > 0$, $t^{0} = 0$ and $G^{n}:[-\pi,\pi]^{d_{\mathbf{v}}} \to \R$ being an approximation of $G(t^{n},\cdot)$. Finally, we define $\Delta v^{n}$ and $\Delta \xi$ as
\begin{displaymath}
\Delta v^{n} =  \left(\cfrac{2L}{\omega^{n} \, N}\right)^{d_{\mathbf{v}}} \, , \qquad \Delta \xi = \left ( \cfrac{2\pi}{N} \right ) ^{d_{\mathbf{v}}} ,
\end{displaymath}
and we consider a space discretization of $G^{n}$ with 
\begin{displaymath}
G^{n}(\bm{\xi}_{\mathbf{j}}) \approx G_{\mathbf{j}}^{n} \, , \qquad \xi_{j_{d},d} = \cfrac{2j_{d}\pi}{N} \, , \quad j_{d} = -\cfrac{N}{2},\dots,\cfrac{N}{2}-1\, , \quad d=1,\dots,d_{\mathbf{v}} \, ,
\end{displaymath}
and
\begin{displaymath}
G_{\mathbf{j}}^{0} := G(0,\bm{\xi}_{\mathbf{j}}) = \cfrac{1}{\mu}\,\left(\cfrac{L}{\pi\,\omega^{0}}\right)^{d_{\mathbf{v}}} \, f^{0}\left(\cfrac{L}{\pi\,\omega^{0}}\, \bm{\xi}_{\mathbf{j}} + \lambda\,\mathbf{u}^{0}\right) \, .
\end{displaymath}
Defining $f_{h}^{n} = (f_{\mathbf{j}}^{n})_{\mathbf{j} \, \in \, I_{\mathbf{N}}}$ with
\begin{displaymath}
f_{\mathbf{j}}^{n} := \mu\, \left(\cfrac{\pi\,\omega^{n}}{L}\right)^{d_{\mathbf{v}}} \, G_{\mathbf{j}}^{n} \, ,
\end{displaymath}
and $v_{j_{d},d}^{n} = \lambda\,u_{d}^{n} + \cfrac{2L\,j_{d}}{N\,\omega^{n}}$ for any $j_{d} = -\cfrac{N}{2},\dots,\cfrac{N}{2}-1$, $d = 1,\dots,d_{\mathbf{v}}$ ($u^n_d$ being an approximation of $u_d(t^n)$, to be defined later), we have
\begin{displaymath}
G(t^{n},\bm{\xi}_{\mathbf{j}}) \approx G_{\mathbf{j}}^{n} \quad \Longleftrightarrow \quad f(t^{n},\mathbf{v}_{\mathbf{j}}^{n}) \approx f_{\mathbf{j}}^{n}
\end{displaymath}
for any $\mathbf{j} \in I_{{N}}$ and $n \in \N$. This provides an approximation of $f(t,\cdot)$ on $\Omega_{\mathbf{v}}(t)$. 

\begin{remark}
\textnormal{In most cases, it is possible to identify the limit state $f^{\infty}$ such that $f \to f^{\infty}$ as $t \to +\infty$. For this purpose, we refer to Section \ref{sec:stationary_state} in order to identify the expression of $f^{\infty}$. This function is characterized thanks to the moments of $f^{0}$ (mass $\rho^{0}$, averaged velocity $\mathbf{u}^{0}$, internal energy $e^{0}$...). Assuming that $f^{\infty}$ is analytically known, we build $f_{h}^{\infty,n} = (f_{\mathbf{j}}^{\infty,n})_{\mathbf{j}\,\in\,I_{N}}$ for any $n$ such that}
\begin{displaymath}
f_{\mathbf{j}}^{\infty,n} = f^{\infty}(\mathbf{v}_{\mathbf{j}}^{n}) \, ,
\end{displaymath}
\textnormal{which gives an approximation of $f^{\infty}$ on $\Omega_{\mathbf{v}}^{n}$. We can also introduce $G_{h}^{\infty,n} = (G_{\mathbf{j}}^{\infty,n})_{\mathbf{j} \, \in \, I_{{N}}}$ as}
\begin{displaymath}
G_{\mathbf{j}}^{\infty,n} = \cfrac{1}{\mu}\, \left( \cfrac{L}{\pi\,\omega^{n}} \right)^{d_{\mathbf{v}}} \, f_{\mathbf{j}}^{\infty,n} \, , \qquad \forall\, \mathbf{j} \in I_{{N}} \, .
\end{displaymath}
\end{remark}

We describe now the procedure associated to a forward Euler time-semi-discretization:
\begin{enumerate}
\item We assume that $G_{h}^{n}$ (or equivalently $f_{h}^{n}$) is known along with the mass $\rho^{n}$, the averaged velocity $\mathbf{u}^{n}$, the internal energy $e^{n}$, and the velocity scale $\omega^{n}$ at time step $n$.
\item We compute $\mathcal{Q}_{\alpha\mu(\frac{\pi\omega^{n}}{L})^{d_{\mathbf{v}}}}^{R}(G_{h}^{n})_{\mathbf{j}}$ for any $\mathbf{j}\in I_{{N}}$:
\begin{enumerate}
\item We compute the Fourier modes of $G_{h}^{n}$ and get
\begin{displaymath}
\widehat{G}_{\mathbf{k}}^{n} = \cfrac{1}{N}\, \sum_{\mathbf{j}\,\in\,I_N} G_{\mathbf{j}}^{n}\, e^{-2i\pi \, \frac{\mathbf{k}\cdot\mathbf{j}}{N}}\, , \qquad \forall \mathbf{k} \in I_{{N}} \, .
\end{displaymath}

\item We compute $\mathcal{Q}_{i,c}^{R}(G_{h}^{n},G_{h}^{n})_{\mathbf{j}}$ ($i = 1,2$) and $\mathcal{Q}_{i,q}^{R}(G_{h}^{n},G_{h}^{n},G_{h}^{n})_{\mathbf{j}}$ ($i=1,2,3,4$) for any $\mathbf{j} \in I_N$.

\item We get $\mathcal{Q}_{\alpha\mu(\frac{\pi\omega^{n}}{L})^{d_{\mathbf{v}}}}^{R}(G_{h}^{n})_{\mathbf{j}}$ as
\begin{displaymath}
\begin{split}
\mathcal{Q}_{\alpha\mu(\frac{\pi\omega^{n}}{L})^{d_{\mathbf{v}}}}^{R}(G_{h}^{n})_{\mathbf{j}} = &\mathcal{Q}_{1,c}^{R}(G_{h}^{n},G_{h}^{n})_{\mathbf{j}} - \mathcal{Q}_{2,c}^{R}(G_{h}^{n},G_{h}^{n})_{\mathbf{j}} \\
&- \alpha\mu(\frac{\pi\omega^{n}}{L})^{d_{\mathbf{v}}} \Big[ \mathcal{Q}_{1,q}^{R}(G_{h}^{n},G_{h}^{n},G_{h}^{n})_{\mathbf{j}} + \mathcal{Q}_{2,q}^{R}(G_{h}^{n},G_{h}^{n},G_{h}^{n})_{\mathbf{j}} \\
&\qquad \qquad \qquad \qquad - \mathcal{Q}_{3,q}^{R}(G_{h}^{n},G_{h}^{n},G_{h}^{n})_{\mathbf{j}} - \mathcal{Q}_{4,q}^{R}(G_{h}^{n},G_{h}^{n},G_{h}^{n})_{\mathbf{j}} \Big] \, .
\end{split}
\end{displaymath}
\end{enumerate}

\item \underline{If we do not assume $\omega \equiv 1$} (velocity rescaling case), we do the following computations:
\begin{enumerate}
\item We estimate $\rho^{n+1}$, $\mathbf{u}^{n+1}$ and $e^{n+1}$ with
\begin{equation*}
\begin{split}
\cfrac{\rho^{n+1}-\rho^{n}}{\Delta t^{n}} 
&= c \, \Delta v^{n} \sum_{\mathbf{j} \, \in \, I_N} \mathcal{Q}_{\alpha}^{R}(f_{h}^{n})_{\mathbf{j}} \\
&= c \Delta \xi \, \left(\cfrac{\pi\omega^{n}}{L}\right)^{-\gamma} \, \mu^{2}\,\sum_{\mathbf{j} \, \in \, I_N}  \mathcal{Q}_{\alpha\mu(\frac{\pi\omega^{n}}{L})^{d_{\mathbf{v}}}}^{R}(G_{h}^{n})_{\mathbf{j}} \, ;
\end{split}
\end{equation*}
\begin{equation*}
\begin{split}
\cfrac{\rho^{n+1}\mathbf{u}^{n+1}-\rho^{n}\mathbf{u}^{n}}{\Delta t^{n}} 
& = c \, \Delta v^{n} \sum_{\mathbf{j} \, \in \, I_N} \mathbf{v}_{\mathbf{j}}^{n}\, \mathcal{Q}_{\alpha}^{R}(f_{h}^{n})_{\mathbf{j}}  \\
& = c \, \Delta \xi \, \left(\cfrac{\pi\omega^{n}}{L}\right)^{-\gamma} \, \mu^{2}\, \sum_{\mathbf{j} \, \in \, I_N} \left(\cfrac{L}{\pi\,\omega^{n}}\, \bm{\xi}_{\mathbf{j}} + \lambda\,\mathbf{u}^{n}\right) \, \mathcal{Q}_{\alpha\mu(\frac{\pi\omega^{n}}{L})^{d_{\mathbf{v}}}}^{R}(G_{h}^{n})_{\mathbf{j}} \, ;
\end{split}
\end{equation*}
\begin{equation*}
\begin{split}
\cfrac{\rho^{n+1}e^{n+1}-\rho^{n}e^{n}}{\Delta t^{n}} & = \cfrac{c}{2} \, \Delta v^{n} \sum_{\mathbf{j} \, \in \, I_N} |\mathbf{v}_{\mathbf{j}}^{n}-\mathbf{u}^{n}|^{2}\, \mathcal{Q}_{\alpha}^{R}(f_{h}^{n})_{\mathbf{j}} \\
& = \cfrac{c}{2} \, \Delta \xi \, \left(\cfrac{\pi\omega^{n}}{L}\right)^{-\gamma} \, \mu^{2}\, \sum_{\mathbf{j} \, \in \, I_N} \left|\cfrac{L}{\pi\,\omega^{n}}\, \bm{\xi}_{\mathbf{j}} + (\lambda-1)\,\mathbf{u}^{n}\right|^{2} \,  \mathcal{Q}_{\alpha\mu(\frac{\pi\omega^{n}}{L})^{d_{\mathbf{v}}}}^{R}(G_{h}^{n})_{\mathbf{j}}  \, .
\end{split}
\end{equation*}

\item We then compute the temperature $T^{n+1}$. Since it is trivial for the classical case $\alpha = 0$ ($T^{n+1} = {2}/d_{\mathbf{v}} \, e^{n+1}$), let us focus on the quantum case $\alpha \neq 0$. The temperature $T^{n+1}$ is obtained as follows: test the inequality
		\begin{displaymath}
|\alpha| \, \rho^{n+1}\, \left(\cfrac{d_{\mathbf{v}}}{4\pi e^{n+1}}\right)^{\frac{d_{\mathbf{v}}}{2}} \in I_{\textnormal{eq}} \, ,
		\end{displaymath}
		where $I_{\textnormal{eq}}$ is defined by \eqref{def_Ieq}. If it is true, $T^{n+1}$ is solved along with the fugacity $z^{n+1}$ by solving the following nonlinear system:
\begin{displaymath}
\left\{
\begin{array}{rcl}
\rho^{n+1} &=& \cfrac{(2\pi T^{n+1})^{\frac{d_{\mathbf{v}}}{2}}}{|\alpha|} \, K_{\frac{d_{\mathbf{v}}}{2}}(z^{n+1}) \, , \\
e^{n+1} &=& \cfrac{d_{\mathbf{v}}\,T^{n+1}}{2}\, \cfrac{K_{\frac{d_{\mathbf{v}}}{2}+1}(z^{n+1})}{K_{\frac{d_{\mathbf{v}}}{2}}(z^{n+1})} \, .
\end{array}
\right.
\end{displaymath}
		If it is false, we distinguish 2 cases:
		\begin{itemize}
			\item For 3D Bose-Einstein case, condensation occurs, so the fugacity $z^{n+1}$ is equal to 1 and $T^{n+1}$ is explicitly computed as follows: 
			\begin{displaymath}
T^{n+1} = \left[\cfrac{2\,|\alpha|\,\rho^{n+1} \, e^{n+1}}{3\,\zeta(5/2)\, (2\pi)^{3/2}}\right]^{2/5} \, ,
			\end{displaymath}
			and the critical mass $m_{0}^{n+1}$ is obtained as
			\begin{displaymath}
m_{0}^{n+1} = \rho^{n+1} - \cfrac{(2\pi T^{n+1})^{3/2}}{|\alpha|}\, \zeta(3/2) \, .
			\end{displaymath}
			\item For Fermi-Dirac case (2D or 3D), we test the equality
			\begin{displaymath}
|\alpha| \, \rho^{n+1}\, \left(\cfrac{d_{\mathbf{v}}}{4\pi e^{n+1}}\right)^{\frac{d_{\mathbf{v}}}{2}} = \left\{
\begin{array}{ll}
2 \, , & \textnormal{for 2D Fermi-Dirac case,} \\
\cfrac{5}{3}\,\sqrt{\cfrac{10}{\pi}}\, , &\textnormal{for 3D Fermi-Dirac case.}
\end{array}
\right.
			\end{displaymath}
			If it is true, then $z^{n+1} = +\infty$ and $T^{n+1} = 0$. If it is false, we cannot say anything about the values of $z^{n+1}$ and $T^{n+1}$.
		\end{itemize}

\item We compute an estimate of $\omega^{n+1}$:
\begin{itemize}
\item By using the temperature (quantum case $\alpha \neq 0$):
\begin{displaymath}
\omega^{n+1} = \sqrt{T^{n+1}} \, ,
\end{displaymath}
\item By using the kinetic energy (classical case $\alpha = 0$):
\begin{displaymath}
\omega^{n+1} = \sqrt{T^{n+1}+\cfrac{|\mathbf{u}^{n+1}|^{2}}{d_{\mathbf{v}}}} \, .
\end{displaymath}
\end{itemize}
\end{enumerate}

\item We compute $\left(\cfrac{\omega^{n+1}-\omega^{n}}{\Delta t^{n}\, \omega^{n}} \, \bm{\xi}_{\mathbf{j}} - \lambda\,\cfrac{\pi\,\omega^{n}}{L} \, \cfrac{\mathbf{u}^{n+1}-\mathbf{u}^{n}}{\Delta t^{n}} \right)\, G_{\mathbf{j}}^{n}$ for any $\mathbf{j} \in I_N$, then its Fourier modes, and finally get $\left(\nabla_{\bm{\xi}} \cdot \left(\cfrac{\omega^{n+1}-\omega^{n}}{\Delta t^{n}\, \omega^{n}} \, \bm{\xi} - \lambda\,\cfrac{\pi\,\omega^{n}}{L} \, \cfrac{\mathbf{u}^{n+1}-\mathbf{u}^{n}}{\Delta t^{n}} \right)\, G_{h}^{n}\right)_{\mathbf{j}}$ for any $\mathbf{j} \in I_N$.

\item We finally compute $G_{h}^{n+1}$ by solving
\begin{displaymath}
\begin{split}
\cfrac{G_{\mathbf{j}}^{n+1}-G_{\mathbf{j}}^{n}}{\Delta t^{n}} + \left(\nabla_{\bm{\xi}} \cdot \left(\cfrac{\omega^{n+1}-\omega^{n}}{\Delta t^{n}\, \omega^{n}} \, \bm{\xi} - \lambda\,\cfrac{\pi\,\omega^{n}}{L} \, \cfrac{\mathbf{u}^{n+1}-\mathbf{u}^{n}}{\Delta t^{n}} \right)\, G_{h}^{n}\right)_{\mathbf{j}} &\\
= c \mu\, \left(\cfrac{\pi\omega^{n}}{L}\right)^{-\gamma} \, \mathcal{Q}_{\alpha\mu(\frac{\pi\omega^{n}}{L})^{d_{\mathbf{v}}}}^{R}(G_{h}^{n})_{\mathbf{j}}& \, ,
\end{split}
\end{displaymath}
and  can reconstruct the approximation $f_{h}^{n+1}$ of $f(t^{n+1},\cdot)$ on $\Omega_{\mathbf{v}}^{n+1} = \mathbf{u}^{n+1} + \left[-\cfrac{L}{\omega^{n+1}},\cfrac{L}{\omega^{n+1}}\right]$ thanks to the relation
\[
f_{\mathbf{j}}^{n+1} = \mu \, \left(\cfrac{\pi\omega^{n+1}}{L}\right)^{d_{\mathbf{v}}}\, G_{\mathbf{j}}^{n+1} \, , \qquad \forall\, \mathbf{j} \in I_N \, .
\]

\item \underline{If we assume that $\omega \equiv 1$ and $\lambda = 0$ (velocity rescaling is not considered)}, we compute $\rho^{n+1}$, $\mathbf{u}^{n+1}$ and $e^{n+1}$ as follows:
\begin{displaymath}
\rho^{n+1} = \Delta v \, \sum_{\mathbf{j}\,\in\,I_N} f_{\mathbf{j}}^{n+1} = \Delta \xi \, \mu \, \sum_{\mathbf{j}\,\in\,I_N} G_{\mathbf{j}}^{n+1} \, ,
\end{displaymath}

\begin{displaymath}
\rho^{n+1}\mathbf{u}^{n+1} = \Delta v \, \sum_{\mathbf{j}\,\in\,I_N} \mathbf{v}_{\mathbf{j}}^{n+1} \, f_{\mathbf{j}}^{n+1} = \Delta \xi\, \mu\, \sum_{\mathbf{j}\,\in\,I_N} \cfrac{L}{\pi}\, \bm{\xi}_{\mathbf{j}} \, G_{\mathbf{j}}^{n+1} \, ,
\end{displaymath}

\begin{displaymath}
\rho^{n+1}e^{n+1} = \cfrac{\Delta v}{2} \, \sum_{\mathbf{j} \, \in \, I_N} |\mathbf{v}_{\mathbf{j}}^{n+1}-\mathbf{u}^{n+1}|^{2} \, f_{\mathbf{j}}^{n+1} = \cfrac{\Delta \xi}{2}\, \mu \sum_{\mathbf{j} \, \in \, I_N} \left| \cfrac{L}{\pi}\, \bm{\xi}_{\mathbf{j}} \right|^{2} \, G_{\mathbf{j}}^{n+1} \, .
\end{displaymath}
\end{enumerate}

It is also possible to consider a RK2 time semi-discretization by using cleverly some parts of the explicit Euler time loop above.

\section{Numerical results}
\label{sec:NumRes}

We shall now present some numerical simulations of our fast spectral method for the BNE. For a given discrete solution $f_h^n$ of the usual variables or $G_h^n$ in rescaled variables, we aim to compute the following quantities:

\begin{itemize}
\item The mass $\rho^{n}$, the averaged velocity $\mathbf{u}^{n}$, the kinetic energy $E_{c}^{n}$, and the internal energy $e^n$:
\[
\begin{aligned}
& \rho^{n} := \Delta v^{n} \, \sum_{\mathbf{j} \, \in \, I_{N}} f_{\mathbf{j}}^{n} = \Delta \xi\, \mu \, \sum_{\mathbf{j} \, \in \, I_{N}} G_{\mathbf{j}}^{n} \, ,
\\
&\mathbf{u}^{n} := \cfrac{1}{\rho^{n}} \, \Delta v^{n} \, \sum_{\mathbf{j} \, \in \, I_{N}} \mathbf{v}_{\mathbf{j}}^{n}\, f_{\mathbf{j}}^{n} = \cfrac{1}{\rho^{n}} \, \Delta\xi\,\mu\, \sum_{\mathbf{j} \, \in \, I_{N}}\left(\cfrac{L}{\pi\,\omega^{n}}\, \bm{\xi}_{\mathbf{j}} + \lambda\,\mathbf{u}^{n}\right) \, G_{\mathbf{j}}^{n} \, ,
\\
&E_{c}^{n} := \Delta v^{n} \, \sum_{\mathbf{j} \, \in \, I_{N}} |\mathbf{v}_{\mathbf{j}}^{n}|^{2}\, f_{\mathbf{j}}^{n} = \Delta\xi\,\mu\, \sum_{\mathbf{j} \, \in \, I_{N}}\left|\cfrac{L}{\pi\,\omega^{n}}\, \bm{\xi}_{\mathbf{j}} + \lambda\,\mathbf{u}^{n}\right|^{2} \, G_{\mathbf{j}}^{n} \, ,
\\
&e^{n} := \cfrac{1}{2\rho^{n}}\, \Delta v^{n} \, \sum_{\mathbf{j} \, \in \, I_{N}} |\mathbf{v}_{\mathbf{j}}^{n}-\mathbf{u}^{n}|^{2}\, f_{\mathbf{j}}^{n} = \cfrac{1}{2\rho^{n}}\,  \Delta\xi\,\mu\, \sum_{\mathbf{j} \, \in \, I_{N}}\left|\cfrac{L}{\pi\,\omega^{n}}\, \bm{\xi}_{\mathbf{j}} + (\lambda-1)\,\mathbf{u}^{n}\right|^{2} \, G_{\mathbf{j}}^{n} \, ;
\end{aligned}
\]

\item The stress-energy tensor $\T^{n} = (T_{d,d'}^{n})_{d,d'} \in \mathcal{M}_{d_{\mathbf{v}}}(\R)$:
\[
\begin{split}
T_{d,d'}^{n} &:= \cfrac{1}{\rho^{n}} \, \Delta v^{n} \, \sum_{\mathbf{j} \, \in \, I_{N}} (v_{j_{d},d}^{n}-u_{d}^{n})\, (v_{j_{d'},d'}^{n}-u_{d'}^{n})\, f_{\mathbf{j}}^{n} \\
&= \cfrac{1}{\rho^{n}} \,  \Delta\xi\,\mu\, \sum_{\mathbf{j} \, \in \, I_{N}}\left(\cfrac{L}{\pi\,\omega^{n}}\, \xi_{j_{d},d} + (\lambda-1)\,u_{d}^{n}\right) \, \left(\cfrac{L}{\pi\,\omega^{n}}\, \xi_{j_{d'},d'} + (\lambda-1)\,u_{d'}^{n}\right) \, G_{\mathbf{j}}^{n} \, ;
\end{split}
\]

\item {The entropy for the classical case $\mathcal{H}_{c}^{n}$:}
\[
\mathcal{H}_{c}^{n} := \Delta v^{n} \, \sum_{\mathbf{j} \, \in \, I_{N}} f_{\mathbf{j}}^{n} \, \ln(f_{\mathbf{j}}^{n}) =\mu\, \Delta \xi \, \sum_{\mathbf{j} \, \in \, I_{N}}G_{\mathbf{j}}^{n} \, \ln\left(\mu\, \left(\cfrac{\pi\,\omega^{n}}{L}\right)^{d_{\mathbf{v}}} \, G_{\mathbf{j}}^{n}\right) \, ;
\]

\item {The entropy for the quantum case $\mathcal{H}_{q}^{n}$:}
\[
\begin{split}
\mathcal{H}_{q}^{n} &:= \Delta v^{n} \, \sum_{\mathbf{j} \, \in \, I_{N}} \left[f_{\mathbf{j}}^{n} \, \ln(f_{\mathbf{j}}^{n})+\cfrac{1-\alpha\,f_{\mathbf{j}}^{n}}{\alpha}\, \ln\left(1-\alpha\,f_{\mathbf{j}}^{n}\right) \right] \\
&= \Delta \xi \, \mu\, \sum_{\mathbf{j} \, \in \, I_{N}} \Bigg[G_{\mathbf{j}}^{n} \, \ln\left(\mu\, \left(\cfrac{\pi\omega^{n}}{L}\right)^{d_{\mathbf{v}}} \, G_{\mathbf{j}}^{n}\right)\\
&\qquad \qquad +\cfrac{\mu^{-1}\, \left(\frac{\pi\omega^{n}}{L}\right)^{-d_{\mathbf{v}}}-\alpha\,G_{\mathbf{j}}^{n}}{\alpha}\, \ln\left(1-\alpha\,\mu\, \left(\cfrac{\pi\omega^{n}}{L}\right)^{d_{\mathbf{v}}} \, G_{\mathbf{j}}^{n}\right) \Bigg] \, ;
\end{split}
\]

\item {$\ell^{p}$ norms in space, for $p \in (1,\infty)$:}
\[
\|f_{h}^{n}\|_{\ell^{2}} = \left(\Delta v^{n} \, \sum_{\mathbf{j} \, \in \, I_{N}} |f_{\mathbf{j}}^{n}|^{p}\right)^{1/p} = \left(\Delta \xi \, \mu^{2}\, \left(\cfrac{\pi\,\omega^{n}}{L}\right)^{d_{\mathbf{v}}} \, \sum_{\mathbf{j} \, \in \, I_{N}} \left|G_{\mathbf{j}}^{n}\right|^{p}\right)^{1/p} \, .
\]
%
\end{itemize}

\subsection{Validation of spectral accuracy}
\label{sec:SpecAccu}

The first test sequence is dedicated to the validation of the spectral accuracy of the numerical methods presented in Section \ref{spectral_homogeneous} in the non-degenerative case. To do this, we proceed as follows:
\begin{enumerate}
\item Set the initial state $f^{0} = f(t=0,\cdot)$ as a candidate for the limit state;
\item Discretize it on a velocity grid to get $f_{h}^{0}$ and compute the discrete moments $\rho_{h}^{0}, \mathbf{u}_{h}^{0}, e_{h}^{0}$;
\item If the velocity rescaling method is used, we set $\mathbf{u}_{h}^{0} = \mathbf{u}^{0}$ where $\mathbf{u}^{0}$ is the exact mean velocity associated to $f^{0}$, and deduce the discrete temperature $T_{h}^{0}$ (and the discrete fugacity $z_{h}^{0}$ for the quantum case) and the discrete velocity scale $\omega_{h}^{0}$ (default value is $\omega_{h}^{0} = 1$);
\item Set $\rho^{\infty} = \rho_{h}^{0}$, $\mathbf{u}^{\infty} = \mathbf{u}^{0}$, $T^{\infty} = T_{h}^{0}$, $z^{\infty} = z_{h}^{0}$, $\omega^{\infty} = \omega_{h}^{0}$ and define the associated limit state $f^{\infty}$;
\item Discretize $f^{\infty}$ to get $f_{h}^{\infty}$.
\end{enumerate}
To highlight the spectral accuracy of our scheme, we compute the value of the numerical residual $\|\mathcal{Q}(f_{h}^{\infty})\|_{\ell^{\infty}}$ as the velocity grid is refined, over the simulation domain
\begin{displaymath}
\Omega_{h} = \lambda \, \mathbf{u}_{h}^{0} + \left[ -\cfrac{L}{\omega_{h}^{0}}, \cfrac{L}{\omega_{h}^{0}}\right] \, ,
\end{displaymath}
where $\lambda = 1$ if the velocity rescaling is activated and $\lambda = 0$ in the contrary case. We also choose the truncation parameters $R$ and $S$ such that
\begin{displaymath}
R = S \, , \qquad \pi = \cfrac{(3+\sqrt{2})\, S}{2} \, ,
\end{displaymath}
so the artificial truncation of $f^{0}$ depends on the value of $L$ provided by the user.

\subsubsection{2D quantum non-degenerative case}
For testing the accuracy of the method in the 2D quantum case, we select the maxwellian function $\mathcal{M}_{q}$ as the initial state. Since such tests were already performed in \cite{FilbetHuJin:2012}, we chose to focus on test cases involving $\hbar = 3$ for highlighting the quantum effects in the collision operator. In order to avoid Fermi-Dirac quantum degeneracy, we define $f^{0}$ as
\begin{equation*}
f^{0}(\mathbf{v}) = \cfrac{1}{|\alpha|} \, \cfrac{1}{z^{-1}\,\exp(\frac{|\mathbf{v}-\mathbf{u}^{0}|^{2}}{2\sigma}) + \textnormal{sgn}(\alpha)} \, , \quad
\alpha = \left\{
\begin{array}{ll}
+\hbar^{d_{\mathbf{v}}} \, , \quad & \textnormal{for Fermi-Dirac particles,} \\
-\hbar^{d_{\mathbf{v}}} \, , \quad & \textnormal{for Bose-Einstein particles,}
\end{array}
\right.
\end{equation*}
where $\sigma$ is set to 1 or 0.5. In addition, since we want to present the advantages of the velocity rescaling, we enrich the test list by considering 2 values of the initial average velocity $\mathbf{u}^{0}$:
\begin{equation*}
\rho^{0} = 1\, , \qquad \mathbf{u}^{0} = \mathbf{0} \, , \qquad \textnormal{or}\qquad \mathbf{u}^{0} = \mathbf{u}_{b} = \cfrac{8}{3\sqrt{2}+2} \, (1,1)^{T} \, ,
\end{equation*}
and several values of the limit $L$ of the simulation bounding box. \\
\indent According to Tables \ref{spectral_accuracy_fermions_2d_sigma05}-\ref{spectral_accuracy_bosons_2d_sigma1} where the numerical residual $\|\mathcal{Q}_{\alpha}^{R}(f_{h}^{\infty})\|_{\ell^{\infty}}$ is computed for various values of $L$, $\sigma$, $\mathbf{u}^{0}$ and grid sizes, it appears that our method is spectrally accurate if we consider a simulation bounding box with $L$ well suited to the moments $\mathbf{u}^{0}$ and $T^{0} = \sigma$. This is highlighted for the 2D Bose-Einstein case with small values of $\sigma$: indeed, such choice for $\sigma$ leads to a distribution $f^{0}$ that is very sharp and concentrated close to $\mathbf{u}^{0}$ (see the profiles $v_{x} \mapsto f^{0}(v_{x},0)$ in Figures \ref{profiles_bosons_2d_qmaxwell}). Consequently, the spectral accuracy becomes very difficult to observe in such configuration even with the velocity rescaling method (see Table \ref{spectral_accuracy_bosons_2d_sigma05}). \\
\indent In addition, we remark that the velocity rescaling method provides much smaller residuals  than the classical method for most cases that have been studied. This is not surprising for the case where $\mathbf{u}^{0} = \mathbf{u}_{b}$ because of the support $B(0,S)$ of the initial distribution that leads to the omission of a part of $f_{h}^{0}$. For the case where $\mathbf{u}^{0} = \mathbf{0}$, this can be explained by the fact that the rescaled distribution $G_{h}^{0}$ is exactly centered on $\mathbf{0}$ without any discretization error which is not the case for the classical method.

\begin{table}[ht]
\begin{tabular}{|c|c|c|c|c|c|}
\hline
\multirow{2}{*}{$L$} & \multirow{2}{*}{Grid} & \multicolumn{2}{|c|}{$\mathbf{u}^{0} = \mathbf{0}$} & \multicolumn{2}{|c|}{$\mathbf{u}^{0} = \mathbf{u}_{b}$} \\
\cline{3-6}
& & with rescaling & without rescaling & with rescaling & without rescaling \\
\hline
\multirow{5}{*}{\begin{tabular}{c}$L = 4.0$ \\ $\sigma = 0.5$\end{tabular}} & $16^{2}$ & 7.54518e-04 & 2.40084e-02 & 7.54292e-04 & 3.02826e-02\\
& $32^{2}$ & 5.45433e-06 & 2.94865e-04 & 6.33224e-06 & 1.61790e-04\\
& $64^{2}$ & 2.16212e-09 & 6.57029e-08 & 2.16212e-09 & 1.05112e-08\\
& $128^{2}$ & 8.72616e-15 & 2.25632e-09 & 7.69842e-15 & 3.79580e-10\\
& $256^{2}$ & 5.81693e-14 & 2.40773e-09 & 2.35945e-14 & 3.58093e-10\\
\hline
\multirow{5}{*}{\begin{tabular}{c}$L = 6.0$ \\ $\sigma = 0.5$\end{tabular}} & $16^{2}$ & 1.23417e-02 & 7.02218e-02 & 7.44099e-03 & 7.96784e-02\\
& $32^{2}$ & 2.52998e-04 & 8.61508e-04 & 3.77004e-04 & 1.77791e-03\\
& $64^{2}$ & 6.98180e-07 & 4.98332e-07 & 6.98180e-07 & 2.52066e-06\\
& $128^{2}$ & 9.92269e-12 & 5.42394e-13 & 1.04077e-11 & 3.52803e-12\\
& $256^{2}$ & 2.75075e-14 & 3.16561e-12 & 2.02706e-14 & 3.85932e-12\\
\hline
\multirow{5}{*}{\begin{tabular}{c}$L = 8.0$ \\ $\sigma = 0.5$\end{tabular}} & $16^{2}$ & 1.41403e-02 & 4.56558e-01 & 1.41403e-02 & 3.60279e-01\\
& $32^{2}$ & 1.28337e-03 & 1.18105e-02 & 1.29406e-03 & 1.31630e-02\\
& $64^{2}$ & 1.39047e-05 & 3.52678e-05 & 1.40178e-05 & 2.21314e-05\\
& $128^{2}$ & 1.76916e-09 & 2.23699e-10 & 2.07287e-09 & 3.34106e-10\\
& $256^{2}$ & 1.40392e-14 & 2.18273e-12 & 1.41762e-14 & 2.84017e-12\\
\hline
\multirow{5}{*}{\begin{tabular}{c}$L = 10.0$ \\ $\sigma = 0.5$\end{tabular}} & $16^{2}$ & 4.84180e-03 & 1.32821e+00 & 2.57422e-02 & 1.37525e+00\\
& $32^{2}$ & 2.46369e-03 & 3.09332e-02 & 2.63280e-03 & 2.59714e-02\\
& $64^{2}$ & 5.28147e-05 & 2.47577e-04 & 5.28147e-05 & 2.60236e-04\\
& $128^{2}$ & 6.16592e-08 & 3.62246e-08 & 1.22383e-07 & 3.34353e-08\\
& $256^{2}$ & 8.33644e-14 & 1.59568e-12 & 8.33644e-14 & 2.60240e-12\\
\hline
\end{tabular}
\caption{\textbf{Spectral accuracy.} Values of the numerical residual $\|\mathcal{Q}_{\alpha}^{R}(f_{h}^{\infty})\|_{\ell^{\infty}}$ for 2D Fermi-Dirac non-degenerative case with $\hbar = 3$, $\rho^{0} = 1$, and $\sigma = 0.5$ ($z = 16.5454$).} \label{spectral_accuracy_fermions_2d_sigma05}
\end{table}

\begin{table}
\begin{tabular}{|c|c|c|c|c|c|}
\hline
\multirow{2}{*}{$L$} & \multirow{2}{*}{Grid} & \multicolumn{2}{|c|}{$\mathbf{u}^{0} = \mathbf{0}$} & \multicolumn{2}{|c|}{$\mathbf{u}^{0} = \mathbf{u}_{b}$} \\
\cline{3-6}
& & with rescaling & without rescaling & with rescaling & without rescaling \\
\hline
\multirow{5}{*}{\begin{tabular}{c}$L = 4.0$ \\ $\sigma = 1.0$\end{tabular}} & $16^{2}$ & 2.95094e-04 & 2.50504e-03 & 2.49797e-04 & 9.54405e-04\\
& $32^{2}$ & 5.07180e-06 & 2.44424e-06 & 4.88457e-06 & 3.92317e-06\\
& $64^{2}$ & 1.84924e-09 & 5.91877e-07 & 1.36283e-09 & 2.04740e-07\\
& $128^{2}$ & 4.20809e-15 & 2.71389e-07 & 3.31905e-15 & 1.54671e-07\\
& $256^{2}$ & 1.18195e-14 & 6.37432e-07 & 1.17001e-14 & 2.68513e-07\\
\hline
\multirow{5}{*}{\begin{tabular}{c}$L = 6.0$ \\ $\sigma = 1.0$\end{tabular}} & $16^{2}$ & 1.31173e-04 & 8.42416e-03 & 1.23660e-04 & 1.45717e-02\\
& $32^{2}$ & 3.76824e-07 & 2.27804e-05 & 3.76824e-07 & 4.70450e-05\\
& $64^{2}$ & 8.21030e-12 & 9.24181e-09 & 8.21063e-12 & 2.59149e-09\\
& $128^{2}$ & 6.68304e-13 & 8.00724e-09 & 5.90658e-13 & 1.93910e-09\\
& $256^{2}$ & 7.16902e-13 & 7.14834e-09 & 7.17939e-13 & 2.18974e-09\\
\hline
\multirow{5}{*}{\begin{tabular}{c}$L = 8.0$ \\ $\sigma = 1.0$\end{tabular}} & $16^{2}$ & 3.15058e-04 & 2.80447e-02 & 2.33615e-04 & 6.62377e-02\\
& $32^{2}$ & 1.12128e-06 & 9.59214e-05 & 1.09318e-06 & 1.69072e-04\\
& $64^{2}$ & 5.98787e-11 & 4.44265e-09 & 5.64793e-11 & 1.26302e-08\\
& $128^{2}$ & 4.73110e-15 & 2.58596e-13 & 3.14199e-15 & 7.56339e-13\\
& $256^{2}$ & 1.85085e-14 & 1.10513e-12 & 1.18911e-14 & 1.60808e-12\\
\hline
\multirow{5}{*}{\begin{tabular}{c}$L = 10.0$ \\ $\sigma = 1.0$\end{tabular}} & $16^{2}$ & 3.13769e-03 & 2.99089e-01 & 2.17247e-03 & 2.12744e-01\\
& $32^{2}$ & 2.02320e-05 & 1.96588e-03 & 2.02320e-05 & 1.14376e-03\\
& $64^{2}$ & 3.89202e-09 & 2.44876e-07 & 2.57176e-09 & 2.99708e-07\\
& $128^{2}$ & 6.05016e-15 & 6.74139e-13 & 4.29173e-15 & 4.04056e-13\\
& $256^{2}$ & 1.26569e-14 & 1.08890e-12 & 1.25693e-14 & 1.41077e-12\\
\hline
\end{tabular}
\caption{\textbf{Spectral accuracy.} Values of the numerical residual $\|\mathcal{Q}_{\alpha}^{R}(f_{h}^{\infty})\|_{\ell^{\infty}}$  for 2D Fermi-Dirac non-degenerative case with $\hbar = 3$, $\rho^{0} = 1$ and $\sigma = 1$ ($z = 3.1887$).} \label{spectral_accuracy_fermions_2d_sigma1}
\end{table}

\begin{table}
\begin{tabular}{|c|c|c|c|c|c|}
\hline
\multirow{2}{*}{$L$} & \multirow{2}{*}{Grid} & \multicolumn{2}{|c|}{$\mathbf{u}^{0} = \mathbf{0}$} & \multicolumn{2}{|c|}{$\mathbf{u}^{0} = \mathbf{u}_{b}$} \\
\cline{3-6}
& & with rescaling & without rescaling & with rescaling & without rescaling \\
\hline
\multirow{5}{*}{\begin{tabular}{c}$L = 4.0$ \\ $\sigma = 0.5$\end{tabular}} & $16^{2}$ & 1.26588e+02 & 1.10944e+03 & 1.59361e+02 & 6.13676e+01\\
& $32^{2}$ & 2.25994e+00 & 4.33543e+01 & 2.24858e+00 & 1.25675e+02\\
& $64^{2}$ & 6.00369e-02 & 1.86373e+00 & 6.00369e-02 & 7.76987e+00\\
& $128^{2}$ & 8.25844e-04 & 7.93633e-03 & 8.25844e-04 & 1.82181e-01\\
& $256^{2}$ & 6.20715e-08 & 4.30685e-08 & 5.94336e-08 & 1.62130e-05\\
\hline
\multirow{5}{*}{\begin{tabular}{c}$L = 6.0$ \\ $\sigma = 0.5$\end{tabular}} & $16^{2}$ & 2.46280e+05 & 2.34093e+04 & 2.54047e+05 & 3.67843e+02\\
& $32^{2}$ & 1.38535e+01 & 2.68651e+02 & 4.19147e+00 & 9.30524e+00\\
& $64^{2}$ & 6.82833e-01 & 7.49992e+00 & 6.81247e-01 & 1.41910e+01\\
& $128^{2}$ & 1.99389e-02 & 1.31040e-01 & 1.07615e-02 & 1.72672e-01\\
& $256^{2}$ & 1.50878e-05 & 3.03279e-05 & 2.88911e-05 & 6.28249e-05\\
\hline
\end{tabular}
\caption{\textbf{Spectral accuracy.} Values of the numerical residual $\|\mathcal{Q}_{\alpha}^{R}(f_{h}^{\infty})\|_{\ell^{\infty}}$ for 2D Bose-Einstein case with $\hbar = 3$, $\rho^{0} = 1$ and $\sigma = 0.5$ ($z = 0.943$).} \label{spectral_accuracy_bosons_2d_sigma05}
\end{table}

\begin{table}
\begin{tabular}{|c|c|c|c|c|c|}
\hline
\multirow{2}{*}{$L$} & \multirow{2}{*}{Grid} & \multicolumn{2}{|c|}{$\mathbf{u}^{0} = \mathbf{0}$} & \multicolumn{2}{|c|}{$\mathbf{u}^{0} = \mathbf{u}_{b}$} \\
\cline{3-6}
& & with rescaling & without rescaling & with rescaling & without rescaling \\
\hline
\multirow{5}{*}{\begin{tabular}{c}$L = 4.0$ \\ $\sigma = 1.0$\end{tabular}} & $16^{2}$ & 8.75850e-02 & 4.77941e+00 & 8.75850e-02 & 2.09808e+00\\
& $32^{2}$ & 4.34564e-03 & 1.34244e-01 & 4.34564e-03 & 6.84172e-02\\
& $64^{2}$ & 6.97576e-06 & 7.27236e-05 & 3.31081e-06 & 1.13026e-04\\
& $128^{2}$ & 8.18675e-10 & 8.90710e-06 & 1.14691e-09 & 2.36536e-05\\
& $256^{2}$ & 9.06688e-10 & 8.27488e-06 & 9.06688e-10 & 2.52684e-05\\
\hline
\multirow{5}{*}{\begin{tabular}{c}$L = 6.0$ \\ $\sigma = 1.0$\end{tabular}} & $16^{2}$ & 8.00293e-02 & 3.30798e+00 & 9.34962e-02 & 5.15603e+00\\
& $32^{2}$ & 3.39670e-03 & 2.43273e-01 & 1.14900e-03 & 3.48709e-01\\
& $64^{2}$ & 8.44540e-06 & 2.84543e-04 & 4.22200e-06 & 5.26459e-04\\
& $128^{2}$ & 2.13457e-11 & 5.45586e-09 & 1.90767e-11 & 2.75652e-08\\
& $256^{2}$ & 1.24522e-11 & 2.90246e-09 & 1.98063e-11 & 2.91932e-08\\
\hline
\multirow{5}{*}{\begin{tabular}{c}$L = 8.0$ \\ $\sigma = 1.0$\end{tabular}} & $16^{2}$ & 8.59697e-02 & 8.09303e+00 & 8.59697e-02 & 7.84054e+00\\
& $32^{2}$ & 3.20116e-03 & 3.22578e-01 & 1.01646e-02 & 7.22433e-01\\
& $64^{2}$ & 1.89958e-05 & 3.67449e-03 & 3.76830e-05 & 2.61653e-03\\
& $128^{2}$ & 1.25941e-09 & 8.52464e-08 & 1.60801e-09 & 2.33918e-07\\
& $256^{2}$ & 8.48209e-14 & 4.54954e-12 & 4.54874e-14 & 7.25384e-12\\
\hline
\end{tabular}

\caption{\textbf{Spectral accuracy.} Values of the numerical residual $\|\mathcal{Q}_{\alpha}^{R}(f_{h}^{\infty})\|_{\ell^{\infty}}$  for 2D Bose-Einstein case with $\hbar = 3$, $\rho^{0} = 1$ and $\sigma = 0.5$ ($z = 0.943$) and $\sigma = 1$ ($z = 0.7613$).} \label{spectral_accuracy_bosons_2d_sigma1}
\end{table}

\begin{figure}
\begin{center}
\begin{tabular}{cc}
 \includegraphics[width=.48\textwidth]{./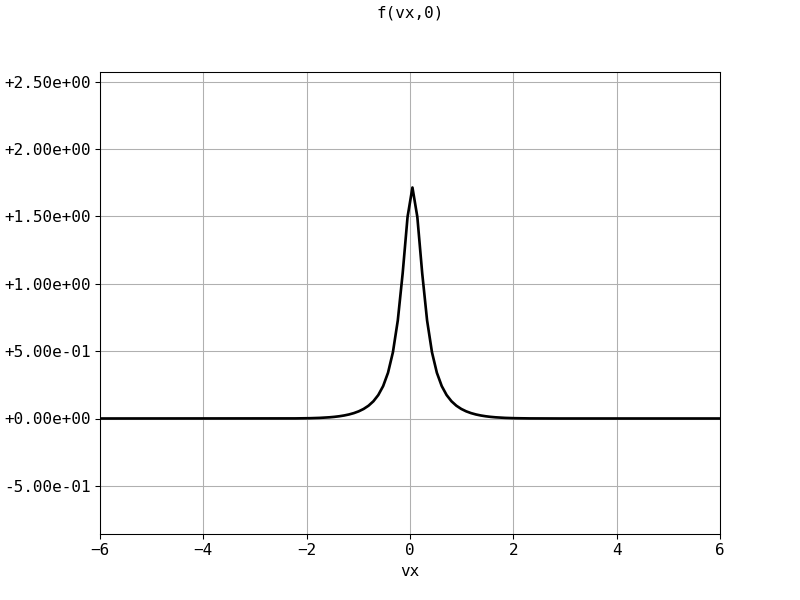}&
\includegraphics[width=.48\textwidth]{./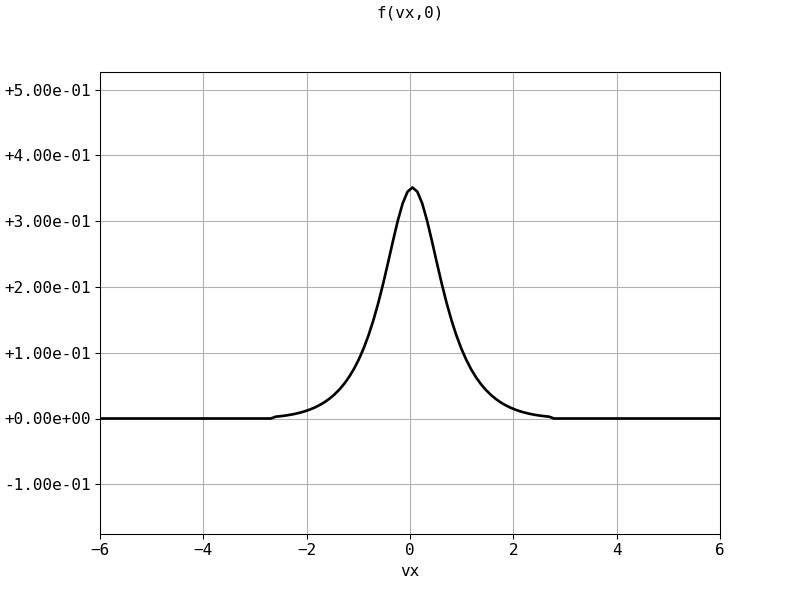}  \\
(a) & (b)
\end{tabular}
\end{center}
\caption{Profiles $v_{x} \mapsto \mathcal{M}_{q}(v_{x},0)$ for 2D Bose-Einstein particles with $\sigma = 0.5$ (a) and $\sigma = 1$ (b). For both profiles, we considered $\hbar = 3$, $\rho^{0} = 1$ and $\mathbf{u}^{0} = \mathbf{0}$. } \label{profiles_bosons_2d_qmaxwell}
\end{figure}

\subsubsection{3D quantum non-degenerative case}
We then perform a similar test sequence for the 3D non-degenerative quantum case. More precisely, for the Fermi-Dirac case, we chose the following parameters:
\begin{displaymath}
\rho^{0} = 1\, , \qquad \mathbf{u}^{0} = \mathbf{0} \quad \textnormal{or}\quad \mathbf{u}^{0} = \mathbf{u}_{b} = \frac{4}{3+\sqrt{2}}\, \left( 1, 1, \sqrt{2} \right)^{T} \, , \qquad \sigma \in \{ 0.5, 1\} \, ,
\end{displaymath}
and for the Bose-Einstein case, we chose
\begin{displaymath}
\mathbf{u}^{0} = \mathbf{0} \quad \textnormal{or}\quad \mathbf{u}^{0} = \mathbf{u}_{b} = \frac{4}{3+\sqrt{2}}\, \left( 1, 1, \sqrt{2} \right)^{T} \, , \qquad \sigma \in \{ 0.5, 1\} \, .
\end{displaymath}
In order to avoid Bose-Einstein condensation (cases with $m_{0} > 0$), we chose $\rho^{0} = 0.5$ in Tables \ref{spectral_accuracy_bosons_3d_sigma05_rho05}-\ref{spectral_accuracy_bosons_3d_sigma1_rho05} and $\rho^{0} = 0.2$ in Tables \ref{spectral_accuracy_bosons_3d_sigma05_rho02}-\ref{spectral_accuracy_bosons_3d_sigma1_rho02}. \\
\indent Since the computational cost is multiplied by 64 each time we multiply by 2 the number of grid points in each direction, we skipped the tests for $128^{3}$ and $256^{3}$ grids (see Tables \ref{spectral_accuracy_fermions_3d_sigma05}-\ref{spectral_accuracy_fermions_3d_sigma1}-\ref{spectral_accuracy_bosons_3d_sigma05_rho05}-\ref{spectral_accuracy_bosons_3d_sigma1_rho05}-\ref{spectral_accuracy_bosons_3d_sigma05_rho02}-\ref{spectral_accuracy_bosons_3d_sigma1_rho02}). \\
\indent Even without these highly refined grids, we can formulate some conclusions that complete those we have made for the 2D non-degenerative case: if we consider a case that is reasonably far from quantum degeneracy (Tables \ref{spectral_accuracy_fermions_3d_sigma05}-\ref{spectral_accuracy_fermions_3d_sigma1}-\ref{spectral_accuracy_bosons_3d_sigma05_rho02}-\ref{spectral_accuracy_bosons_3d_sigma1_rho02}), and up to a good choice for $L$ according to the prescribed values of $\rho^{0}$, $\mathbf{u}^{0}$ and $\sigma$, both classical and velocity rescaling methods are spectrally accurate when the velocity grid is refined. In addition, the velocity rescaling method provides smaller residuals $\|\mathcal{Q}_{\alpha}^{R}(f_{h}^{\infty})\|_{\ell^{\infty}}$ than the classical method for almost all tests. This last remark will be very useful for future 3D tests since it allows to reduce the size of velocity grid and, consequently, the computational cost of the 3D time-space numerical scheme. \\
\indent We can formulate an additional conclusion that is specific to the test case close to Bose-Einstein condensation (see Tables \ref{spectral_accuracy_bosons_3d_sigma05_rho05}-\ref{spectral_accuracy_bosons_3d_sigma1_rho05}): in such case, the discrete distribution $f_{h}$ is almost singular in a neighborhood of $\mathbf{u}^{0}$, so the computation of $\|\mathcal{Q}_{\alpha}^{R}(f_{h}^{\infty})\|_{\ell^{\infty}}$ may produce \texttt{NaN} values. This phenomenon is particularly significant when we set $\mathbf{u}^{0} = \mathbf{0}$ or when we activate the velocity rescaling method. Indeed, in such case, $\bm{\xi} = \mathbf{0}$ is a grid point and $G_{h}(\mathbf{0})$ becomes very large. The only studied way to bypass this problem is to deactivate the velocity rescaling method and to choose $\mathbf{u}^{0}$ outside of the velocity grid points.

\begin{table}
\begin{tabular}{|c|c|c|c|c|c|}
\hline
\multirow{2}{*}{$L$} & \multirow{2}{*}{Grid} & \multicolumn{2}{|c|}{$\mathbf{u}^{0} = \mathbf{0}$} & \multicolumn{2}{|c|}{$\mathbf{u}^{0} = \mathbf{u}_{b}$} \\
\cline{3-6}
& & with rescaling & without rescaling & with rescaling & without rescaling \\
\hline
\multirow{3}{*}{\begin{tabular}{c}$L = 4.0$ \\ $\sigma = 0.5$\end{tabular}} & $16^{3}$ & 6.65068e-04 & 2.64044e-01 & 6.65068e-04 & 4.38601e-02\\
& $32^{3}$ & 1.38282e-05 & 3.51620e-03 & 1.38282e-05 & 3.47688e-04\\
& $64^{3}$ & 4.02472e-09 & 3.11989e-06 & 4.02472e-09 & 8.48441e-08\\
\hline
\multirow{3}{*}{\begin{tabular}{c}$L = 6.0$ \\ $\sigma = 0.5$\end{tabular}} & $16^{3}$ & 1.46161e-02 & 4.93491e-01 & 1.46161e-02 & 6.27339e-01\\
& $32^{3}$ & 4.64216e-04 & 1.27742e-02 & 4.64216e-04 & 1.44628e-02\\
& $64^{3}$ & 5.17232e-07 & 3.23812e-06 & 5.17232e-07 & 1.34607e-05\\
\hline
\multirow{3}{*}{\begin{tabular}{c}$L = 8.0$ \\ $\sigma = 0.5$\end{tabular}} & $16^{3}$ & 6.86816e-02 & 3.05672e+00 & 6.86816e-02 & 3.86836e+00\\
& $32^{3}$ & 3.63615e-03 & 1.49763e-01 & 3.63615e-03 & 1.29305e-01\\
& $64^{3}$ & 2.29417e-05 & 3.12185e-04 & 1.55549e-05 & 2.64884e-04\\
\hline
\multirow{3}{*}{\begin{tabular}{c}$L = 10.0$ \\ $\sigma = 0.5$\end{tabular}} & $16^{3}$ & 6.02018e-02 & 5.16432e+01 & 6.02018e-02 & 3.06977e+01\\
& $32^{3}$ & 1.57951e-02 & 9.41582e-01 & 1.57951e-02 & 6.93264e-01\\
& $64^{3}$ & 2.09198e-04 & 6.20599e-03 & 2.09198e-04 & 5.89099e-03\\
\hline
\end{tabular}

\caption{\textbf{Spectral accuracy.} Values of the numerical residual $\|\mathcal{Q}_{\alpha}^{R}(f_{h}^{\infty})\|_{\ell^{\infty}}$ for 3D Fermi-Dirac non-degenerative case with $\hbar = 3$, $\rho^{0} = 1$ and $\sigma = 0.5$ ($z = 2.43228e+01$).} \label{spectral_accuracy_fermions_3d_sigma05}
\end{table}

\begin{table}
\begin{tabular}{|c|c|c|c|c|c|}
\hline
\multirow{2}{*}{$L$} & \multirow{2}{*}{Grid} & \multicolumn{2}{|c|}{$\mathbf{u}^{0} = \mathbf{0}$} & \multicolumn{2}{|c|}{$\mathbf{u}^{0} = \mathbf{u}_{b}$} \\
\cline{3-6}
& & with rescaling & without rescaling & with rescaling & without rescaling \\
\hline
\multirow{3}{*}{\begin{tabular}{c}$L = 4.0$ \\ $\sigma = 1.0$\end{tabular}} & $16^{3}$ & 9.28576e-04 & 4.02042e-02 & 9.28576e-04 & 4.32287e-03\\
& $32^{3}$ & 5.11585e-05 & 8.89902e-05 & 5.11585e-05 & 9.91060e-06\\
& $64^{3}$ & 2.93880e-08 & 1.19765e-07 & 2.27659e-08 & 1.72883e-07\\
\hline
\multirow{3}{*}{\begin{tabular}{c}$L = 6.0$ \\ $\sigma = 1.0$\end{tabular}} & $16^{3}$ & 3.60402e-04 & 8.95561e-02 & 3.60402e-04 & 1.18694e-01\\
& $32^{3}$ & 9.32472e-07 & 1.70466e-04 & 9.32472e-07 & 2.05708e-04\\
& $64^{3}$ & 4.40082e-11 & 2.41131e-08 & 3.46700e-11 & 7.59905e-09\\
\hline
\multirow{3}{*}{\begin{tabular}{c}$L = 8.0$ \\ $\sigma = 1.0$\end{tabular}} & $16^{3}$ & 4.72058e-04 & 4.90556e-01 & 4.72058e-04 & 2.76318e-01\\
& $32^{3}$ & 1.79828e-06 & 1.73790e-03 & 1.79828e-06 & 2.09096e-03\\
& $64^{3}$ & 5.38083e-11 & 5.71195e-08 & 5.82632e-11 & 1.47296e-07\\
\hline
\multirow{3}{*}{\begin{tabular}{c}$L = 10.0$ \\ $\sigma = 1.0$\end{tabular}} & $16^{3}$ & 6.82783e-03 & 6.30273e+00 & 6.82783e-03 & 4.16489e+00\\
& $32^{3}$ & 2.16863e-05 & 2.08866e-02 & 2.16863e-05 & 9.24323e-03\\
& $64^{3}$ & 2.20701e-09 & 4.38071e-06 & 2.20701e-09 & 4.40874e-06\\
\hline
\end{tabular}
\caption{\textbf{Spectral accuracy.} Values of the numerical residual $\|\mathcal{Q}_{\alpha}^{R}(f_{h}^{\infty})\|_{\ell^{\infty}}$ for 3D Fermi-Dirac non-degenerative case with $\hbar = 3$, $\rho^{0} = 1$ and $\sigma = 1$ ($z = 3.09922$).} \label{spectral_accuracy_fermions_3d_sigma1}
\end{table}

\begin{table}
\begin{tabular}{|c|c|c|c|c|c|}
\hline
\multirow{2}{*}{$L$} & \multirow{2}{*}{Grid} & \multicolumn{2}{|c|}{$\mathbf{u}^{0} = \mathbf{0}$} & \multicolumn{2}{|c|}{$\mathbf{u}^{0} = \mathbf{u}_{b}$} \\
\cline{3-6}
& & with rescaling & without rescaling & with rescaling & without rescaling \\
\hline
\multirow{3}{*}{\begin{tabular}{c}$L = 4.0$ \\ $\sigma = 0.5$\end{tabular}} & $16^{3}$ & - & - & - & 2.01861e+01\\
& $32^{3}$ & - & - & - & 3.07997e+02\\
& $64^{3}$ & - & - & - & 3.30064e+01\\
\hline
\multirow{3}{*}{\begin{tabular}{c}$L = 6.0$ \\ $\sigma = 0.5$\end{tabular}} & $16^{3}$ & - & - & - & 1.33886e+03\\
& $32^{3}$ & - & - & - & 5.39380e+02\\
& $64^{3}$ & - & - & - & 1.83015e+02\\
\hline
\multirow{3}{*}{\begin{tabular}{c}$L = 8.0$ \\ $\sigma = 0.5$\end{tabular}} & $16^{3}$ & - & - & - & 2.09609e+04\\
& $32^{3}$ & - & - & - & 3.26936e+02\\
& $64^{3}$ & - & - & - & 1.87903e+02\\
\hline
\multirow{3}{*}{\begin{tabular}{c}$L = 10.0$ \\ $\sigma = 0.5$\end{tabular}} & $16^{3}$ & - & - & - & 5.58828e+03\\
& $32^{3}$ & - & - & - & 2.73300e+01\\
& $64^{3}$ & - & - & - & 5.21282e+04\\
\hline
\end{tabular}

\caption{\textbf{Spectral accuracy.} Values of the numerical residual $\|\mathcal{Q}_{\alpha}^{R}(f_{h}^{\infty})\|_{\ell^{\infty}}$ for the 3D Bose-Einstein non-degenerative case with $\hbar = 3$, $\rho^{0} = 0.5$ and $\sigma = 0.5$ ($z = 0.99706$).} \label{spectral_accuracy_bosons_3d_sigma05_rho05}
\end{table}

\begin{table}
\begin{tabular}{|c|c|c|c|c|c|}
\hline
\multirow{2}{*}{$L$} & \multirow{2}{*}{Grid} & \multicolumn{2}{|c|}{$\mathbf{u}^{0} = \mathbf{0}$} & \multicolumn{2}{|c|}{$\mathbf{u}^{0} = \mathbf{u}_{b}$} \\
\cline{3-6}
& & with rescaling & without rescaling & with rescaling & without rescaling \\
\hline
\multirow{3}{*}{\begin{tabular}{c}$L = 4.0$ \\ $\sigma = 1.0$\end{tabular}} & $16^{3}$ & 4.85493e-02 & 1.23198e+01 & 4.85493e-02 & 6.20351e-01\\
& $32^{3}$ & 6.60721e-04 & 6.57365e-02 & 6.60721e-04 & 1.40535e-03\\
& $64^{3}$ & 4.36371e-07 & 1.63056e-05 & 4.36373e-07 & 2.17488e-06\\
\hline
\multirow{3}{*}{\begin{tabular}{c}$L = 6.0$ \\ $\sigma = 1.0$\end{tabular}} & $16^{3}$ & 1.52861e-02 & 1.02566e+01 & 1.52861e-02 & 9.62265e+00\\
& $32^{3}$ & 7.38435e-05 & 3.75522e-02 & 7.38434e-05 & 1.13989e-01\\
& $64^{3}$ & 1.24358e-08 & 4.78434e-06 & 1.24358e-08 & 4.57443e-05\\
\hline
\multirow{3}{*}{\begin{tabular}{c}$L = 8.0$ \\ $\sigma = 1.0$\end{tabular}} & $16^{3}$ & 3.53206e-02 & 3.20586e+01 & 3.53206e-02 & 3.08672e+01\\
& $32^{3}$ & 2.16009e-04 & 1.93870e-01 & 2.16009e-04 & 1.46767e-01\\
& $64^{3}$ & 5.59894e-08 & 4.92734e-05 & 5.59894e-08 & 1.52454e-04\\
\hline
\multirow{3}{*}{\begin{tabular}{c}$L = 10.0$ \\ $\sigma = 1.0$\end{tabular}} & $16^{3}$ & 1.59747e-01 & 1.41633e+02 & 1.59740e-01 & 4.01087e+01\\
& $32^{3}$ & 1.34162e-03 & 1.28551e+00 & 1.34162e-03 & 2.69977e-01\\
& $64^{3}$ & 8.77263e-07 & 8.40196e-04 & 8.77263e-07 & 1.13530e-03\\
\hline
\end{tabular}

\caption{\textbf{Spectral accuracy.} Values of the numerical residual $\|\mathcal{Q}_{\alpha}^{R}(f_{h}^{\infty})\|_{\ell^{\infty}}$ for the  3D Bose-Einstein non-degenerative case with $\hbar = 3$, $\rho^{0} = 0.5$ and $\sigma = 1$ ($z = 0.63071$).} \label{spectral_accuracy_bosons_3d_sigma1_rho05}
\end{table}

\begin{table}
\begin{tabular}{|c|c|c|c|c|c|}
\hline
\multirow{2}{*}{$L$} & \multirow{2}{*}{Grid} & \multicolumn{2}{|c|}{$\mathbf{u}^{0} = \mathbf{0}$} & \multicolumn{2}{|c|}{$\mathbf{u}^{0} = \mathbf{u}_{b}$} \\
\cline{3-6}
& & with rescaling & without rescaling & with rescaling & without rescaling \\
\hline
\multirow{3}{*}{\begin{tabular}{c}$L = 4.0$ \\ $\sigma = 0.5$\end{tabular}} & $16^{3}$ & 1.89242e-02 & 5.59497e+00 & 1.89242e-02 & 6.51227e-01\\
& $32^{3}$ & 1.54426e-04 & 3.17405e-02 & 1.54426e-04 & 2.91398e-03\\
& $64^{3}$ & 6.62168e-08 & 7.87246e-06 & 6.62168e-08 & 2.68118e-06\\
\hline
\multirow{3}{*}{\begin{tabular}{c}$L = 6.0$ \\ $\sigma = 0.5$\end{tabular}} & $16^{3}$ & 6.16653e-01 & 2.43214e+01 & 6.16653e-01 & 9.89261e+00\\
& $32^{3}$ & 3.25459e-03 & 2.18171e-01 & 3.25459e-03 & 4.97853e-01\\
& $64^{3}$ & 7.62635e-06 & 1.16549e-04 & 7.62635e-06 & 7.32149e-04\\
\hline
\multirow{3}{*}{\begin{tabular}{c}$L = 8.0$ \\ $\sigma = 0.5$\end{tabular}} & $16^{3}$ & 2.04123e+03 & 2.46424e+02 & 2.04123e+03 & 7.08188e+01\\
& $32^{3}$ & 2.43393e-02 & 2.03212e+00 & 2.43393e-02 & 6.57090e-01\\
& $64^{3}$ & 1.51689e-04 & 3.61768e-03 & 1.51689e-04 & 2.87658e-03\\
\hline
\multirow{3}{*}{\begin{tabular}{c}$L = 10.0$ \\ $\sigma = 0.5$\end{tabular}} & $16^{3}$ & - & 7.36733e+03 & - & 1.07883e+02\\
& $32^{3}$ & 1.38333e-01 & 9.78043e+00 & 1.38333e-01 & 7.43831e-01\\
& $64^{3}$ & 1.03698e-03 & 4.39372e-02 & 1.03698e-03 & 4.01027e-02\\
\hline
\end{tabular}

\caption{\textbf{Spectral accuracy.} Values of the numerical residual $\|\mathcal{Q}_{\alpha}^{R}(f_{h}^{\infty})\|_{\ell^{\infty}}$ for the  3D Bose-Einstein non-degenerative case with $\hbar = 3$, $\rho^{0} = 0.2$ and $\sigma = 0.5$ ($z = 0.68499$).} \label{spectral_accuracy_bosons_3d_sigma05_rho02}
\end{table}

\begin{table}
\begin{tabular}{|c|c|c|c|c|c|}
\hline
\multirow{2}{*}{$L$} & \multirow{2}{*}{Grid} & \multicolumn{2}{|c|}{$\mathbf{u}^{0} = \mathbf{0}$} & \multicolumn{2}{|c|}{$\mathbf{u}^{0} = \mathbf{u}_{b}$} \\
\cline{3-6}
& & with rescaling & without rescaling & with rescaling & without rescaling \\
\hline
\multirow{3}{*}{\begin{tabular}{c}$L = 4.0$ \\ $\sigma = 1.0$\end{tabular}} & $16^{3}$ & 3.74496e-04 & 2.77192e-02 & 3.74496e-04 & 5.36638e-03\\
& $32^{3}$ & 4.12493e-07 & 5.76615e-06 & 4.12493e-07 & 7.85561e-07\\
& $64^{3}$ & 6.86211e-12 & 3.46635e-07 & 6.86211e-12 & 1.63647e-07\\
\hline
\multirow{3}{*}{\begin{tabular}{c}$L = 6.0$ \\ $\sigma = 1.0$\end{tabular}} & $16^{3}$ & 1.49832e-04 & 7.65264e-02 & 1.49832e-04 & 1.11485e-01\\
& $32^{3}$ & 4.51218e-08 & 1.52468e-05 & 4.51218e-08 & 6.22715e-05\\
& $64^{3}$ & 2.65642e-13 & 1.00237e-09 & 2.65642e-13 & 4.21875e-09\\
\hline
\multirow{3}{*}{\begin{tabular}{c}$L = 8.0$ \\ $\sigma = 1.0$\end{tabular}} & $16^{3}$ & 5.11241e-04 & 4.59837e-01 & 5.11241e-04 & 5.35585e-01\\
& $32^{3}$ & 3.22287e-07 & 2.73180e-04 & 3.22287e-07 & 4.08919e-04\\
& $64^{3}$ & 3.36264e-12 & 2.79570e-09 & 3.36264e-12 & 9.71734e-09\\
\hline
\multirow{3}{*}{\begin{tabular}{c}$L = 10.0$ \\ $\sigma = 1.0$\end{tabular}} & $16^{3}$ & 2.78127e-03 & 2.60318e+00 & 2.78127e-03 & 1.18199e+00\\
& $32^{3}$ & 5.03331e-06 & 4.81342e-03 & 5.03331e-06 & 1.81840e-03\\
& $64^{3}$ & 2.03119e-10 & 1.93959e-07 & 2.03099e-10 & 2.62032e-07\\
\hline
\end{tabular}

\caption{\textbf{Spectral accuracy.} Values of the numerical residual $\|\mathcal{Q}_{\alpha}^{R}(f_{h}^{\infty})\|_{\ell^{\infty}}$ for the  3D Bose-Einstein non-degenerative case with $\hbar = 3$, $\rho^{0} = 0.2$ and $\sigma = 1$ ($z = 0.30354$).} \label{spectral_accuracy_bosons_3d_sigma1_rho02}
\end{table}

\subsection{Relaxation towards equilibrium}
\label{sec:NumTrend}

We have seen in Section \ref{sec:stationary_state} that the distribution function $f(t,\cdot)$ is expected to converges to an equilibrium state $f^{\infty}$ as $t \to +\infty$ . We present here some tests that highlight this phenomenon for all treated cases and  provide a numerical validation of the so-called \textit{Fermi-Dirac saturation} 
and \textit{Bose-Einstein condensation}. 
For this, let us associate to a given discrete initial distribution $f^{0}_h$ (and its related macroscopic quantities) a threshold $\hbar^{*}$ defined as
\begin{equation*}
\hbar^{*} = \left[\left(\cfrac{4\pi e_{h}^{0}}{d_{v}}\right)^{\frac{d_{\mathbf{v}}}{2}} \, \cfrac{1}{\rho_{h}^{0}}\right]^{\frac{1}{d_{\mathbf{v}}}}
\end{equation*}
According to Section \ref{sec:stationary_state}, we expect  the following behavior of the discrete distribution $f_{h}$ as $t \to +\infty$:
\begin{itemize}
\item \underline{2D Bose-Einstein particles:} $f_{h}$ converges to the quantum maxwellian defined as
\begin{displaymath}
f^{\infty}(\mathbf{v}) = \mathcal{M}_{q}(\mathbf{v}) = \cfrac{1}{\hbar^{2}}\, \cfrac{1}{\frac{1}{z_{h}^{0}}\,\exp\left(\frac{|\mathbf{v}-\mathbf{u}_{h}^{0}|^{2}}{2T_{h}^{0}}\right) - 1} \, ,
\end{displaymath}
where $(z_{h}^{0},T_{h}^{0})$ solves
\begin{displaymath}
\left\{
\begin{array}{rcl}
\rho_{h}^{0} &=& \cfrac{2\pi T_{h}^{0}}{\hbar^{2}}\, B_{1}(z_{h}^{0}) \, , \\
e_{h}^{0} &=& T_{h}^{0}\, \cfrac{B_{2}(z_{h}^{0})}{B_{1}(z_{h}^{0})} \, ,
\end{array}
\right.
\end{displaymath}

\item \underline{3D Bose-Einstein particles:} $f_{h}$ converges to the limit state $f^{\infty}$ defined as
\begin{displaymath}
f^{\infty}(\mathbf{v}) = \left\{
\begin{array}{ll}
\mathcal{M}_{q}(\mathbf{v}) \, , & \textnormal{if } m_{0,h} = \rho_{h}^{0} - \cfrac{1}{\hbar^{3}}\, \left(\cfrac{4\pi\,e_{h}^{0}}{3}\right)^{3/2} \, \cfrac{\zeta(3/2)^{5/2}}{\zeta(5/2)^{3/2}} \leq 0, \\
\widetilde{M_{q}}(\mathbf{v}) \, , &  \textnormal{else,}

\end{array}
\right.
\end{displaymath}
where, in the former case,
\begin{displaymath}
\mathcal{M}_{q}(\mathbf{v}) = \cfrac{1}{\hbar^{3}}\, \cfrac{1}{\frac{1}{z_{h}^{0}}\,\exp\left(\frac{|\mathbf{v}-\mathbf{u}_{h}^{0}|^{2}}{2T_{h}^{0}}\right) - 1} \, ,
\end{displaymath}
with $(z_{h}^{0},T_{h}^{0})$ solving
\begin{displaymath}
\left\{
\begin{array}{rcl}
\rho_{h}^{0} &=& \cfrac{(2\pi T_{h}^{0})^{3/2}}{\hbar^{3}}\, B_{3/2}(z_{h}^{0}) \, , \\
e_{h}^{0} &=& \cfrac{3T_{h}^{0}}{2}\, \cfrac{B_{5/2}(z_{h}^{0})}{B_{3/2}(z_{h}^{0})} \, ,
\end{array}
\right.
\end{displaymath}
and, in the latter case,
\begin{displaymath}
\widetilde{\mathcal{M}_{q}}(\mathbf{v}) = m_{0,h}\,\delta_{0}(\mathbf{v}-\mathbf{u}_{h}^{0}) + \cfrac{1}{\hbar^{3}} \, \cfrac{1}{\exp\left(\frac{|\mathbf{v}-\mathbf{u}_{h}^{0}|^{2}}{2T_{h}^{0}}\right) - 1} \, , \quad T_{h}^{0} = \cfrac{2\,e_{h}^{0}\,\zeta(3/2)}{3\zeta(5/2)}.
\end{displaymath}

\item \underline{Fermi-Dirac particles:} $f_{h}$ converges to the limit state $f^{\infty}$ defined as
\begin{displaymath}
f^{\infty}(\mathbf{v}) = \left\{
\begin{array}{ll}
\mathcal{M}_{q}(\mathbf{v}) \, , & \textnormal{if } m_{0,h} = \rho_{h}^{0} - \cfrac{K_{d_{\mathbf{v}}}}{\hbar^{d_{\mathbf{v}}}}\, \left(\cfrac{4\pi\,e_{h}^{0}}{d_{\mathbf{v}}}\right)^{\frac{d_{\mathbf{v}}}{2}} < 0, \\
\chi(\mathbf{v}) \, , & \textnormal{if } m_{0,h} = 0, 
\end{array}
\right.
\end{displaymath}
with $K_{2} = 2$, $K_{3} = \cfrac{5}{3}\sqrt{\cfrac{10}{\pi}}$, and where
\begin{displaymath}
\mathcal{M}_{q}(\mathbf{v}) = \cfrac{1}{\hbar^{d_{\mathbf{v}}}}\, \cfrac{1}{\frac{1}{z_{h}^{0}}\,\exp\left(\frac{|\mathbf{v}-\mathbf{u}_{h}^{0}|^{2}}{2T_{h}^{0}}\right) + 1} \, ,
\end{displaymath}
with $(z_{h}^{0},T_{h}^{0})$ solving
\begin{displaymath}
\left\{
\begin{array}{rcl}
\rho_{h}^{0} &=& \cfrac{(2\pi T_{h}^{0})^{\frac{d_{\mathbf{v}}}{2}}}{\hbar^{d_{\mathbf{v}}}}\, F_{\frac{d_{\mathbf{v}}}{2}}(z_{h}^{0}) \, , \\
e_{h}^{0} &=& \cfrac{d_{\mathbf{v}}\,T_{h}^{0}}{2}\, \cfrac{F_{\frac{d_{\mathbf{v}}}{2}+1}(z_{h}^{0})}{F_{\frac{d_{\mathbf{v}}}{2}}(z_{h}^{0})} \, ,
\end{array}
\right.
\end{displaymath}
and
\begin{displaymath}
\chi(\mathbf{v}) = \cfrac{1}{\hbar^{d_{\mathbf{v}}}} \, \mathds{1}_{B(\mathbf{u}_{h}^{0},A_{h}^{0})}(\mathbf{v}) \, ,  \quad A_{h}^{0} = \sqrt{\cfrac{2e_{h}^{0}\,(d_{\mathbf{v}}+2)}{d_{\mathbf{v}}}}.
\end{displaymath}

\end{itemize}

\subsubsection{2D Fermi-Dirac relaxation}

We consider a $128 \times 128$ uniform grid in the velocity domain $[-8,8]^{2}$ and a time step $\Delta t = 0.025$. We do not use  velocity rescaling by taking $(\lambda, \omega, \mu) = (0,1,(\frac{8}{\pi})^{2})$ and use a Runge-Kutta 2 SSP time integrator. We consider 2 different values for $f^{0}$:
\begin{itemize}
\item Fermi-Dirac saturation: we set
\begin{equation} \label{fermions_2d_init_indicator}
f^{0,1}(\mathbf{v}) = \cfrac{\rho^{0}}{\pi (A^{0})^{2}}\, \mathds{1}_{B(\mathbf{u}^{0},A^{0})}(\mathbf{v}) \, ,
\end{equation}
with $A^{0} = 2\sqrt{e^{0}}$. Even if the initial distribution is not continuous, we ensure that the entropy is always defined since the maximal value of $f^{0}$ is $\frac{\rho^{0}}{4\pi e^{0}} \approx (\hbar^{*})^{-2}$. Hence we expect a relaxation towards a quantum maxwellian $\mathcal{M}_{q}$ for any $\hbar < \hbar^{*}$, and no time variation when $\hbar = \hbar^{*}$ (or numerical instabilities leading to a non-physical phenomenon).

\item Classical maxwellian distribution: we set
\begin{equation}  \label{fermions_2d_init_cmax}
f^{0,2}(\mathbf{v}) = \cfrac{\rho^{0}}{2\pi \sigma^{0}} \, \exp\left(-\cfrac{|\mathbf{v}-\mathbf{u}^{0}|^{2}}{2\sigma^{0}}\right) \, ,
\end{equation}
with $\rho^{0} = 1$, $\mathbf{u}^{0} = (0,0)$ and $\sigma^{0} = 1$. In that case, one has $\hbar^{*} \approx 3.5449$. The motivation for choosing such initial datum is that it is a regular distribution whose maximal value is equal to $\frac{\rho^{0}}{2\pi \sigma^{0}} \approx 0.15915$. Let us remind  the reader that an entropic solution must satisfy $\max(f) \leq (\hbar^{*})^{-2}$, and that entropy decay is not ensured in the opposite case. 
Consequently, we must pay attention to the chosen value for $\hbar$ for running the simulation with respect to the value of $(\hbar^{*})^{-2} \approx 0.07958$ prescribed by the moments of $f^{0}$ since taking $\hbar$ too large will break the entropy definition criterion above (even if we do not reach the saturation criterion $\hbar = \hbar^{*}$).
\end{itemize}

Starting with $f^{0,1}$ from \eqref{fermions_2d_init_indicator}, we indeed observe a relaxation of the distribution to a quantum maxwellian for any $\hbar = r\, \hbar^{*}$ with $r \in \{0.1, 0.5, 0.8, 0.9, 0.95, 0.99\}$ (see Figures \ref{fermions_2d_ball_indicator_r01_05}-\ref{fermions_2d_ball_indicator_r099}). We also notice that the shape of the distribution at time $t = 30$ is close to an indicator function of the form $\chi$ as we consider values of $r$ that are close to $1^{-}$. We observe that for the particular case $r = 0.99$, the entropy increases after a finite time. This is not surprising since the limit state as $t \to +\infty$ is almost discontinous, so the Fourier transforms applied to such data induces Gibbs numerical phenomenon. 

\begin{figure}
\begin{tabular}{cc}

\begin{tabular}{c}
\includegraphics[width=0.45\textwidth]{./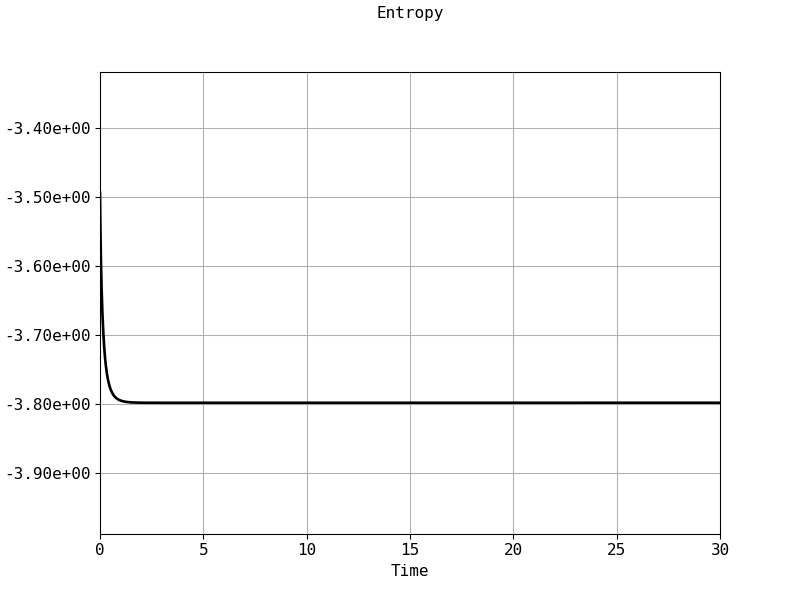}
\end{tabular}
& 
\begin{tabular}{c}\includegraphics[width=0.45\textwidth]{./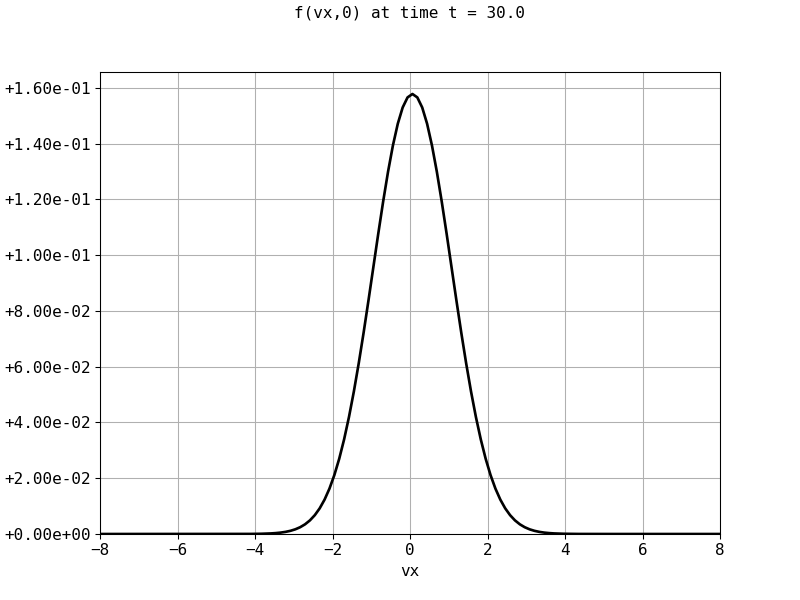}
\end{tabular}
\\

%

\begin{tabular}{c}
\includegraphics[width=0.45\textwidth]{./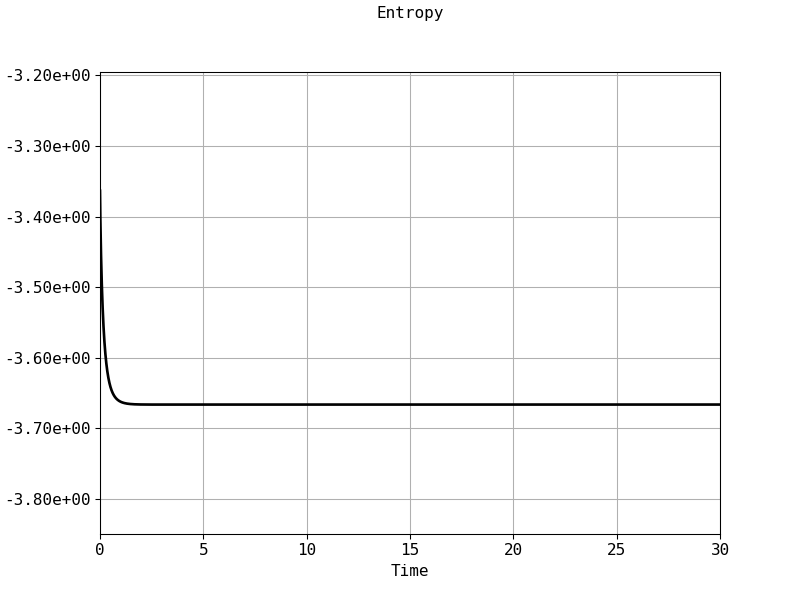}
\end{tabular}
& 
\begin{tabular}{c}\includegraphics[width=0.45\textwidth]{./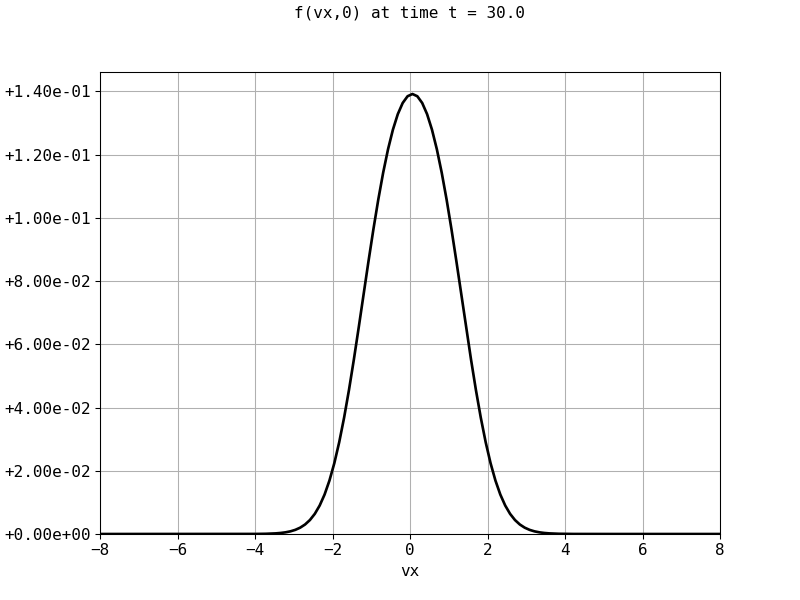}
\end{tabular}
\\

\begin{tabular}{c}
\includegraphics[width=0.45\textwidth]{./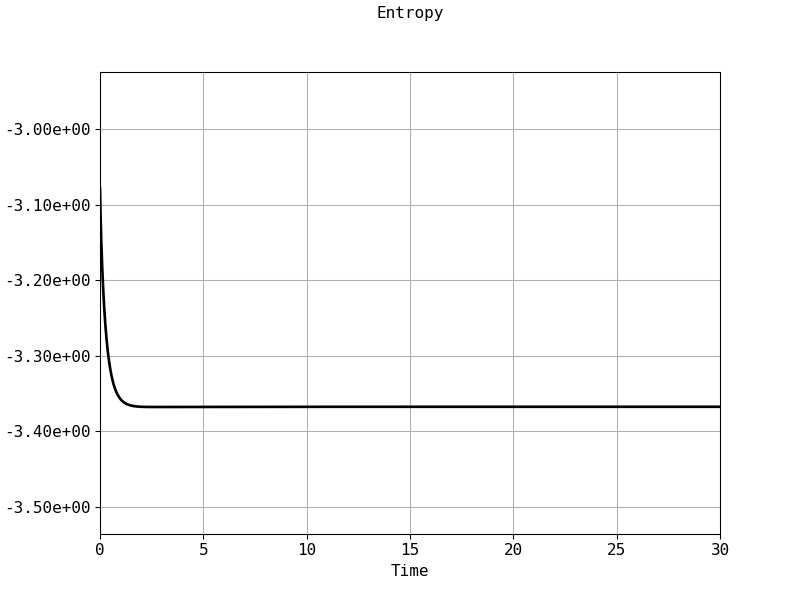}
\end{tabular}
& 
\begin{tabular}{c}\includegraphics[width=0.45\textwidth]{./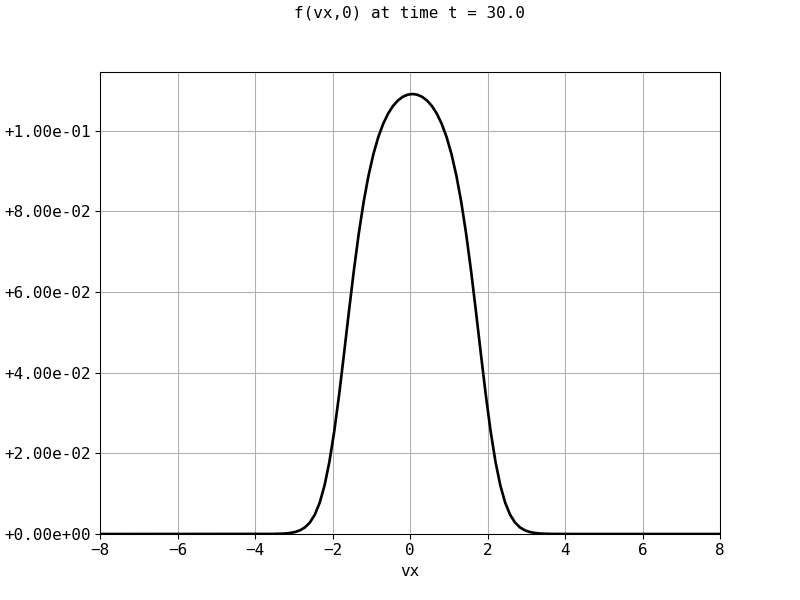}
\end{tabular}

\end{tabular}
\caption{\textbf{Trends to equilibrium.} Time evolution of the entropy (left) and $\{v_y=0\}$--profile of the numerical equilibria (right) for the 2D Fermi-Dirac case with initial datum $f^{0,1}$ and $\hbar = r \hbar^{*}$ with  $r \in \{0.1, 0.5, 0.8\}$ (top to bottom).} \label{fermions_2d_ball_indicator_r01_05}
\end{figure}

\begin{figure}
\begin{tabular}{cc}

\begin{tabular}{c}
\includegraphics[width=0.45\textwidth]{./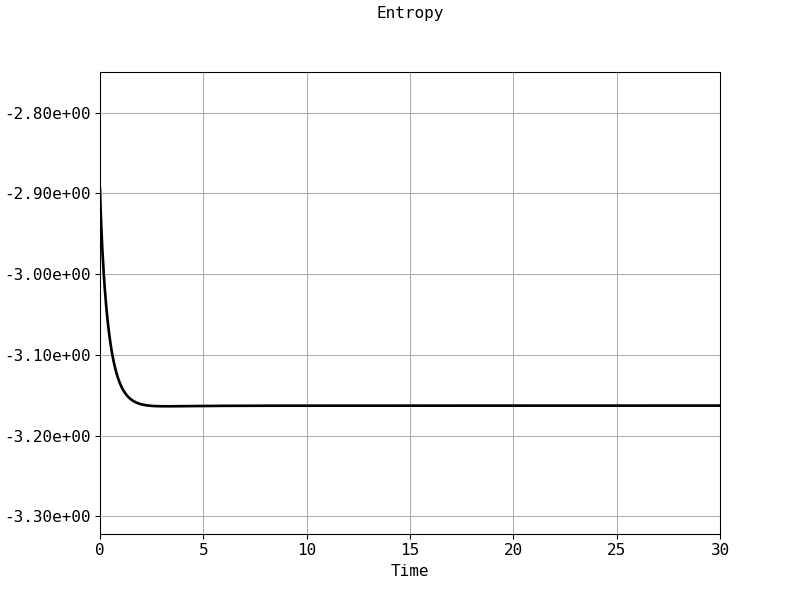}
\end{tabular}
& 
\begin{tabular}{c}\includegraphics[width=0.45\textwidth]{./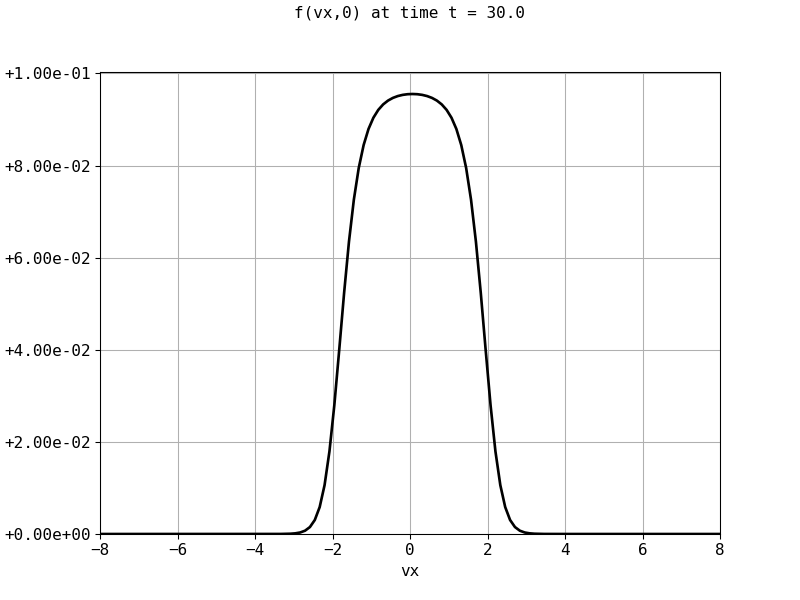}
\end{tabular}
\\

\begin{tabular}{c}
\includegraphics[width=0.45\textwidth]{./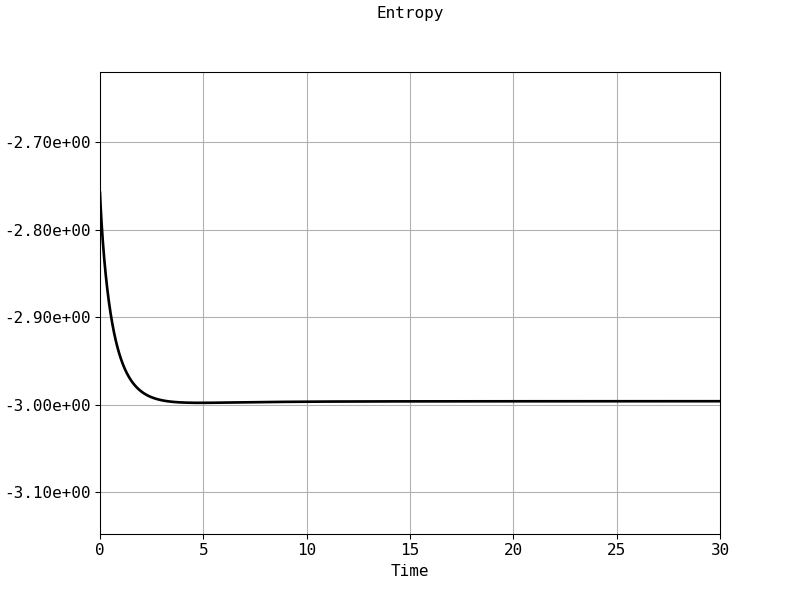}
\end{tabular}
& 
\begin{tabular}{c}\includegraphics[width=0.45\textwidth]{./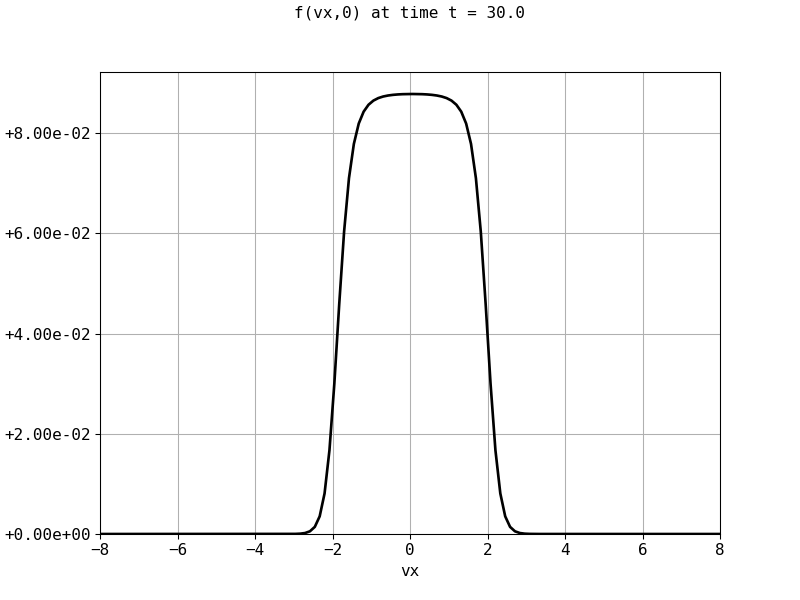}
\end{tabular}

\\

\begin{tabular}{c}
\includegraphics[width=0.45\textwidth]{./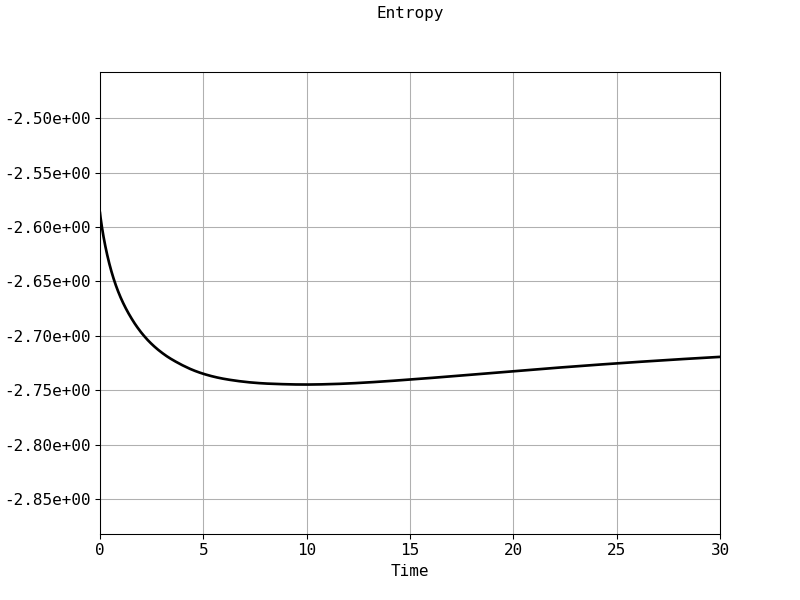}
\end{tabular}
& 
\begin{tabular}{c}\includegraphics[width=0.45\textwidth]{./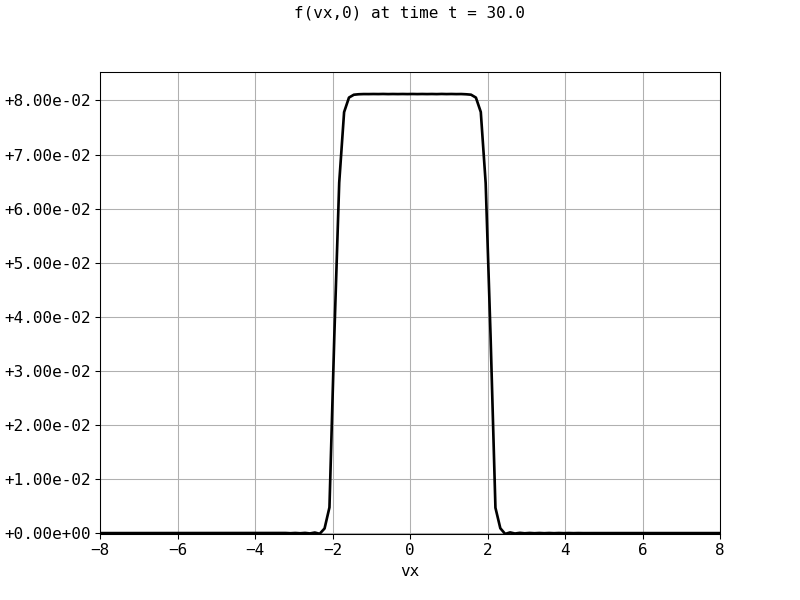}
\end{tabular}
\end{tabular}

\caption{\textbf{Trends to equilibrium.} Time evolution of the entropy (left) and $\{v_y=0\}$--profile of the numerical equilibria (right) for the 2D Fermi-Dirac case with initial datum $f^{0,1}$ and $\hbar = r \hbar^{*}$ with $r = \in \{0.9, 0.95, 0.99\}$ (top to bottom).} \label{fermions_2d_ball_indicator_r099}
\end{figure}

 We then run simulations with $r \geq 1$ for investigating the behavior of the distribution when Fermi-Dirac saturation occurs. For the specific case $r = 1$ where we expect a time independent solution, the errors due to the treatment of a non-continous initial datum by Fourier transform make the approximate solution blow up numerically within a finite time (see Figure \ref{fermions_2d_ball_indicator_r11}). For higher values of $r$, we also observe a numerical blow up of the approximate solution within a finite time which seems to depend on $\hbar$. This last observation is not surprising since we considered the case where $\rho_{h}^{0} > \frac{4\pi e_{h}^{0}}{\hbar^{2}}$ where we cannot identify a limit state as $t \to +\infty$.

\begin{figure}
\begin{tabular}{cc}

\begin{tabular}{c}\includegraphics[width=0.45\textwidth]{./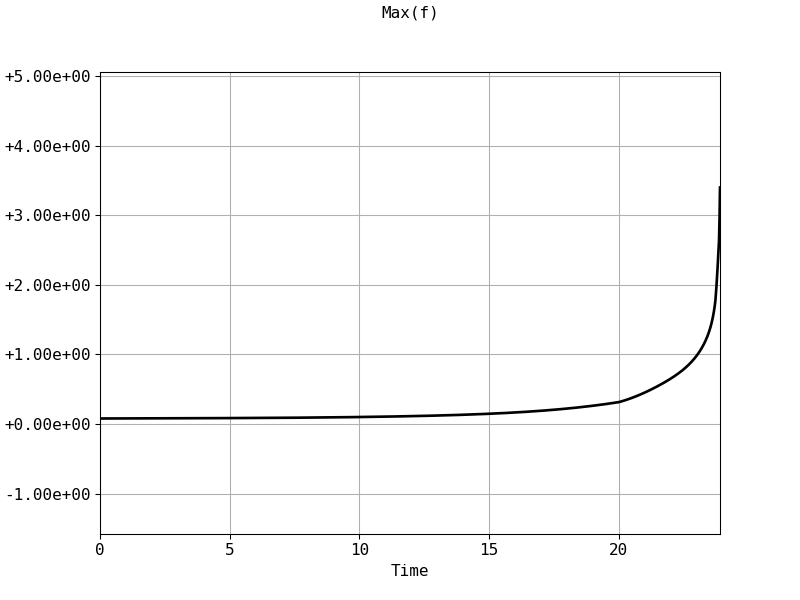}
\end{tabular}
&
\begin{tabular}{c}\includegraphics[width=0.45\textwidth]{./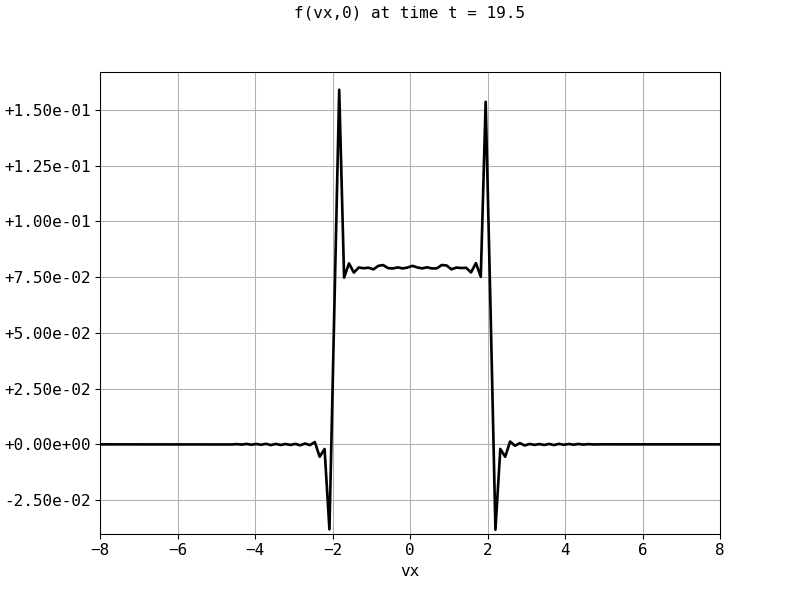}
\end{tabular}

\\

 \begin{tabular}{c}\includegraphics[width=0.45\textwidth]{./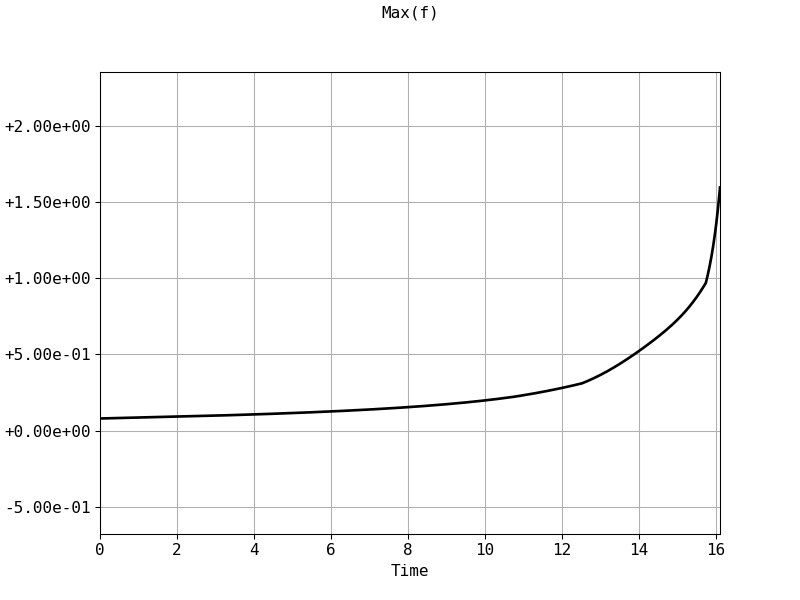}
\end{tabular}
&
\begin{tabular}{c}\includegraphics[width=0.45\textwidth]{./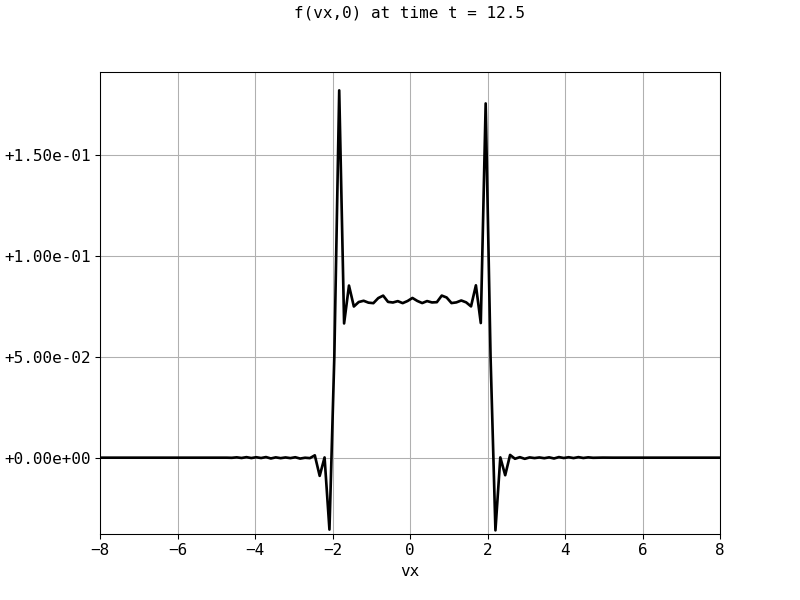}
\end{tabular}

\\

\begin{tabular}{c}\includegraphics[width=0.45\textwidth]{./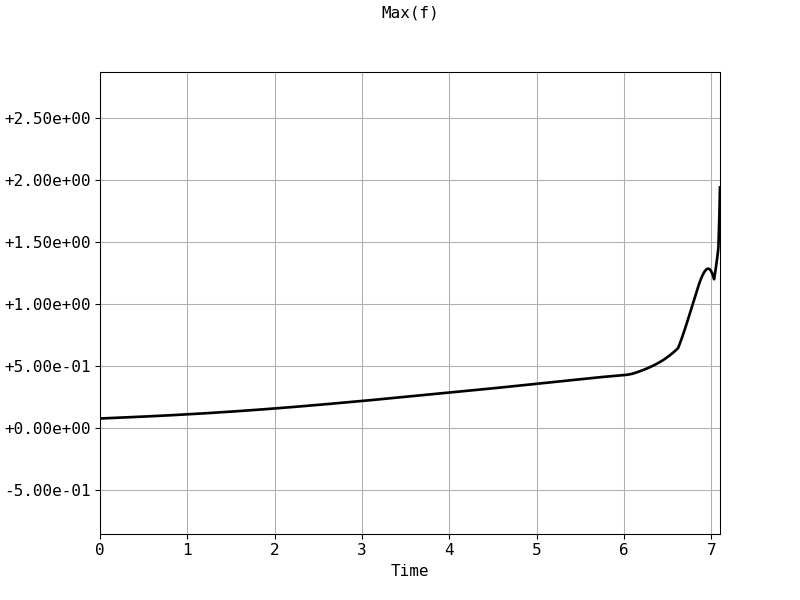}
\end{tabular}
&
\begin{tabular}{c}\includegraphics[width=0.45\textwidth]{./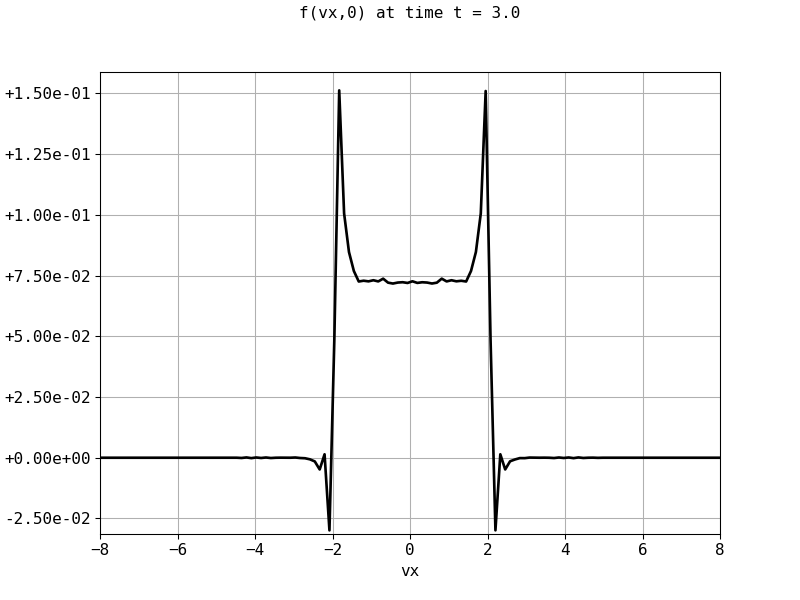}
\end{tabular}

%
\end{tabular}

\caption{\textbf{Trends to equilibrium.} Time evolution of $\max f_h(t,\cdot)$ (left) and $\{v_y=0\}$--profile of the numerical blow up profile (right) for the 2D Fermi-Dirac case with initial datum $f^{0,1}$ and $\hbar = r \hbar^{*}$ with  $r \in \{1, 1.01, 1.05\}$ (top to bottom).} \label{fermions_2d_ball_indicator_r11}
\end{figure}

\medskip
 We then start with the classical maxwellian $f^{0,2}$ from \eqref{fermions_2d_init_cmax}. In that case,  since $\hbar^{*} \approx 3.54491$ and $\max(f^{0,2}) \approx 0.15915$, the entropy definition criteria may be broken at initial time. Taking $\hbar = r\hbar^{*}$ with $r\geq 0$ indicates that it is impossible to define the entropy for $r \geq 0.8$ (see Table \ref{fermions_2d_table_hbar}). 

\begin{table}
\begin{center}
\begin{tabular}{|c|c|}
\hline
$r$ & $\hbar^{-2} = (r\hbar^{*})^{-2}$ \\
\hline
0.1  & 7.95775 \\
0.2  & 1.98944 \\
0.5  & 3.1832e-01 \\
0.8  & 1.2434e-01 \\
0.9  & 9.82438e-02 \\
0.95 & 8.81745e-02 \\
0.99 & 8.11932e-02 \\
1    & 7.95775e-02 \\
\hline
\end{tabular}
\caption{Values of $\hbar^{-2}$ when the initial distribution is a classical maxwellian.} \label{fermions_2d_table_hbar}
\end{center}
\end{table}

By taking $r \in \{0.1, 0.5, 0.8\}$, the entropy is correctly defined at any time since $t \mapsto \max(f(t,\cdot))$ is maximal at time $t = 0$. In such case, we observe the relaxation of the distribution to the associated limit state $f^{\infty} = \mathcal{M}_{q}$ as $t \to +\infty$ (see Figure \ref{fermions_2d_classical_maxwellian_r01_05}). 

\begin{figure}
\begin{tabular}{cc}

\begin{tabular}{c}
\includegraphics[width=0.45\textwidth]{./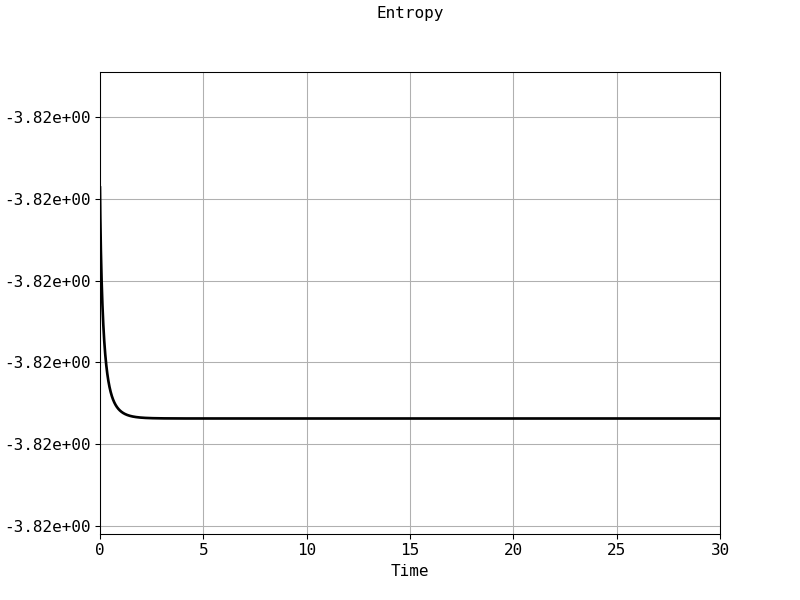}
\end{tabular}
& 
\begin{tabular}{c}\includegraphics[width=0.45\textwidth]{./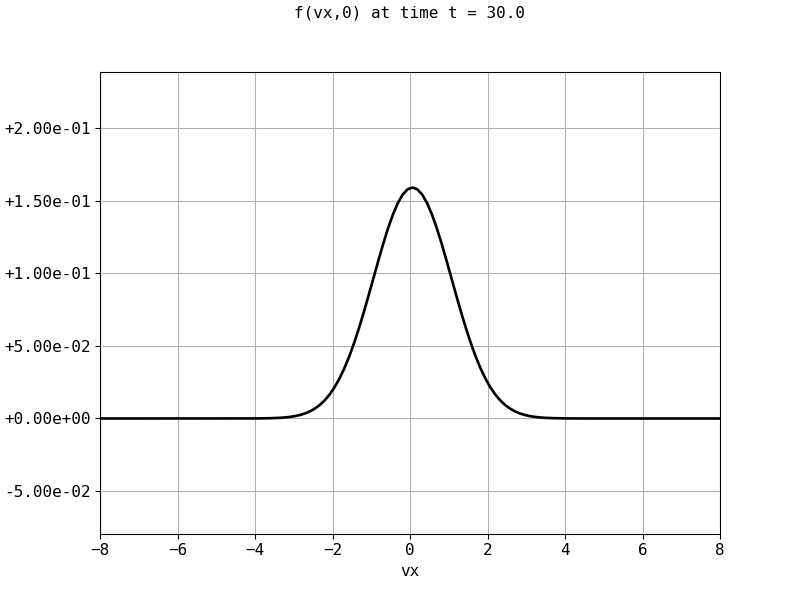}
\end{tabular}
\\


\begin{tabular}{c}
\includegraphics[width=0.45\textwidth]{./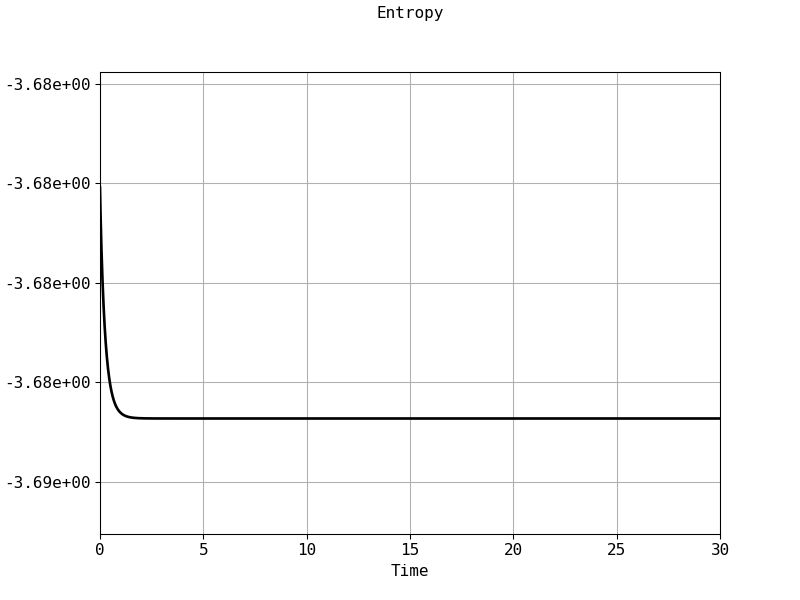}
\end{tabular}
& 
\begin{tabular}{c}\includegraphics[width=0.45\textwidth]{./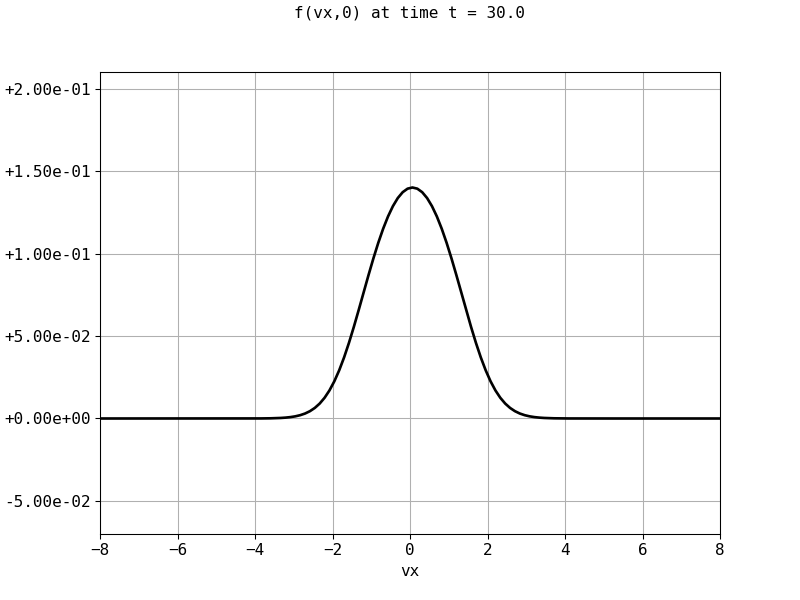}
\end{tabular}

\\
\begin{tabular}{c}
\includegraphics[width=0.45\textwidth]{./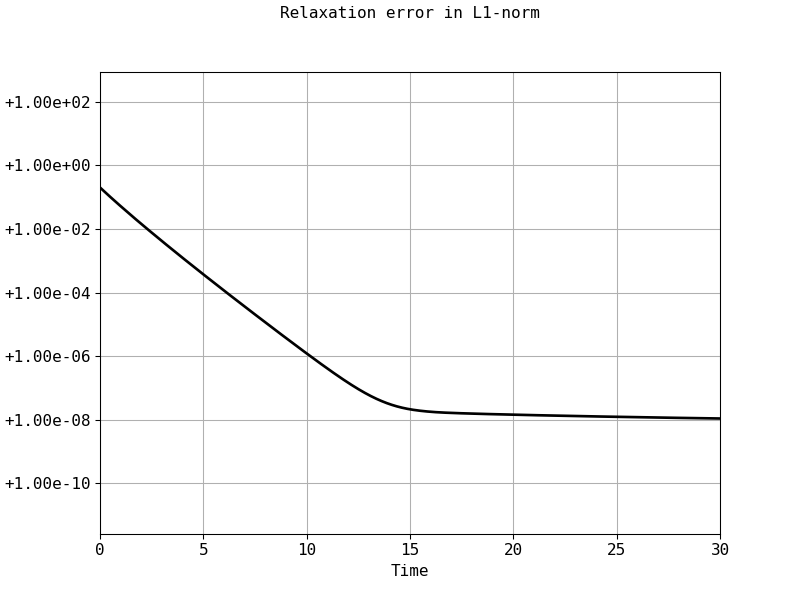}
\end{tabular}
& 
\begin{tabular}{c}\includegraphics[width=0.45\textwidth]{./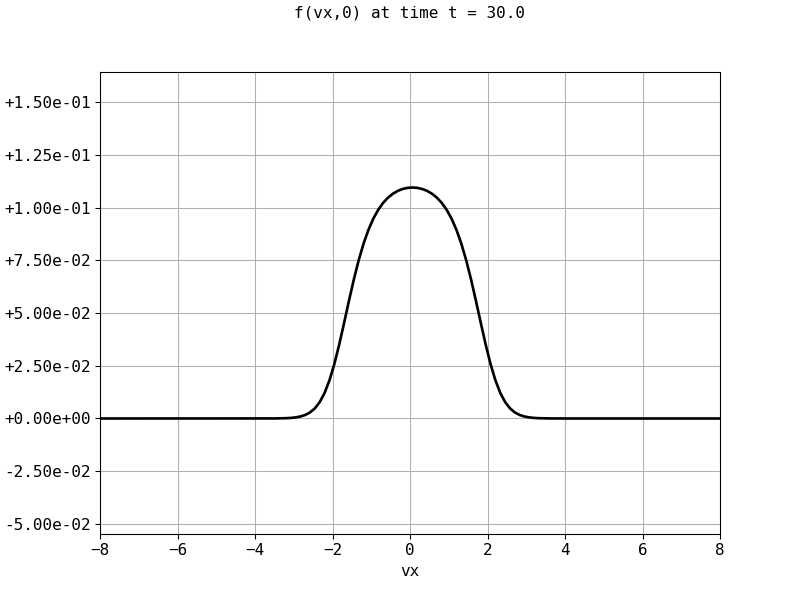}
\end{tabular}

\end{tabular}

\caption{\textbf{Trends to equilibrium.} Time evolution of the entropy (left) and $\{v_y=0\}$--profile of the numerical equilibria (right) for the 2D Fermi-Dirac case with initial datum $f^{0,2}$ and $\hbar = r \hbar^{*}$ with  $r \in \{0.1, 0.5, 0.8\}$ (top to bottom).} \label{fermions_2d_classical_maxwellian_r01_05}
\end{figure}

For the cases $r \in \{0.9, 0.95, 0.99\}$, even if the entropy cannot be correctly defined, we observe that the numerical distribution  $f_{h}$ still converges to the steady state $f_{h}^{\infty} = \mathcal{M}_{q}$ defined with the moments $(\rho_{h}^{0},\mathbf{u}_{h}^{0},e_{h}^{0})$ for any $r < 1$ since the relaxation error $\|f_h(t,\cdot)-f_{h}^{\infty}\|_{\ell^{1}}$ decreases as $t \to +\infty$ (see Figure \ref{fermions_2d_classical_maxwellian_r099}). 

\begin{figure}
\begin{tabular}{cc}

\\

\begin{tabular}{c}
\includegraphics[width=0.45\textwidth]{./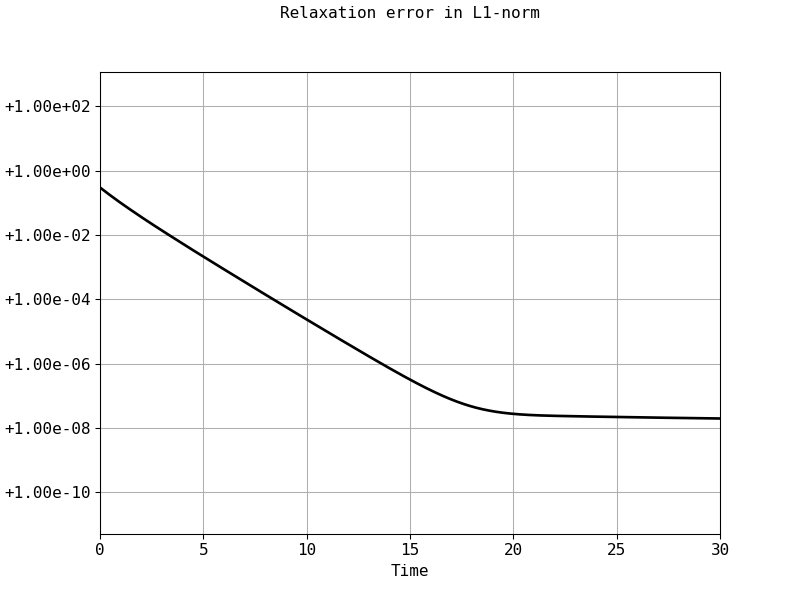}
\end{tabular}
& 
\begin{tabular}{c}\includegraphics[width=0.45\textwidth]{./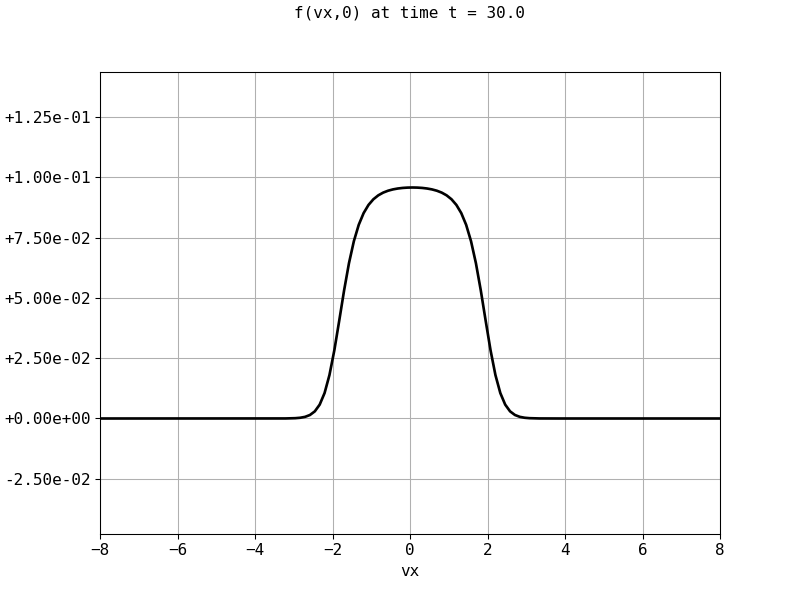}
\end{tabular}
\\

\begin{tabular}{c}
\includegraphics[width=0.45\textwidth]{./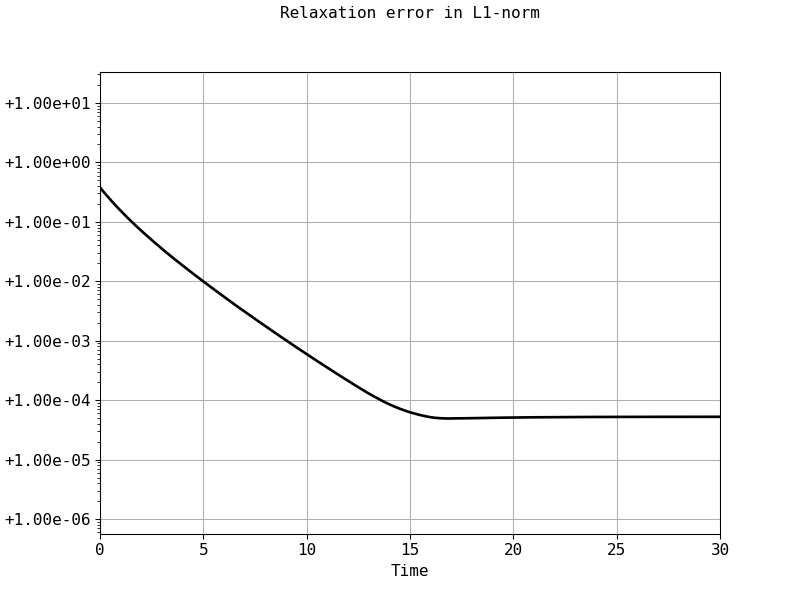}
\end{tabular}
& 
\begin{tabular}{c}\includegraphics[width=0.45\textwidth]{./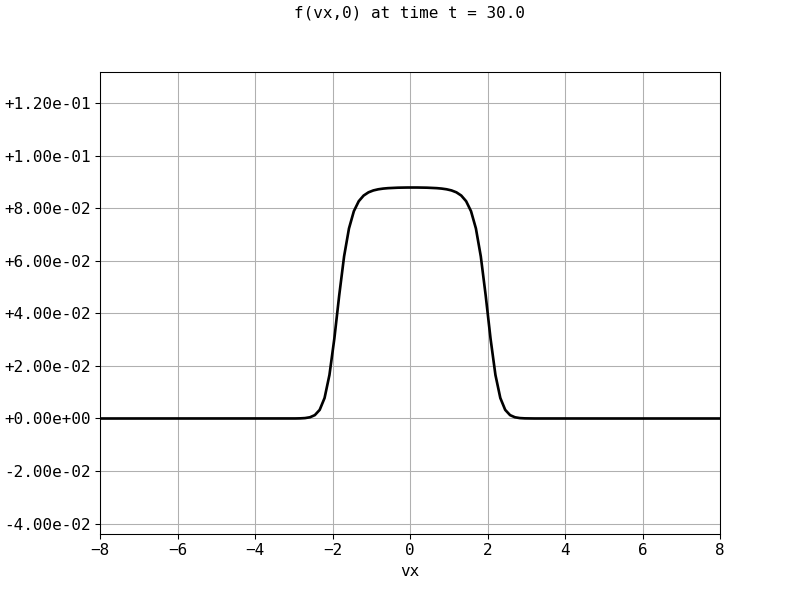}
\end{tabular}

\\

\begin{tabular}{c}
\includegraphics[width=0.45\textwidth]{./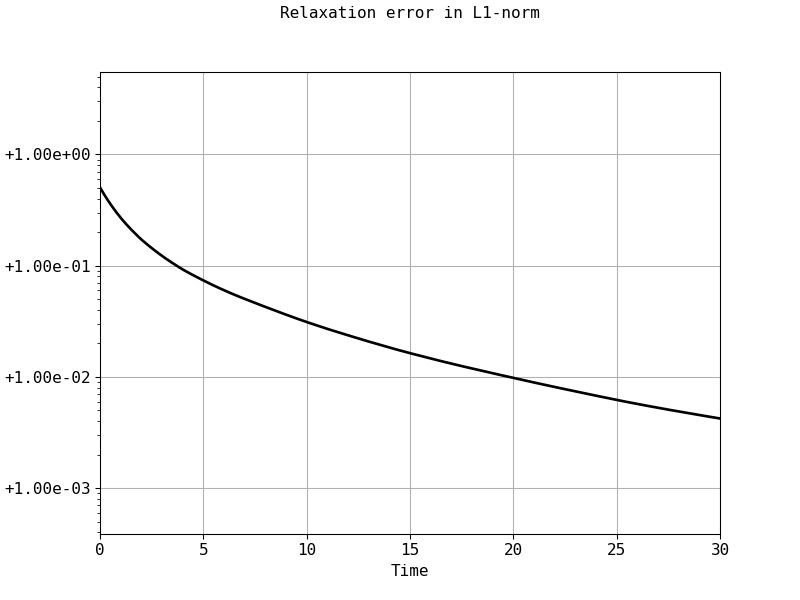}
\end{tabular}
& 
\begin{tabular}{c}\includegraphics[width=0.45\textwidth]{./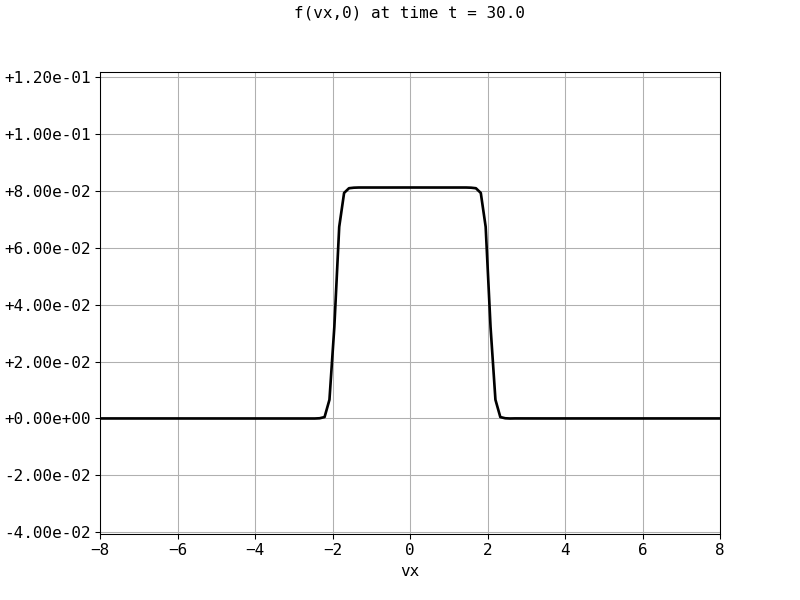}
\end{tabular}
\end{tabular}

\caption{\textbf{Trends to equilibrium.} Time evolution of $\|f_h(t,\cdot)- f_h^\infty\|_{\ell^1}$ (left) and $\{v_y=0\}$--profile of the numerical equilibria (right) for the 2D Fermi-Dirac case with initial datum $f^{0,2}$ and $\hbar = r \hbar^{*}$ with  $r \in \{0.9, 0.95, 0.99\}$ (top to bottom).} \label{fermions_2d_classical_maxwellian_r099}
\end{figure}

Finally, the cases $r \geq 1$ are similar to the indicator case presented in Figure \ref{fermions_2d_ball_indicator_r01_05}: the entropy decay is not observed numerically, because of the oscillations induced by the singular equilibria. The limiting case $r=1$ is presented in Figure \ref{fermions_2d_classical_maxwellian_r1}, as well as the time evolution of $t \mapsto \max(f_h(t,\cdot))$.

\begin{figure}
\begin{tabular}{cc}
\begin{tabular}{c}
\includegraphics[width=0.45\textwidth]{./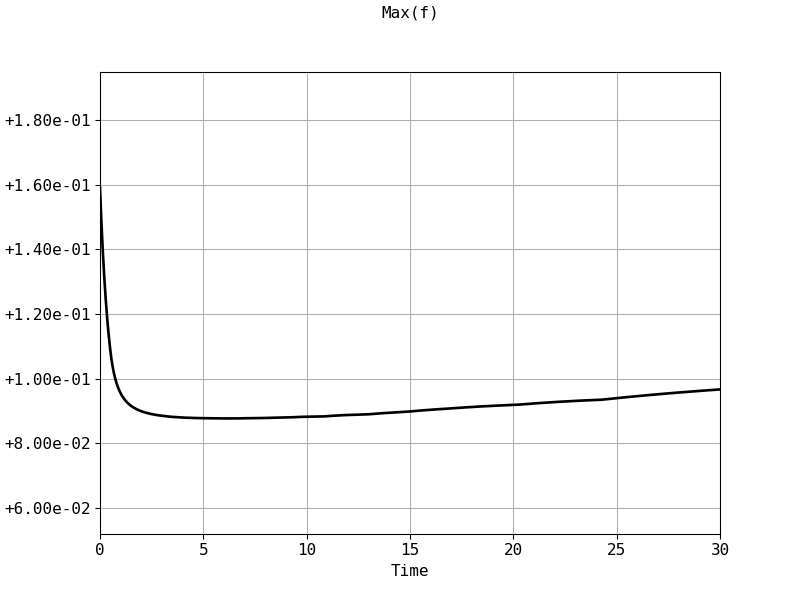}
\end{tabular}
& 
\begin{tabular}{c}\includegraphics[width=0.45\textwidth]{./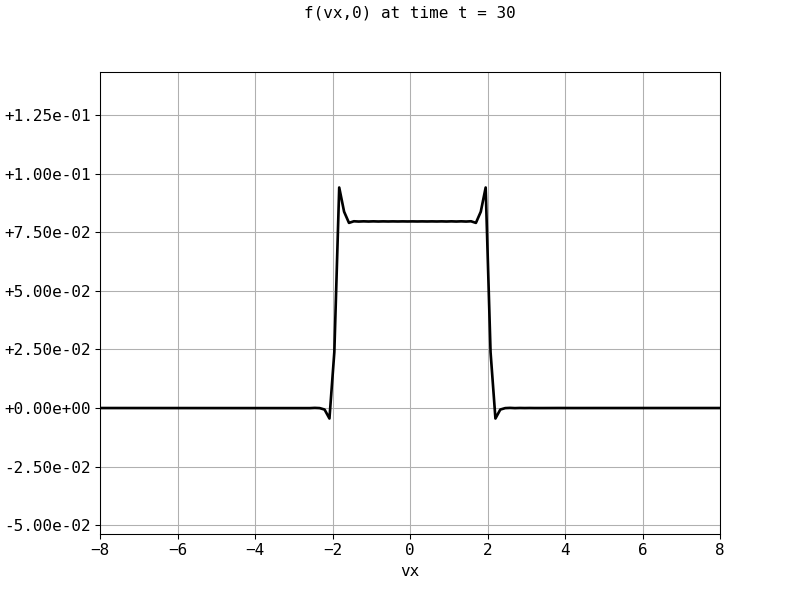}
\end{tabular}
\end{tabular}

\caption{\textbf{Trends to equilibrium.} Time evolution of $\max f_h(t,\cdot)$ (left) and $\{v_y=0\}$--profile of the numerical equilibrium (right) for the 2D Fermi-Dirac case with initial datum $f^{0,2}$ and $\hbar = \hbar^{*}$.} \label{fermions_2d_classical_maxwellian_r1}
\end{figure}

\subsubsection{3D Fermi-Dirac relaxation}

Let us now extend the analysis of the previous paragraph to the case of 3D Fermi-Dirac particles. Due to the large time cost required for computing the term $\mathcal{Q}_{1,q}^{R}(G_{h})$, we restrict our simulations to a $32^{3}$ grid in velocity.  Again, we consider two definitions of the initial distribution $f^{0}$:
\begin{itemize}
\item Fermi-Dirac saturation: we set
\begin{equation} \label{fermions_3d_init_cmax}
f^{0,3}(\mathbf{v}) = \cfrac{3\rho^{0}}{4\pi\,(A^{0})^{3}} \, \mathds{1}_{B(\mathbf{u}^{0},A^{0})}(\mathbf{v}) \, ,
\end{equation}
with $A^{0} = \sqrt{\frac{10 e^{0}}{3}}$. As for the 2D case, we expect a relaxation towards a quantum maxwellian $\mathcal{M}_{q}$ for any $\hbar < \hbar^{*}$ with $\left (\hbar^{*}\right )^{-3} \approx \frac{3\rho^{0}}{4\pi\,(\frac{10 e^{0}}{3})^{\frac{3}{2}}}$ and no time variation when $\hbar = \hbar^{*}$.

\item Classical maxwellian distribution: we set
\begin{equation} \label{fermions_3d_init_ball}
f^{0,4}(\mathbf{v}) = \cfrac{\rho^{0}}{(2\pi\sigma^{0})^{\frac{3}{2}}}\, \exp\left(-\cfrac{|\mathbf{v}-\mathbf{u}^{0}|^{2}}{2\sigma^{0}}\right) \, ,
\end{equation}
with $\rho^{0} = 1$, $\mathbf{u}^{0} = (0,0,0)$ and $\sigma^{0} = 1$, leading to the threshold value $\hbar^{*} \approx 3.60452$ and $\max(f^{0}) = \frac{\rho^{0}}{(2\pi\,\sigma^{0})^{\frac{3}{2}}} \approx 6.349364\times 10^{-2}$. Since $\left (\hbar^{*}\right )^{-3} \approx 2.13529\times 10^{-2}$, we may break the definition of the entropy if we consider too large values of $\hbar$.
\end{itemize}

\smallskip

Considering the discontinuous initial distribution \eqref{fermions_3d_init_cmax}, we observe the relaxation of the distribution to a quantum maxwellian for any $\hbar = r\,\hbar^{*}$ with $r \in \{0.1, 0.5, 0.8, 0.9, 0.95\}$. Indeed, we observe a decay in time for the entropy and the relaxation error $\|f_{h}(t,\cdot)-f_{h}^{\infty}\|_{\ell^{1}}$ for these values of $\hbar$ (see Figures \ref{fermions_3d_ball_indicator_r01-05}-\ref{fermions_3d_ball_indicator_r08-095}).

\begin{figure}
\begin{tabular}{cc}

\begin{tabular}{c}
\includegraphics[width=0.45\textwidth]{./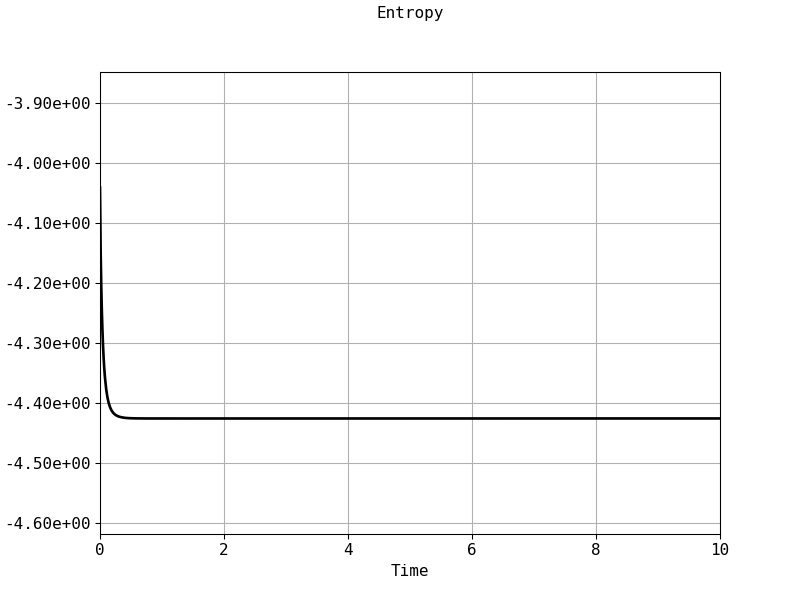}
\end{tabular}
& 
\begin{tabular}{c}\includegraphics[width=0.45\textwidth]{./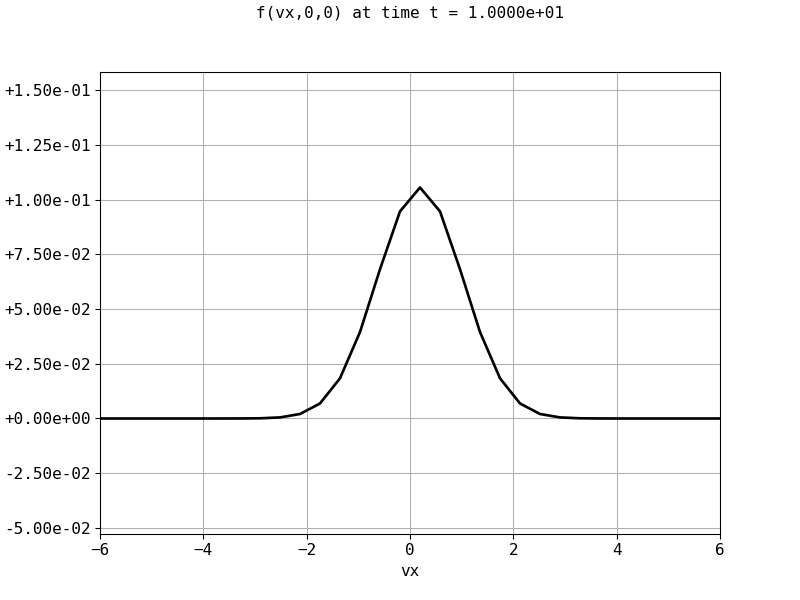}
\end{tabular}
%
%

\\

\begin{tabular}{c}
\includegraphics[width=0.45\textwidth]{./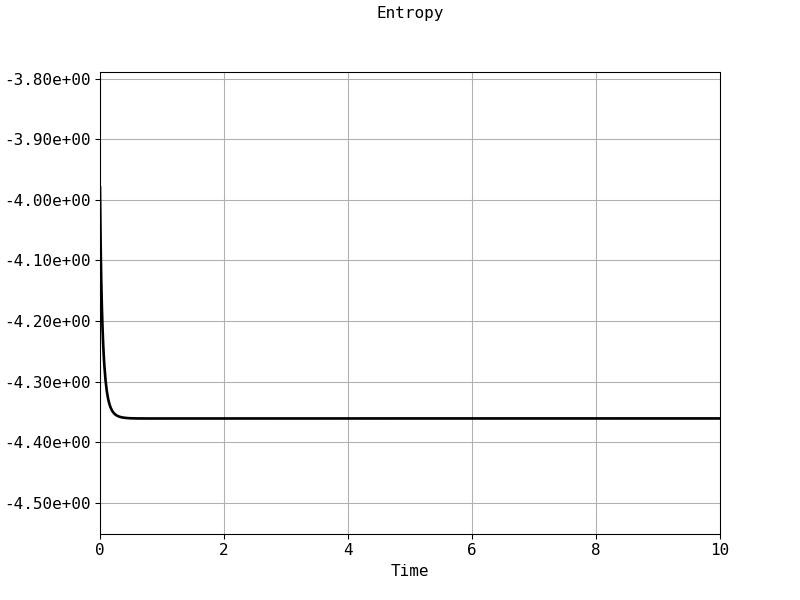}
\end{tabular}
& 
\begin{tabular}{c}\includegraphics[width=0.45\textwidth]{./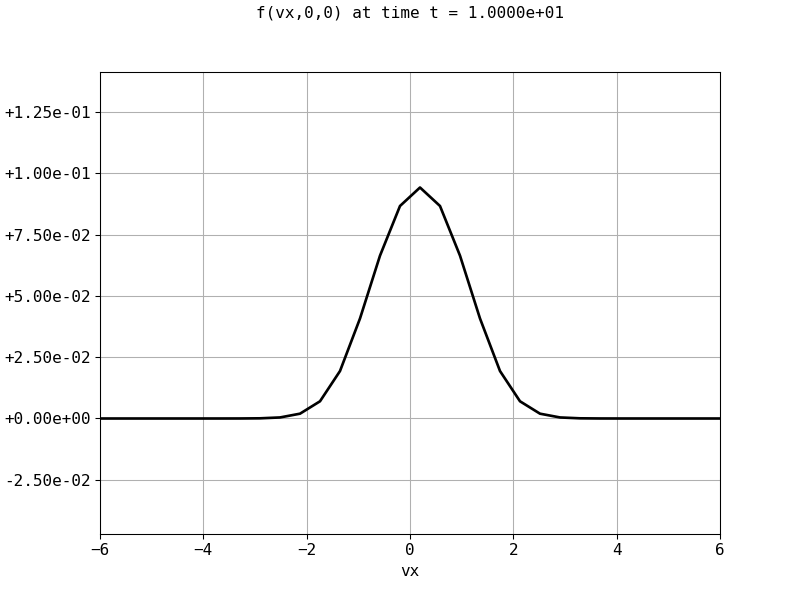}
\end{tabular}

\\

\begin{tabular}{c}
\includegraphics[width=0.45\textwidth]{./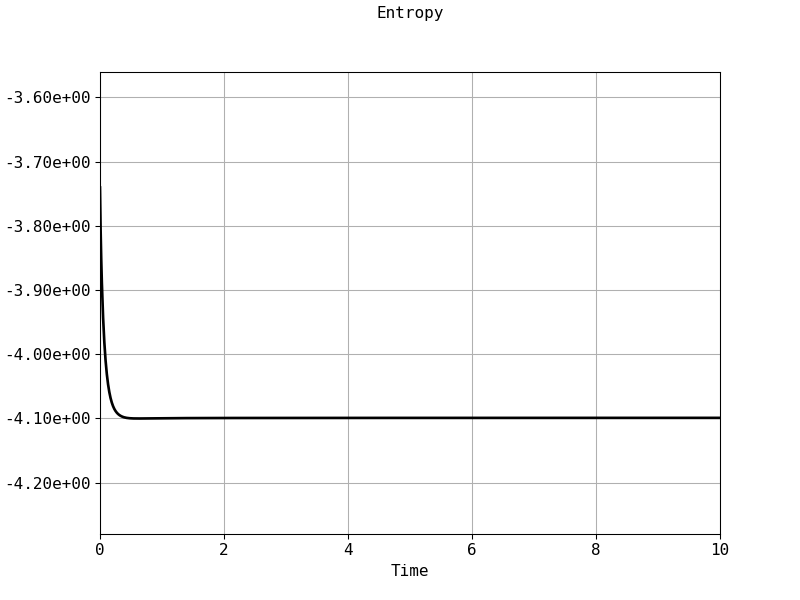}
\end{tabular}
& 
\begin{tabular}{c}\includegraphics[width=0.45\textwidth]{./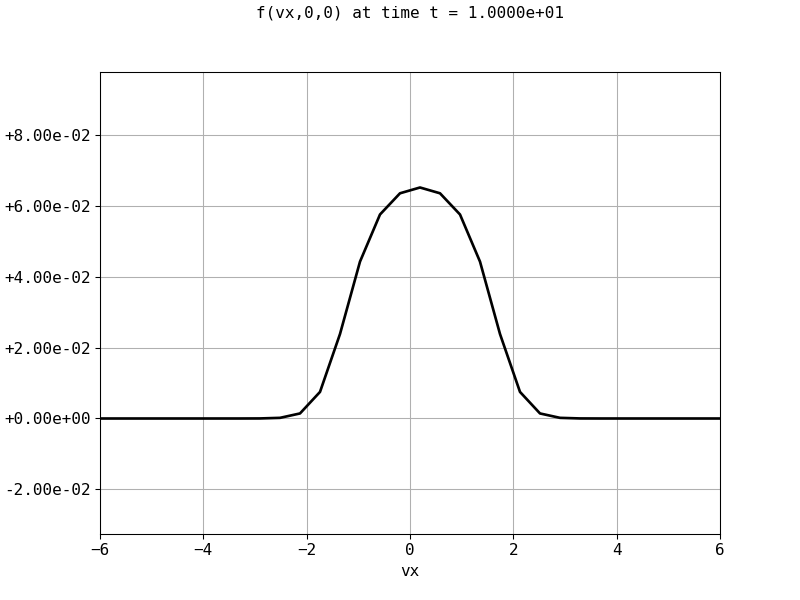}
\end{tabular}

\end{tabular}
\caption{\textbf{Trends to equilibrium.}  Time evolution of the entropy (left) and  profile of $f_{h}(10, (\cdot,0,0))$ (right) for the 3D Fermi-Dirac case with initial datum $f^{0,3}$ and $\hbar = r \hbar^{*}$ with  $r \in \{0.1,  0.5, 0.8\}$ (top to bottom). } \label{fermions_3d_ball_indicator_r01-05}

\end{figure}

\begin{figure}

\begin{tabular}{cc}

\begin{tabular}{c}
\includegraphics[width=0.45\textwidth]{./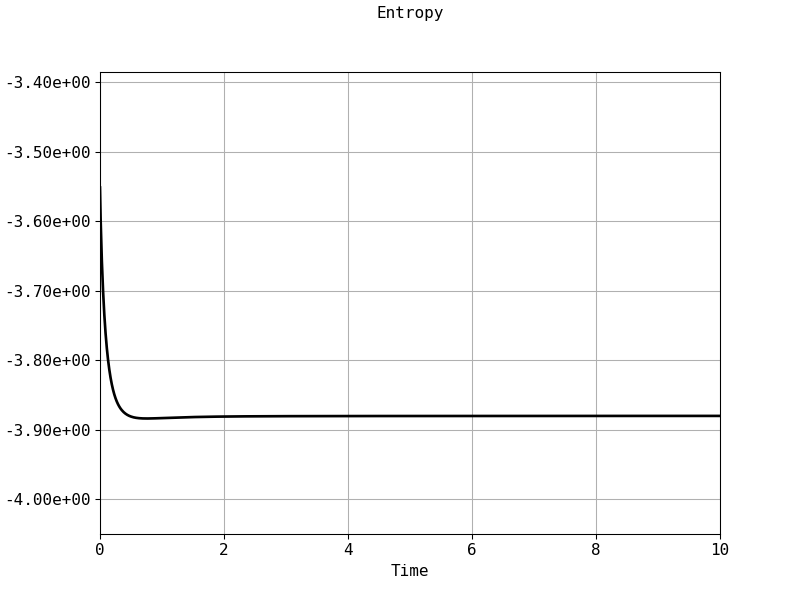}
\end{tabular}
& 
\begin{tabular}{c}\includegraphics[width=0.45\textwidth]{./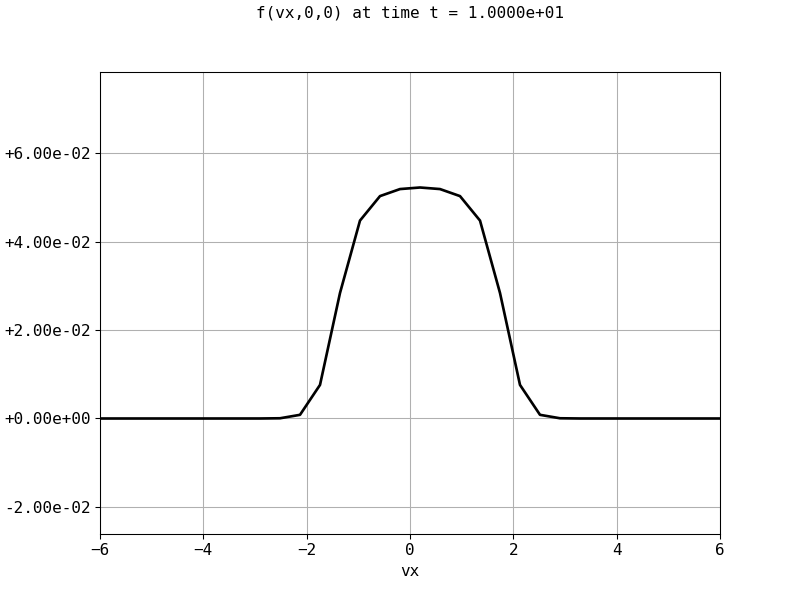}
\end{tabular}

\\

\begin{tabular}{c}
\includegraphics[width=0.45\textwidth]{./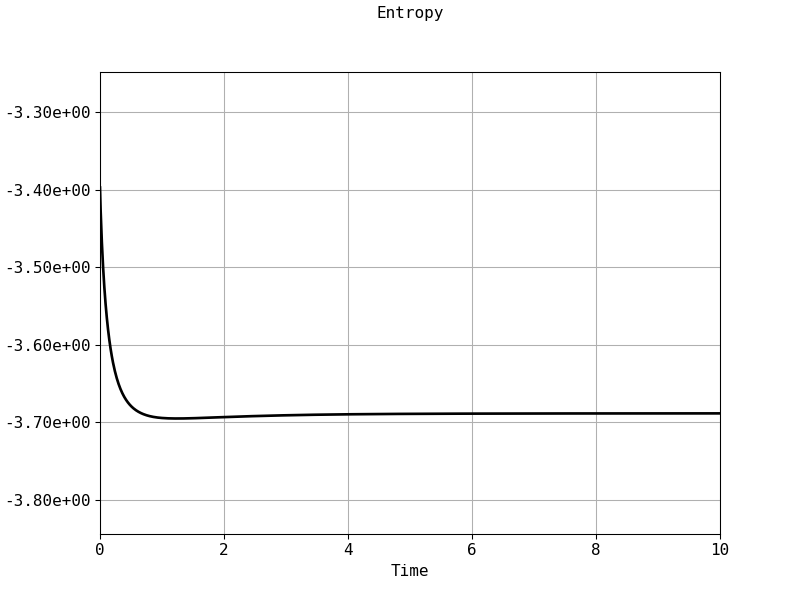}
\end{tabular}
& 
\begin{tabular}{c}\includegraphics[width=0.45\textwidth]{./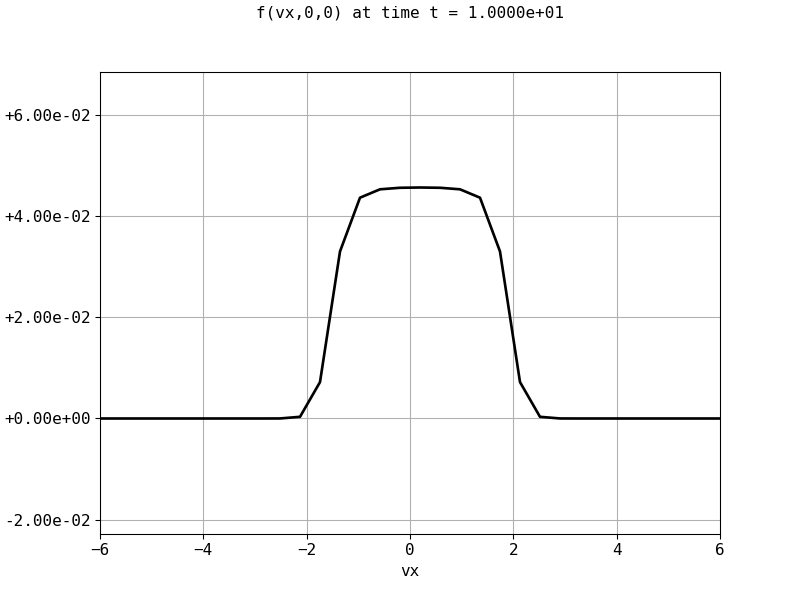}
\end{tabular}

\end{tabular}
\caption{\textbf{Trends to equilibrium.}  Time evolution of the entropy (left) and  profile of $f_{h}(10, (\cdot,0,0))$ (right) for the 3D Fermi-Dirac case with initial datum $f^{0,3}$ and $\hbar = r \hbar^{*}$ with  $r \in \{0.9, 0.95\}$ (top to bottom). } \label{fermions_3d_ball_indicator_r08-095}

\end{figure}

Following the methodology proposed for the 2D Fermi-Dirac case, we also tested the case $\hbar = 0.99\,\hbar^{*}$ where we observe again a numerical blow-up of the distribution instead of the relaxation towards a quantum maxwellian. Again, this blow-up is due to Gibbs phenomenon induced by the treatment of an almost discontinous signal with Fourier transforms. In the present case, this numerical blow-up is highlighted by the loss of the decay of the entropy (see Figure \ref{fermions_3d_ball_indicator_r099}).

\begin{figure}
\begin{tabular}{cc}

\begin{tabular}{c}
\includegraphics[width=0.45\textwidth]{./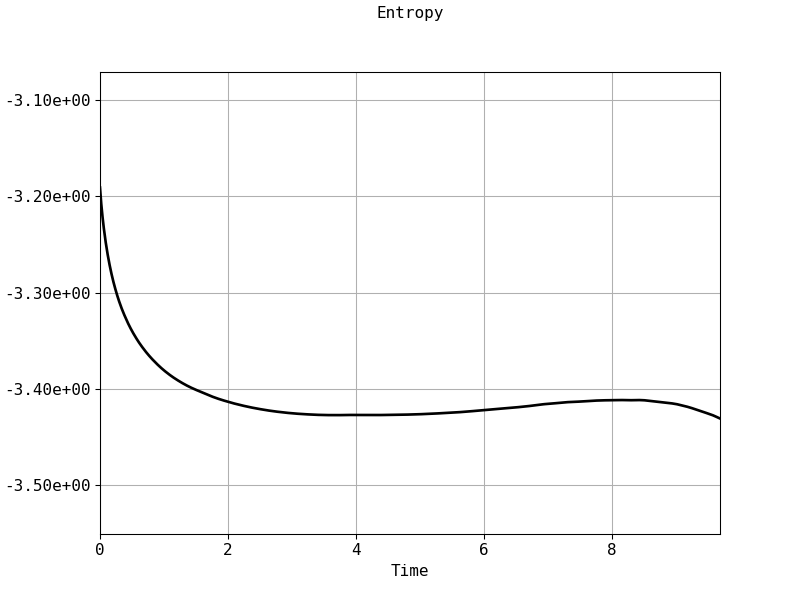}
\end{tabular}
& 
\begin{tabular}{c}\includegraphics[width=0.45\textwidth]{./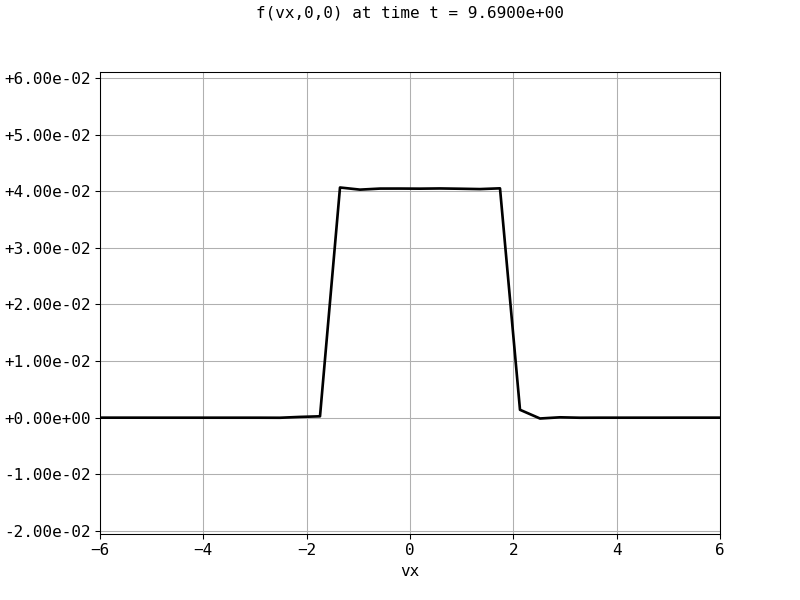}
\end{tabular}

\end{tabular}
\caption{\textbf{Trends to equilibrium.}  Time evolution of the entropy (left) and  profile of $f_{h}(9.69, (\cdot,0,0))$ (right) for the 3D Fermi-Dirac case with initial datum $f^{0,3}$ and $\hbar = 0.99 \hbar^{*}$. } \label{fermions_3d_ball_indicator_r099}

\end{figure}

 To complete this study, we also investigated the values $\hbar = r\,\hbar^{*}$ with $r \geq 1$ and the conclusions we made for the 2D Fermi-Dirac can be extended to the 3D case. For the specific case $r = 1$ where Fermi-Dirac saturation occurs, the accumulation of local errors due to Gibbs phenomenon quickly deteriorates the discrete distribution $f_{h}$ so the numerical blow-up appears after a small number of time iterations (see Figure \ref{fermions_3d_ball_indicator_r1}). For higher values of $r$, we observe a numerical blow-up of $f_{h}$ that is probably due to the fact that we have $\rho_{h}^{0} > \cfrac{5}{3\hbar^{3}}\sqrt{\cfrac{10}{\pi}}\, \left(\cfrac{4\pi e_{h}^{0}}{3}\right)^{\frac{3}{2}}$ and that, in such case, we cannot identify the time behaviour of the distribution (see Figure \ref{fermions_3d_ball_indicator_r101} for $r=1.01$).

\begin{figure}
\begin{tabular}{cc}

\begin{tabular}{c}
\includegraphics[width=0.45\textwidth]{./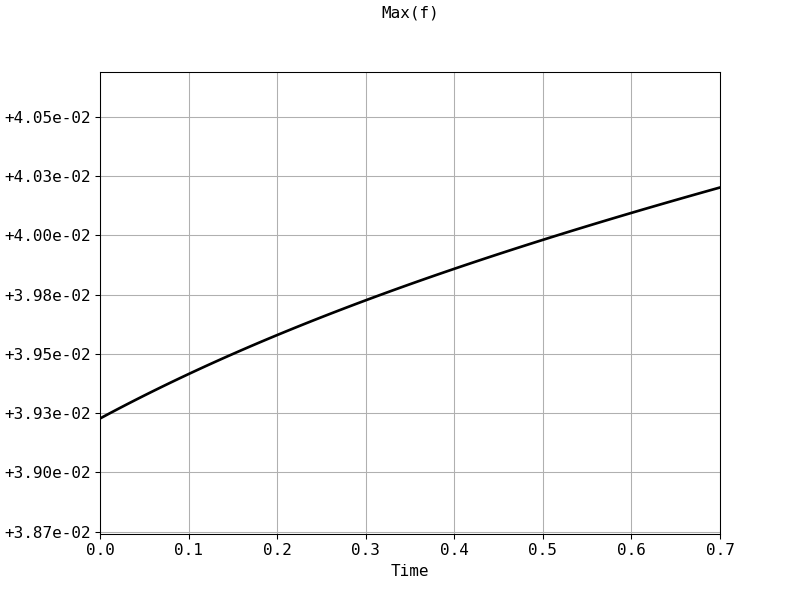}
\end{tabular}
& 
\begin{tabular}{c}\includegraphics[width=0.45\textwidth]{./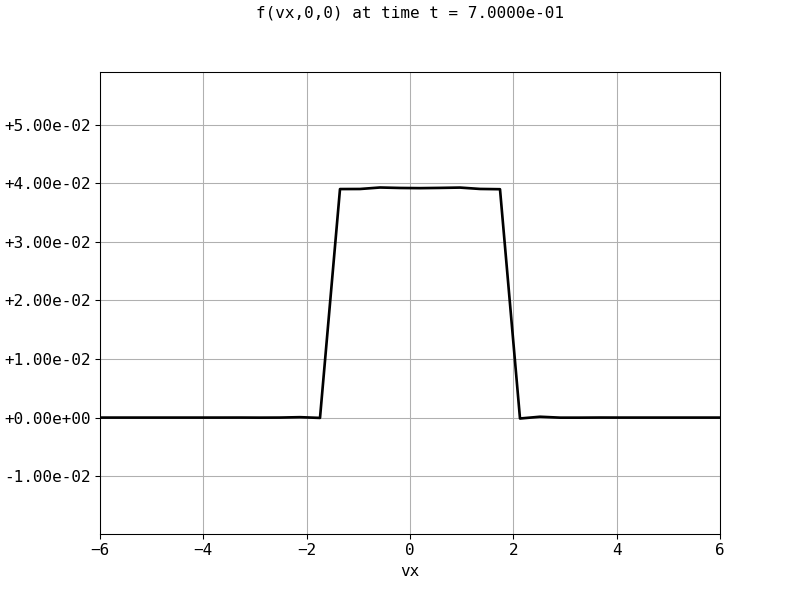}
\end{tabular}

\end{tabular}
\caption{\textbf{Trends to equilibrium.}  Time evolution of $\max f_h(t,\cdot)$ (left) and  profile of $f_{h}(7, (\cdot,0,0))$ (right) for the 3D Fermi-Dirac case with initial datum $f^{0,3}$ and $\hbar = \hbar^{*}$. } \label{fermions_3d_ball_indicator_r1}

\end{figure}

\begin{figure}
\begin{tabular}{cc}

\begin{tabular}{c}
\includegraphics[width=0.45\textwidth]{./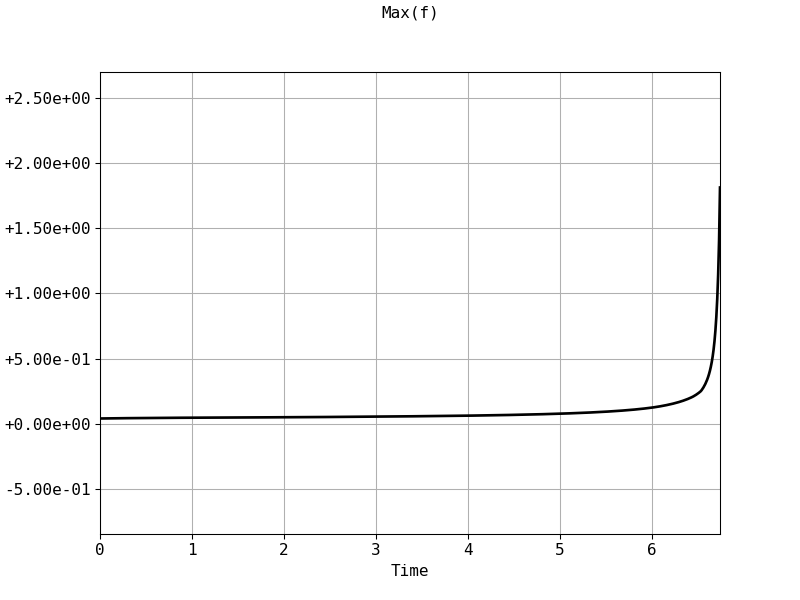}
\end{tabular}
& 
\begin{tabular}{c}\includegraphics[width=0.45\textwidth]{./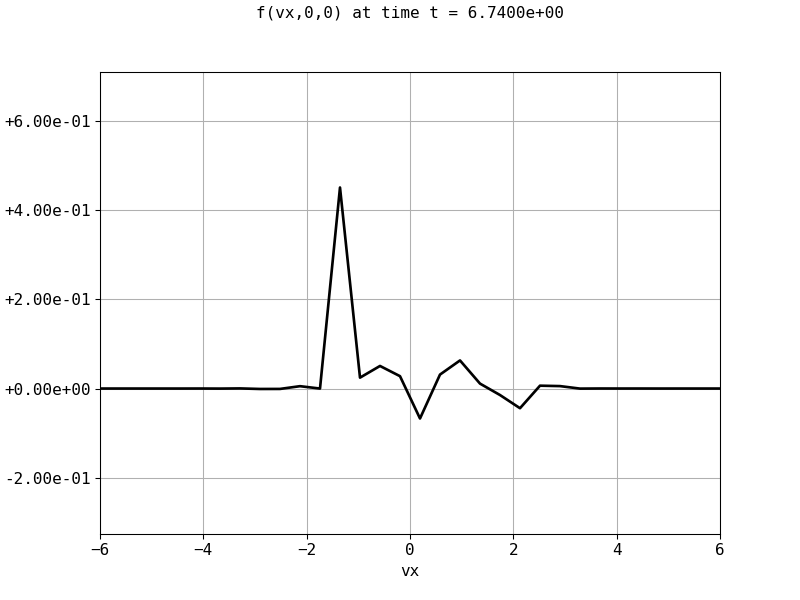}
\end{tabular}

\end{tabular}
\caption{\textbf{Trends to equilibrium.}  Time evolution of $\max f_h(t,\cdot)$ (left) and  profile of $f_{h}(6.74, (\cdot,0,0))$ (right) for the 3D Fermi-Dirac case with initial datum $f^{0,3}$ and $\hbar = 1.01 \hbar^{*}$. } \label{fermions_3d_ball_indicator_r101}

\end{figure}

%
%
%

%
%
%
 
 \medskip
 
 Let us now consider the case of a classical maxwellian $f^{0,4}$. We observe the expected relaxation to the corresponding steady state for $\hbar = r\hbar^{*}$ for $r \in \{0.1,0.5,0.8,0.9\}$ (see Figures \ref{fermions_3d_classical_maxwellian_r01-05} and \ref{fermions_3d_classical_maxwellian_r09}). Note that, for the latter case, even if the entropy cannot be defined since  $\max(f^{0}) > \hbar^{-3}$, we still have the relaxation of the distribution towards to expected steady state.

\begin{figure}
\begin{tabular}{cc}

\begin{tabular}{c}
\includegraphics[width=0.45\textwidth]{./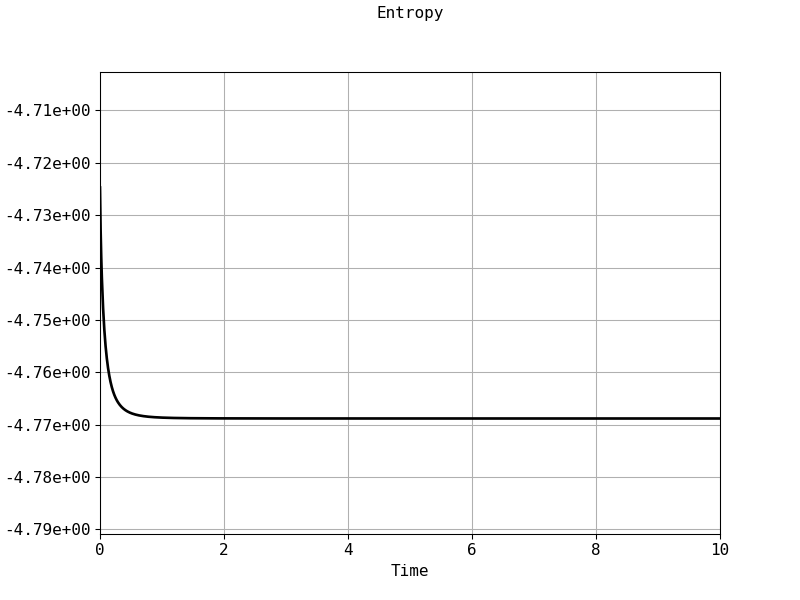}
\end{tabular}
& 
\begin{tabular}{c}\includegraphics[width=0.45\textwidth]{./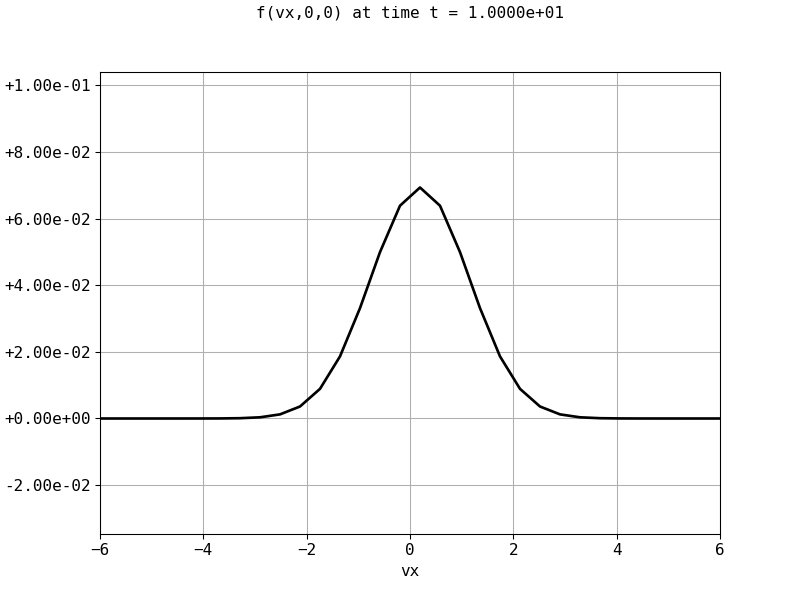}
\end{tabular}
%
%

\\

\begin{tabular}{c}
\includegraphics[width=0.45\textwidth]{./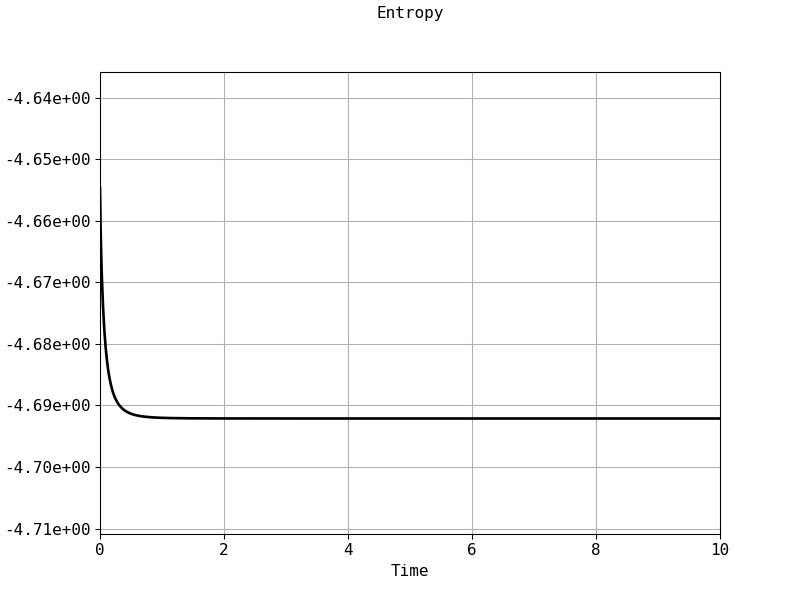}
\end{tabular}
& 
\begin{tabular}{c}\includegraphics[width=0.45\textwidth]{./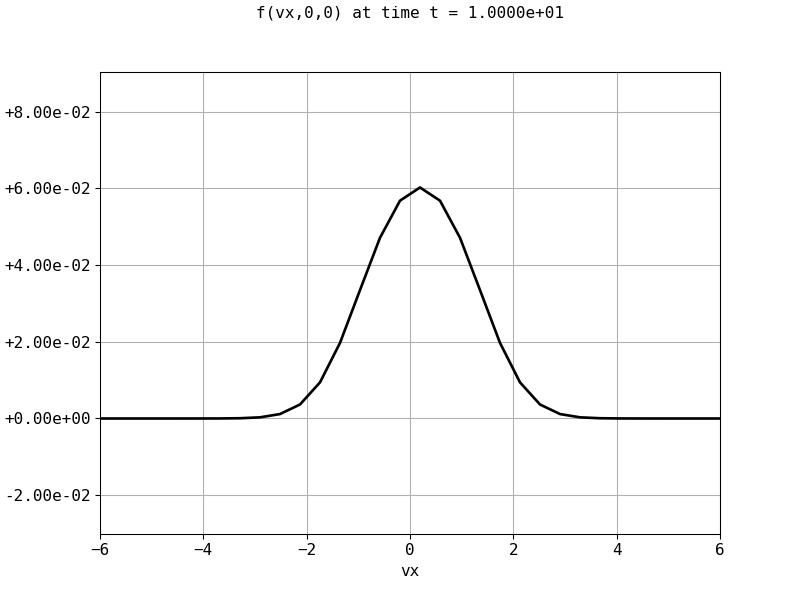}
\end{tabular}

\\

\begin{tabular}{c}
\includegraphics[width=0.45\textwidth]{./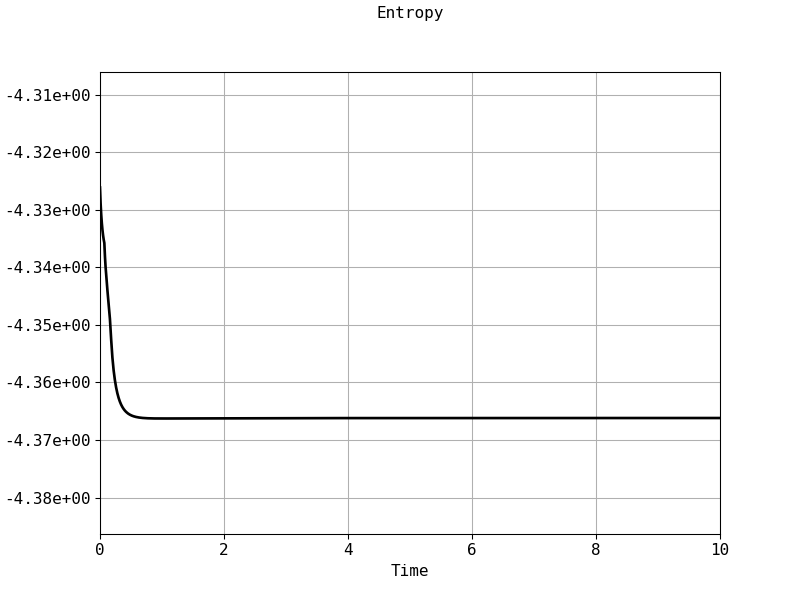}
\end{tabular}
& 
\begin{tabular}{c}\includegraphics[width=0.45\textwidth]{./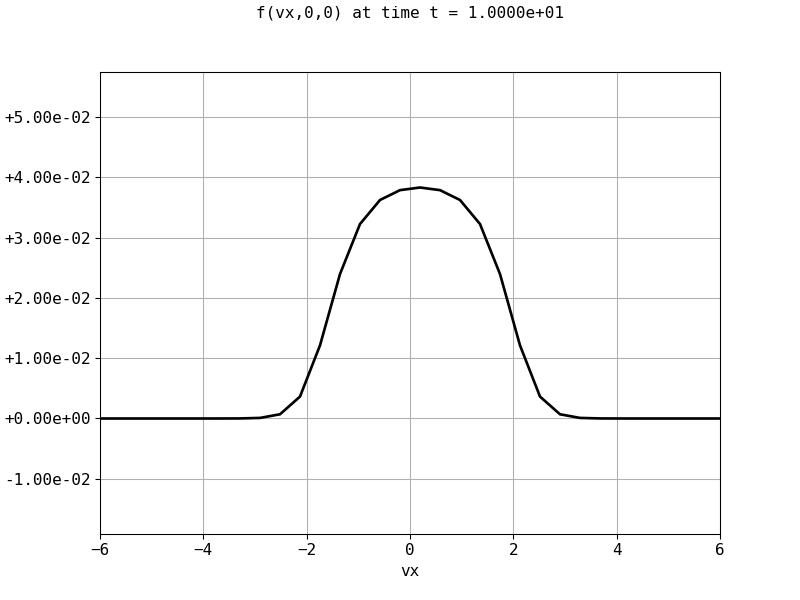}
\end{tabular}

\end{tabular}
\caption{\textbf{Trends to equilibrium.}  Time evolution of the entropy (left) and  profile of $f_{h}(10, (\cdot,0,0))$ (right) for the 3D Fermi-Dirac case with initial datum $f^{0,4}$ and $\hbar = r \hbar^{*}$ with  $r \in \{0.1, 0.2, 0.5, 0.8\}$ (top to bottom).} \label{fermions_3d_classical_maxwellian_r01-05}

\end{figure}

\begin{figure}
\begin{tabular}{cc}

\begin{tabular}{c}
\includegraphics[width=0.45\textwidth]{./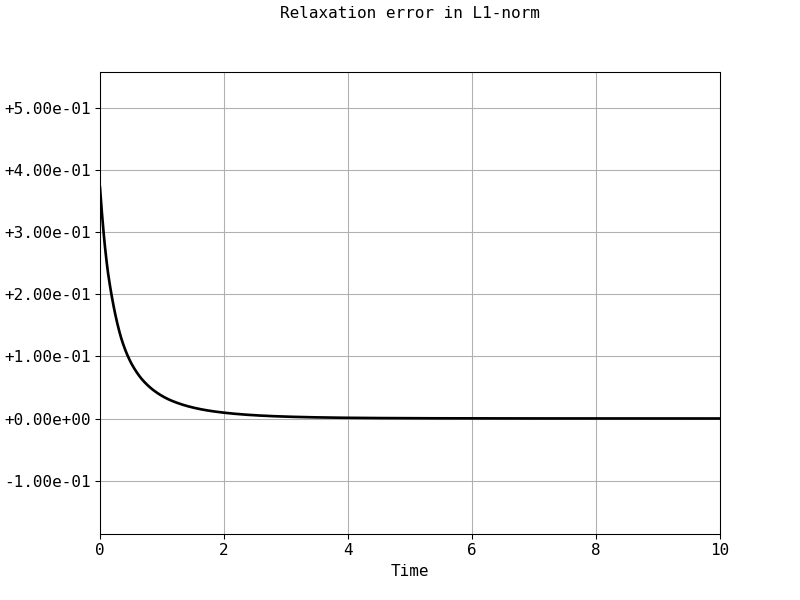}
\end{tabular}
& 
\begin{tabular}{c}\includegraphics[width=0.45\textwidth]{./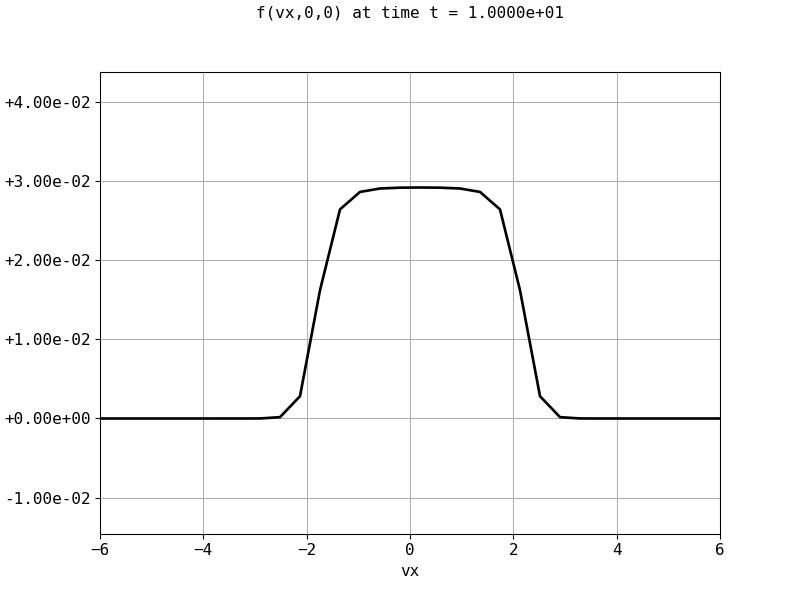}
\end{tabular}

\end{tabular}
\caption{\textbf{Trends to equilibrium.}  Time evolution of $\|f_{h}(t,\cdot)-f_{h}^{\infty}\|_{\ell^{1}}$ (left) and  profile of $f_{h}(10, (\cdot,0,0))$ (right) for the 3D Fermi-Dirac case with initial datum $f^{0,4}$ and $\hbar = 0.9 \hbar^{*}$.} \label{fermions_3d_classical_maxwellian_r09}

\end{figure}

Due to the coarse velocity mesh, taking higher values of $r$ leads to a numerical blow-up, even if $\hbar < \hbar^{*}$ (see Figure \ref{fermions_3d_classical_maxwellian_r095-099}).

\begin{figure}
\begin{tabular}{cc}

\begin{tabular}{c}
\includegraphics[width=0.45\textwidth]{./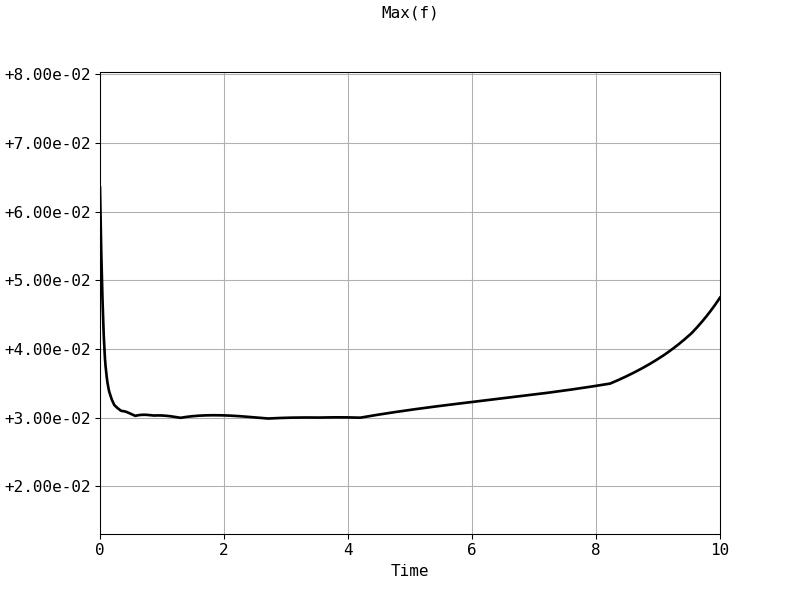}
\end{tabular}
& 
\begin{tabular}{c}\includegraphics[width=0.45\textwidth]{./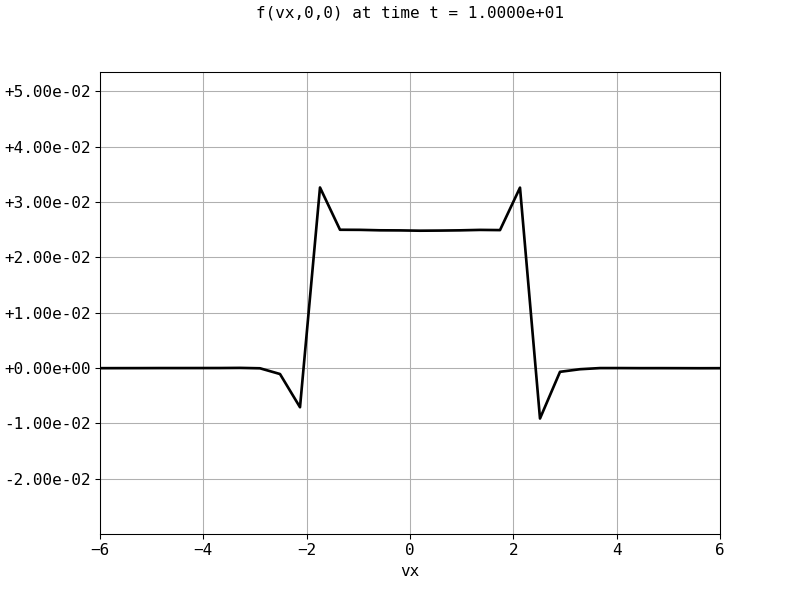}
\end{tabular}

\\

\begin{tabular}{c}
\includegraphics[width=0.45\textwidth]{./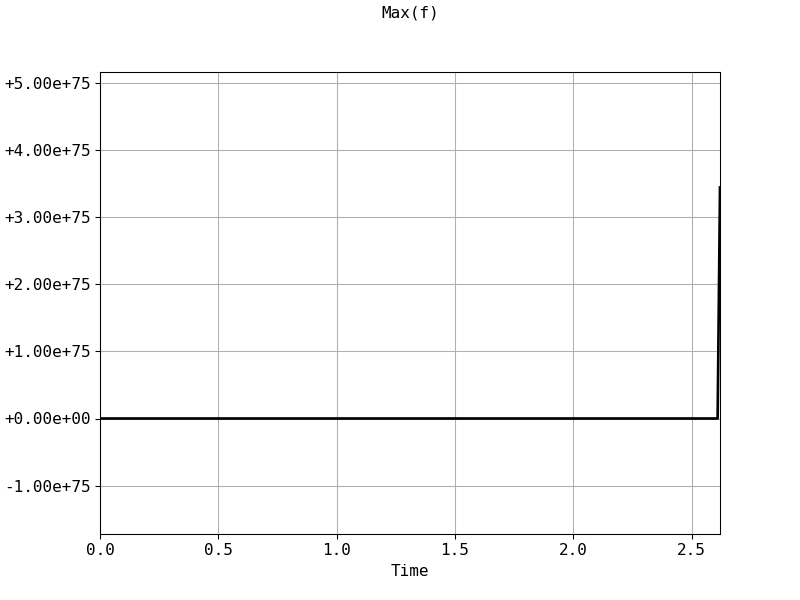}
\end{tabular}
& 
\begin{tabular}{c}\includegraphics[width=0.45\textwidth]{./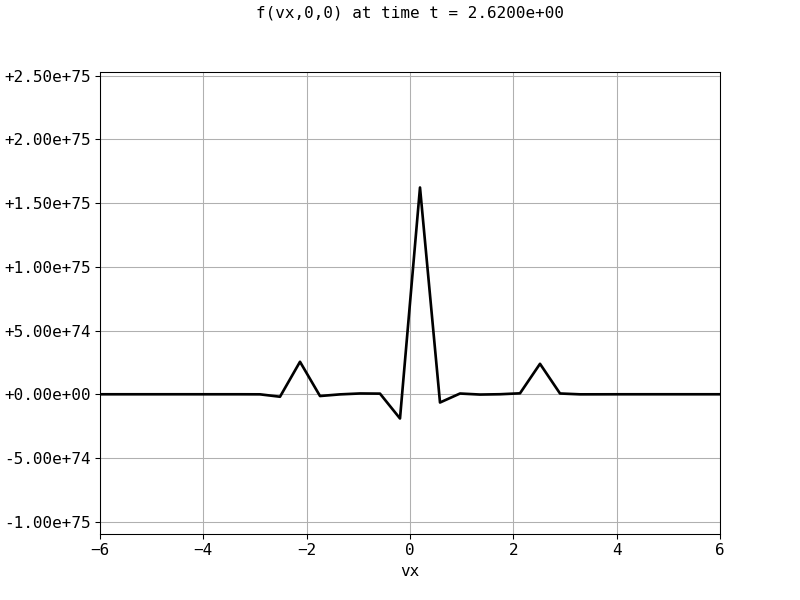}
\end{tabular}

\end{tabular}
\caption{\textbf{Trends to equilibrium.}  Time evolution of $\max f_h(t,\cdot)$ (left) and $\{v_y=v_z=0\}$--blowup profile of $f_{h}$ (right) for the 3D Fermi-Dirac case with initial datum $f^{0,4}$ and $\hbar = r \hbar^{*}$ with $r \in \{0.95,0.99\}$(top to bottom). } \label{fermions_3d_classical_maxwellian_r095-099}

\end{figure}

Finally, taking $r \geq 1$ leads to a numerical blow-up of the distribution as for the 2D case with similar conditions on $\hbar$ (see Figure \ref{fermions_3d_classical_maxwellian_r101}).

\begin{figure}
\begin{tabular}{cc}

\begin{tabular}{c}
\includegraphics[width=0.45\textwidth]{./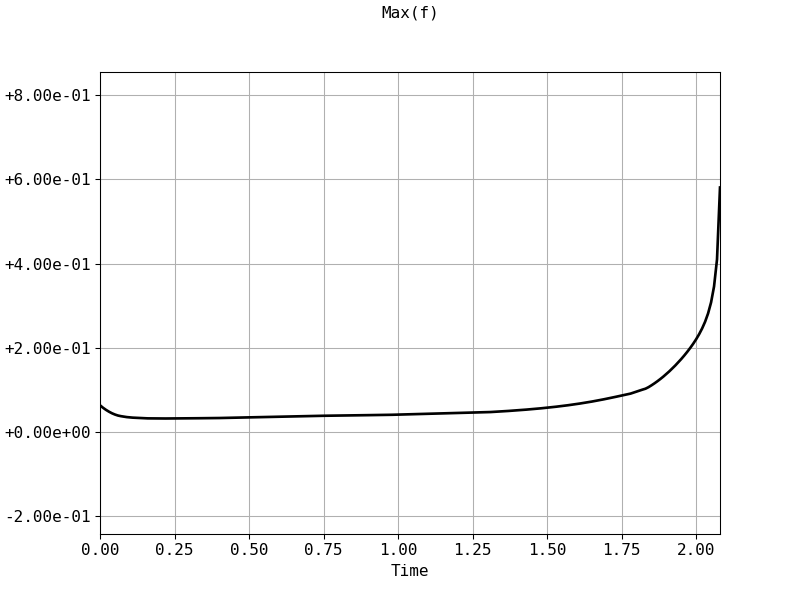}
\end{tabular}
& 
\begin{tabular}{c}\includegraphics[width=0.45\textwidth]{./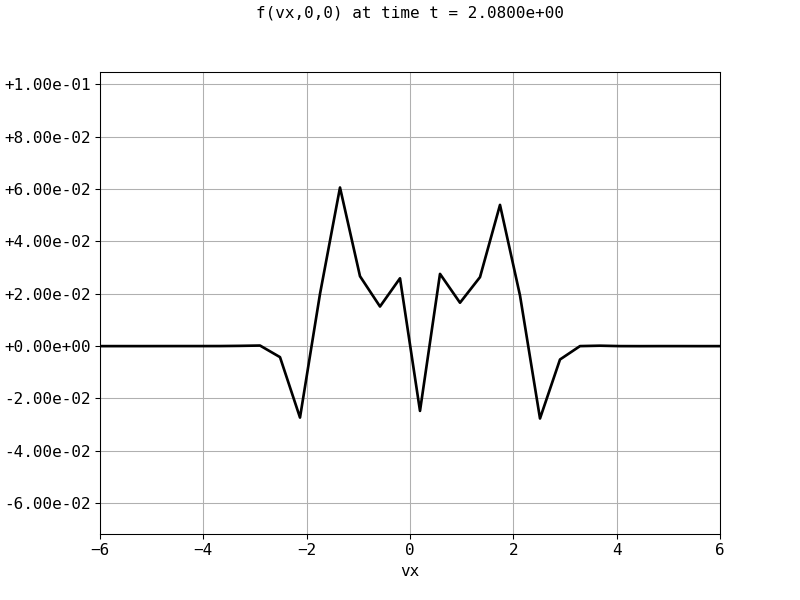}
\end{tabular}

\\

\begin{tabular}{c}
\includegraphics[width=0.45\textwidth]{./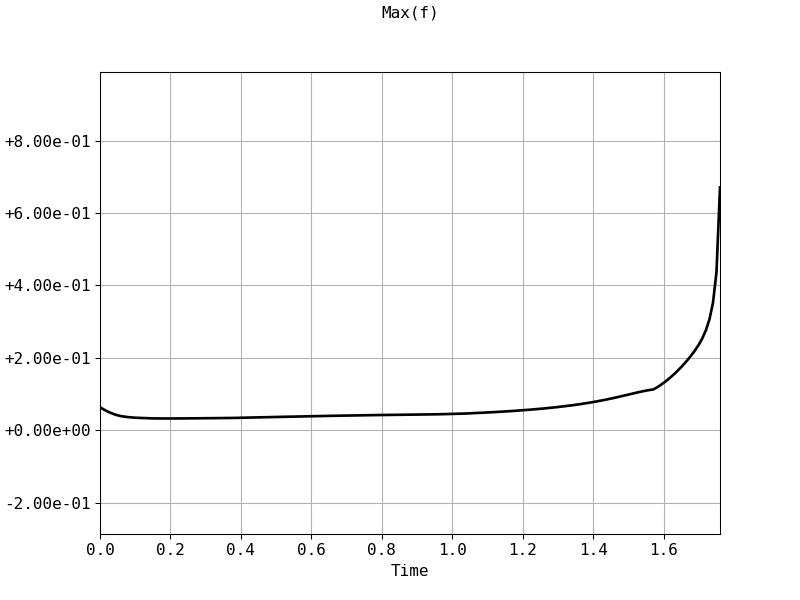}
\end{tabular}
& 
\begin{tabular}{c}\includegraphics[width=0.45\textwidth]{./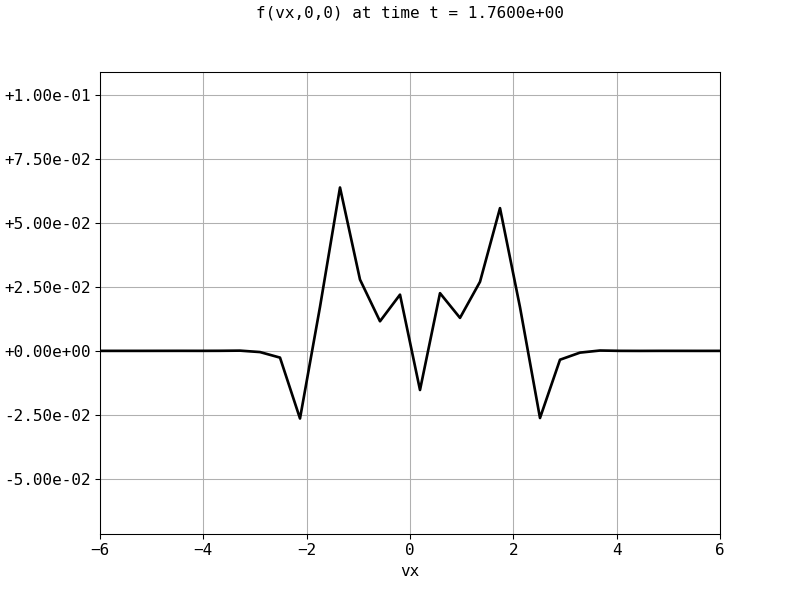}
\end{tabular}

\end{tabular}
\caption{\textbf{Trends to equilibrium.}  Time evolution of $\max f_h(t,\cdot)$ (left) and $\{v_y=v_z=0\}$--blowup profile of $f_{h}$ (right) for the 3D Fermi-Dirac case with initial datum $f^{0,4}$ and $\hbar = r \hbar^{*}$ with $r \in \{1,1.01\}$ (top to bottom).} \label{fermions_3d_classical_maxwellian_r101}

\end{figure}

\subsubsection{3D Bose-Einstein relaxation}

We now investigate the relaxation in time of the discrete distribution $f_{h}$ to the quantum $\mathcal{M}_{q}$ or the condensate distribution $\widetilde{\mathcal{M}_{q}}$ according to the value $\hbar$ with respect to 
\[ \hbar^{*} = \cfrac{1}{(\rho_{h}^{0})^{1/3}} \, \left(\cfrac{4\pi\,e_{h}^{0}}{3}\right)^{1/2} \, \cfrac{\zeta(3/2)^{5/6}}{\zeta(5/2)^{1/2}}.\] 
As for the Fermi-Dirac cases, we consider a $32^{3}$ velocity grid on the domain $[-L,L]^{3} = [-6,6]^{3}$ and an initial distribution defined as \eqref{fermions_3d_init_cmax} or \eqref{fermions_3d_init_ball}. 
This gives us the thresholds $\hbar^{*} \approx 3.97285$, and $\hbar^{*} \approx 4.81755$, respectively. We expect that the computed discrete distribution $f_{h}$ relaxes to $\mathcal{M}_{q}$ when $\hbar < \hbar^{*}$ and to $\widetilde{\mathcal{M}_{q}}$ when $\hbar \geq \hbar^{*}$ (\textit{i.e.} a blow-up of the distribution localized in $\mathbf{u}_{h}^{0}$). 

The computational limitations we noticed in the previous paragraph for handling 3D quantum cases are more restrictive here than for the 3D Fermi-Dirac case. Indeed, if $\hbar \to (\hbar^{*})^{-}$, the $32^{3}$ velocity grid we considered is not sufficient to catch the stiff gradients of the limit state $f^{\infty}$.
We can see in Figure \ref{bosons_3d_ball_indicator_r08} that for $\hbar = r\,\hbar^{*}$ with $r \in \{0.1,0.5,0.8\}$ we can reach successfully the expected limit state as the entropy (and the numerical relaxation error) decrease. 

\begin{figure}
\begin{tabular}{cc}

\begin{minipage}{0.45\textwidth}
\includegraphics[width=\textwidth]{./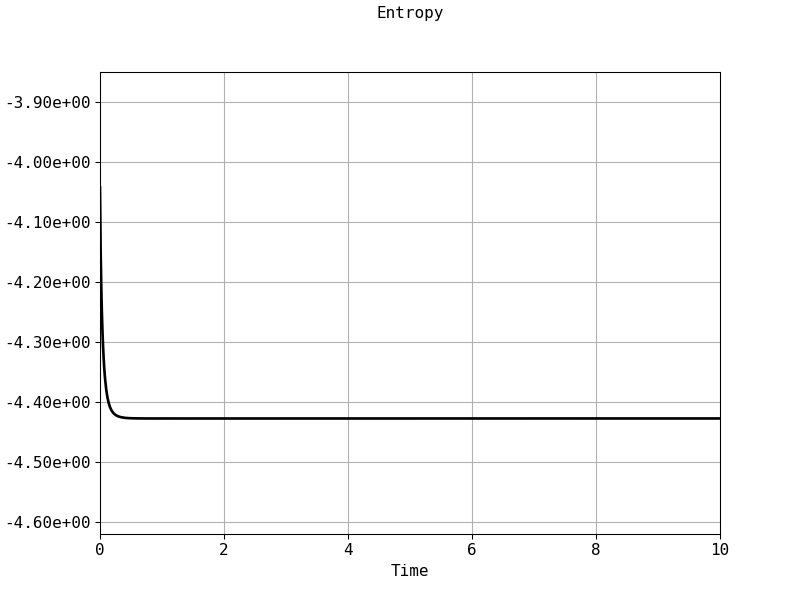}
\end{minipage}
& 
\begin{minipage}{0.45\textwidth}
\includegraphics[width=\textwidth]{./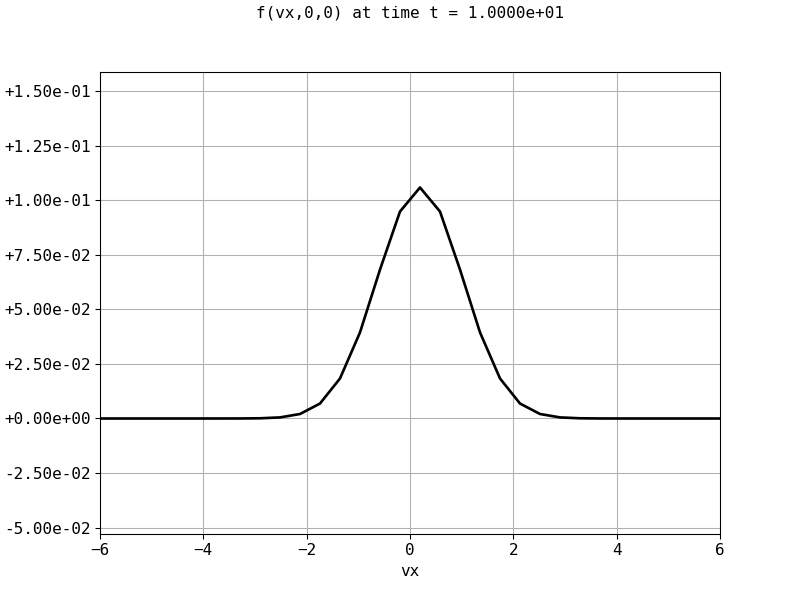}
\end{minipage}

\\

\begin{minipage}{0.45\textwidth}
\includegraphics[width=\textwidth]{./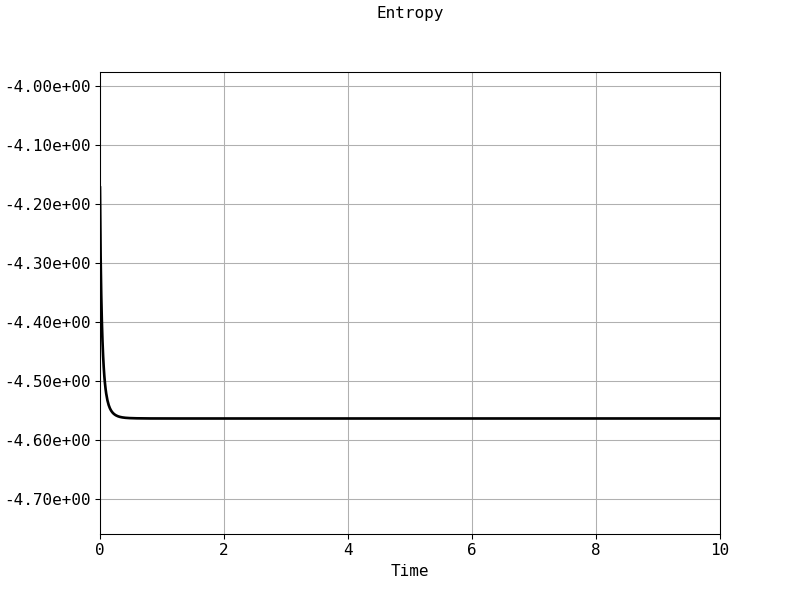}
\end{minipage}
&
\begin{minipage}{0.45\textwidth}
\includegraphics[width=\textwidth]{./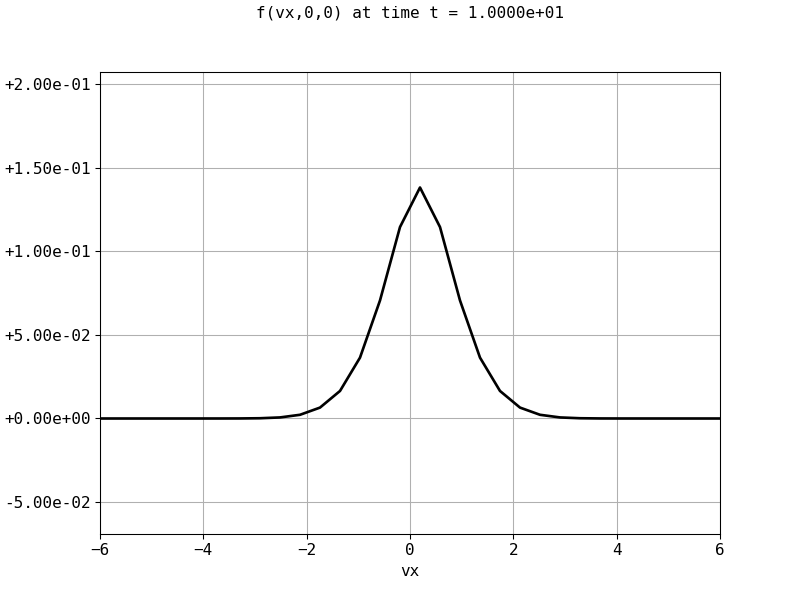}
\end{minipage}

\\

\begin{minipage}{0.45\textwidth}
\includegraphics[width=\textwidth]{./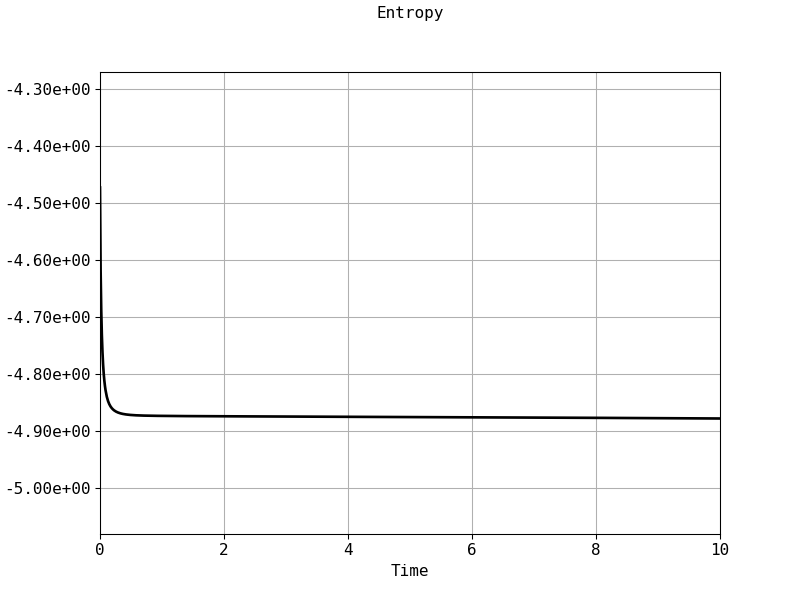}
\end{minipage}
& 
\begin{minipage}{0.45\textwidth}
\includegraphics[width=\textwidth]{./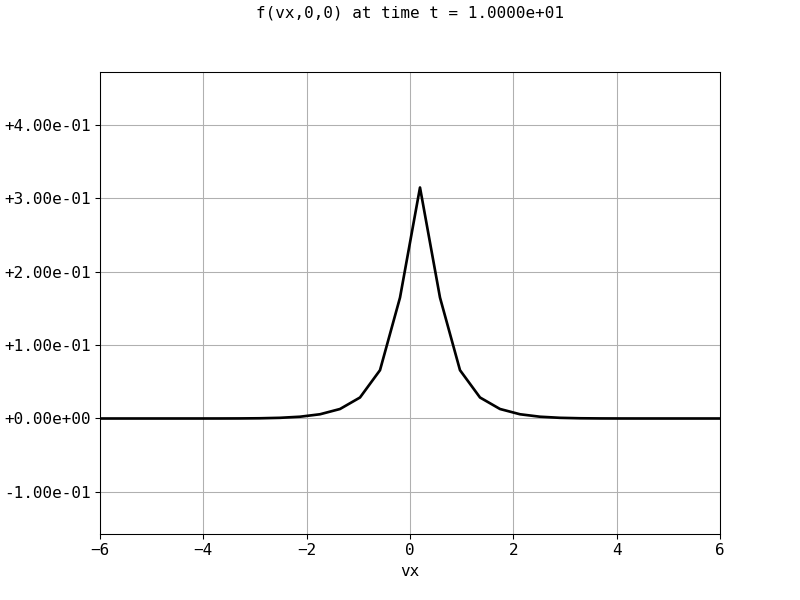}
\end{minipage}

\end{tabular}
\caption{\textbf{Trends to equilibrium.}  Time evolution of the entropy (left) and  profile of $f_{h}(10, (\cdot,0,0))$ (right) for the 3D Bose-Einstein case with initial datum $f^{0,3}$ and $\hbar = r \hbar^{*}$ with  $r \in \{0.1,  0.5, 0.8\}$ (top to bottom).} \label{bosons_3d_ball_indicator_r08}
\end{figure}


However, for higher values of $r \in (0.8,1)$, the coarseness of the velocity grid deteriorates the distribution profile after a small number of time iterations, even if $\hbar < \hbar^{*}$ (see Figure \ref{bosons_3d_ball_indicator_r095}).

\begin{figure}
\begin{tabular}{cc}

\begin{minipage}{0.45\textwidth}
\includegraphics[width=\textwidth]{./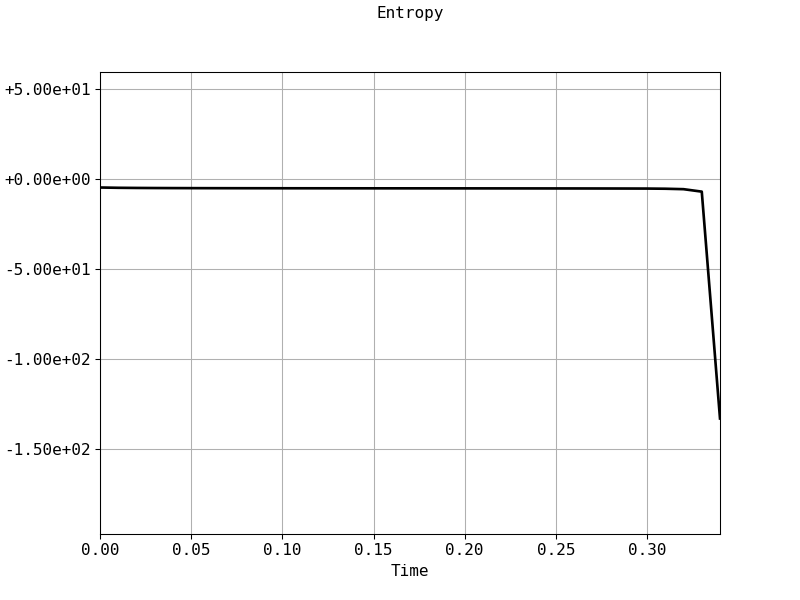}
\end{minipage}
& 
\begin{minipage}{0.45\textwidth}
\includegraphics[width=\textwidth]{./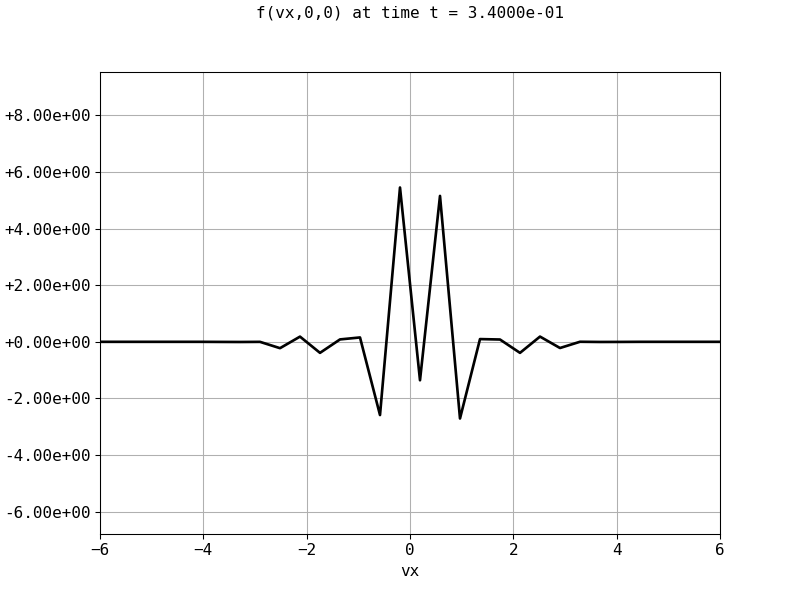}
\end{minipage}

\\

\begin{minipage}{0.45\textwidth}
\includegraphics[width=\textwidth]{./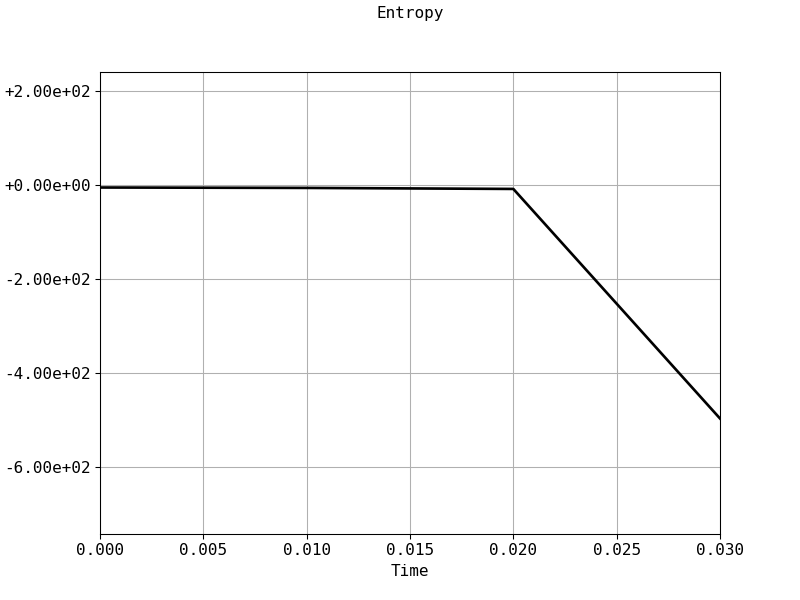}
\end{minipage}
&
\begin{minipage}{0.45\textwidth}
\includegraphics[width=\textwidth]{./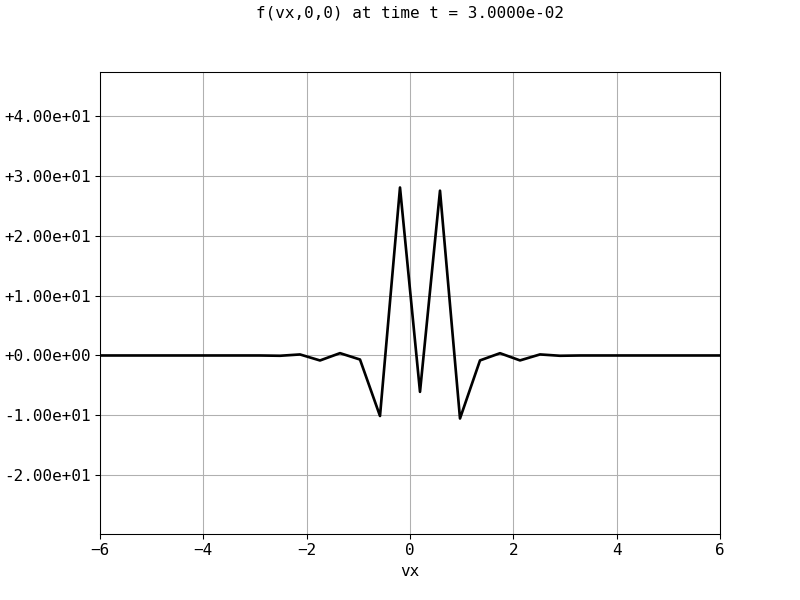}
\end{minipage}

\end{tabular}

\caption{\textbf{Trends to equilibrium.}   Time evolution of the entropy (left) and $\{v_y=v_z=0\}$--blowup profile of $f_{h}$ (right) for the 3D Bose-Einstein case with initial datum $f^{0,3}$ and $\hbar = r \hbar^{*}$ with $r \in \{0.9,0.95\}$ (top to bottom)}.
 \label{bosons_3d_ball_indicator_r095}
\end{figure}

Surprisingly, considering $\hbar \geq \hbar^{*}$, \textit{i.e.} a Bose-Einstein condensation and the associated singular limit state $\widetilde{\mathcal{M}}_{q}$, leads to numerical results that are less deteriorated than for $r \in \{0.9,0.95\}$ (see Figures \ref{bosons_3d_ball_indicator_r105}). 
We observe the expected blow-up in time of the discrete distribution $f_{h}$ but it occurs after a simulation time $T^{*}$ that seems to decreases as long as $\hbar$ becomes large. This phenomenon may be due to the fact that, in such case, the stiffness of the distribution is due to the Dirac mass and the singular quantum maxwellian in the definition of $\widetilde{\mathcal{M}_{q}}$, whereas in the case where $\hbar \to (\hbar^{*})^{-}$, the stiffness of $f^{\infty}$ is only due to the quantum maxwellian $\mathcal{M}_{q}$ that is almost singular in $\mathbf{u}_{h}^{0}$. 

\begin{figure}
\begin{tabular}{cc}
\begin{minipage}{0.45\textwidth}
\includegraphics[width=\textwidth]{./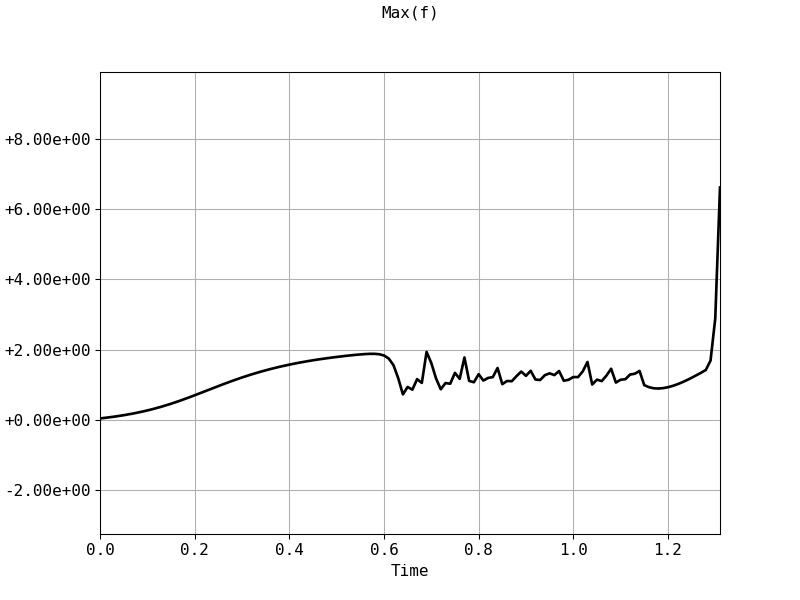}
\end{minipage}
&
\begin{minipage}{0.45\textwidth}
\includegraphics[width=\textwidth]{./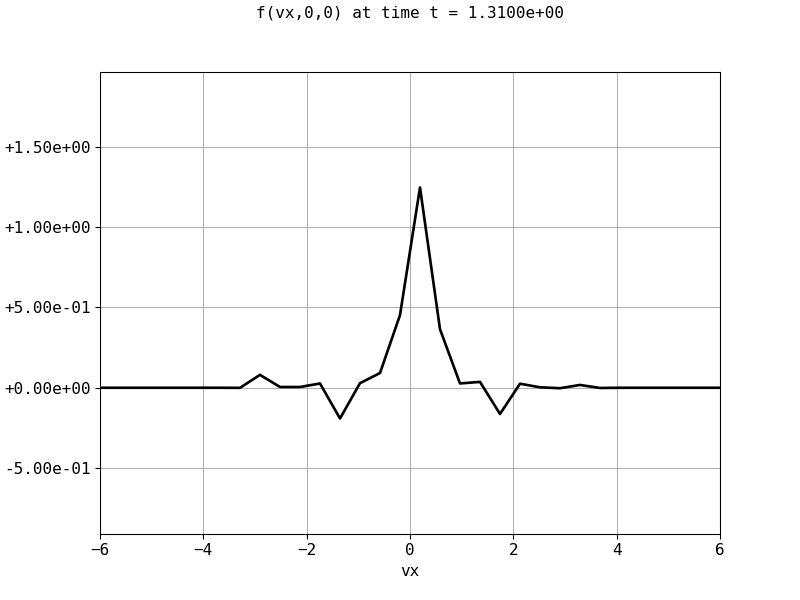}
\end{minipage}


\\

\begin{minipage}{0.45\textwidth}
\includegraphics[width=\textwidth]{./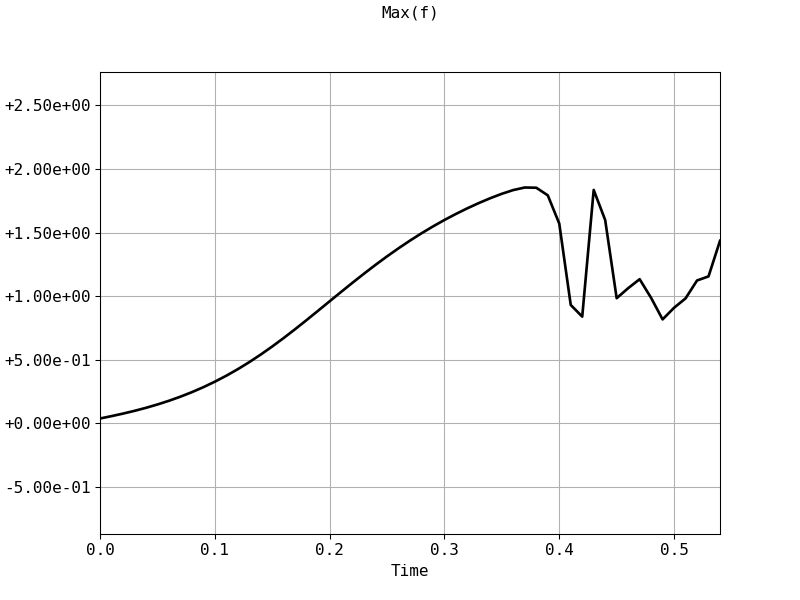}
\end{minipage}
&
\begin{minipage}{0.45\textwidth}
\includegraphics[width=\textwidth]{./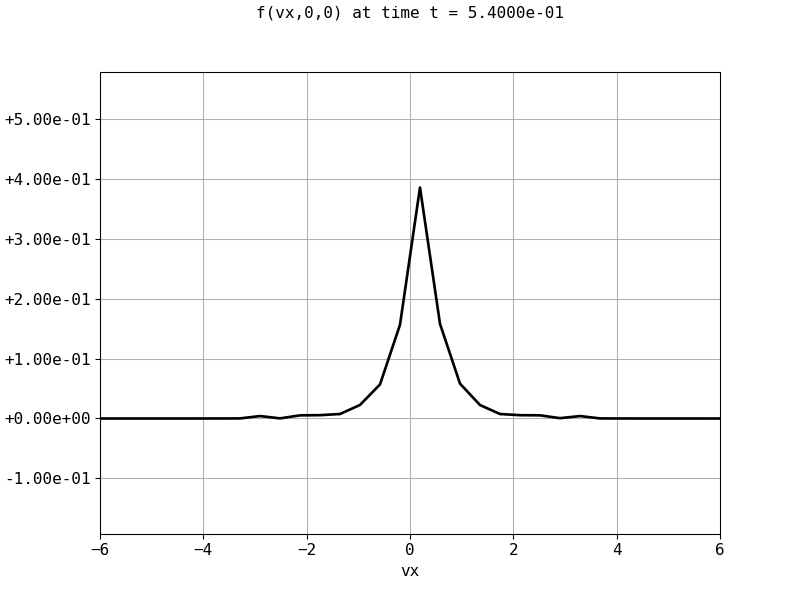}
\end{minipage}
\end{tabular}

\caption{\textbf{Trends to equilibrium.} Time evolution of $\max f_h(t,\cdot)$ (left) and $\{v_y=v_z=0\}$--blowup profile (right) for the 3D Bose-Einstein case with initial datum $f^{0,3}$ and $\hbar = r\hbar^{*}$ with $r \in{1, 1.05}$ (top to bottom}.
\label{bosons_3d_ball_indicator_r105}
\end{figure}


%
%

 All these observations are confirmed when we use the classical maxwellian $\mathcal{M}_{c}$ as initial datum (Figures omitted for redundancy).

\section{Conclusion and perspectives}
In this work, we illustrated the efficiency and accuracy of the Fast Spectral algorithm from \cite{FilbetHuJin:2012} along with the rescaling velocity method from \cite{FilbetRey:2012} for approximating the solutions to the Boltzmann-Nordheim equation. In particular, we showed that this numerical method is able to reproduce very accurately some of the main mathematical features of this equation, including the convergence towards singular steady states (namely the phenomena of Fermi-Dirac saturation and Bose-Einstein condensation). 
As a consequence, we believe that this method is perfectly suited to study some of the still unknown behavior of this model, in particular the rate of convergence towards the (singular) 3D Bose-Einstein condensates, that was conjectured in \cite{EscobedoVelazquez2015}.
Unfortunately, the accuracy needed for computing such singular solution in 3D is still unavailable in our approach.
 This motivates us for studying ways to make our simulation code faster in order to consider finer meshes and complete the classification of relaxation phenomena. This will be the main topic of the upcoming followup \cite{MoutonReyFast} of our work.
  
  \appendix
  
\section{Fast evaluation of the collision kernel} \label{eval_collision}

Let us briefly present how we practically compute the collision operator, by focusing on its most computationally expensive part $\mathcal{Q}_{1,q}^{R}(G_{h},G_{h},G_{h})$.

We remark that, for any $\mathbf{n} \in I_{\mathbf{N}}$,
\begin{displaymath}
\begin{split}
\widehat{\mathcal{Q}_{1,q}^{R}(G_{h},G_{h},G_{h})}_{\mathbf{n}} &= \sum_{\substack{\mathbf{k},\mathbf{l},\mathbf{m} \, \in \, I_{N} \\ \mathbf{k}+\mathbf{l}+\mathbf{m} \,=\, \mathbf{n}}} \beta^{R}(\mathbf{k}+\mathbf{m},\mathbf{l}+\mathbf{m}) \, \widehat{G}_{\mathbf{k}} \, \widehat{G}_{\mathbf{l}}\,\widehat{G}_{\mathbf{m}} \\
&= C\, \sum_{p\,=\,0}^{P-1} \sum_{\substack{\mathbf{k},\mathbf{l},\mathbf{m} \, \in \, I_{N} \\ \mathbf{k}+\mathbf{l}+\mathbf{m} \,=\, \mathbf{n}}} \alpha_{R,p}(\mathbf{k}+\mathbf{m})\,\alpha_{R,p}'(\mathbf{l}+\mathbf{m}) \, \widehat{G}_{\mathbf{k}}\,\widehat{G}_{\mathbf{l}} \, \widehat{G}_{\mathbf{m}} \\
&= C\, \sum_{p\,=\,0}^{P-1} \sum_{\mathbf{k},\mathbf{l}\,\in \, I_{N}} \alpha_{R,p}(\mathbf{n}-\mathbf{l})\,\alpha_{R,p}'(\mathbf{n}-\mathbf{k}) \, \widehat{G}_{\mathbf{k}}\,\widehat{G}_{\mathbf{l}} \, \widehat{G}_{\mathbf{n}-\mathbf{k}-\mathbf{l}} \\
&= C\, \sum_{p\,=\,0}^{P-1} \sum_{\mathbf{k},\mathbf{l}\,\in \, I_{N}} \alpha_{R,p}(\mathbf{l})\,\alpha_{R,p}'(\mathbf{k}) \, \widehat{G}_{\mathbf{n}-\mathbf{k}}\,\widehat{G}_{\mathbf{n}-\mathbf{l}} \, \widehat{G}_{\mathbf{k}+\mathbf{l}-\mathbf{n}} \, .
\end{split}
\end{displaymath}
Reminding that $\widehat{G}_{\mathbf{k}+\mathbf{l}-\mathbf{n}}$ can be expressed as
\begin{displaymath}
\widehat{G}_{\mathbf{k}+\mathbf{l}-\mathbf{n}} = \cfrac{1}{N}\,\sum_{\mathbf{j}\,\in\,I_{N}} G_{\mathbf{j}}\, e^{-2i\pi\,\frac{(\mathbf{k}-\mathbf{l}-\mathbf{n})\cdot\mathbf{j}}{N}}\, ,
\end{displaymath}
We can write
\begin{displaymath}
\begin{split}
\widehat{\mathcal{Q}_{1,q}^{R}(G_{h},G_{h},G_{h})}_{\mathbf{n}} = \cfrac{C}{N}\, \sum_{p\,=\,0}^{P-1} \sum_{\mathbf{j}\,\in\,I_{N}} G_{\mathbf{j}} \, e^{2i\pi\,\frac{\mathbf{n}\cdot\mathbf{j}}{N}} \, &\left[ \sum_{\mathbf{l}\,\in\,I_{N}} \alpha_{R,p}(\mathbf{l})\,\widehat{G}_{\mathbf{n}-\mathbf{l}} \, e^{-2i\pi\,\frac{\mathbf{l}\cdot\mathbf{j}}{N}} \right] \\
&\times \left[ \sum_{\mathbf{k}\,\in\,I_{N}} \alpha_{R,p}'(\mathbf{k})\,\widehat{G}_{\mathbf{n}-\mathbf{k}} \, e^{-2i\pi\,\frac{\mathbf{k}\cdot\mathbf{j}}{N}} \right] \, .
\end{split}
\end{displaymath}
For any $\mathbf{j} \in I_{N}$, we define $\widehat{\mathfrak{g}_{R,p,\mathbf{j}}}$ and $\widehat{\mathfrak{g}_{R,p,\mathbf{j}}'}$ on $I_{N}$ such that
\begin{displaymath}
\begin{split}
(\widehat{\mathfrak{g}_{R,p,\mathbf{j}}})_{\mathbf{k}} = \alpha_{R,p}(\mathbf{k})\, e^{-2i\pi\,\frac{\mathbf{k}\cdot\mathbf{j}}{N}} \, , \qquad
(\widehat{\mathfrak{g}_{R,p,\mathbf{j}}'})_{\mathbf{k}} = \alpha_{R,p}'(\mathbf{k})\, e^{-2i\pi\,\frac{\mathbf{k}\cdot\mathbf{j}}{N}} \, ,
\end{split}
\end{displaymath}
so we have
\begin{displaymath}
\begin{split}
\widehat{\mathcal{Q}_{1,q}^{R}(G_{h},G_{h},G_{h})}_{\mathbf{n}} &= \cfrac{C}{N}\, \sum_{p\,=\,0}^{P-1} \sum_{\mathbf{j}\,\in\,I_{N}} G_{\mathbf{j}} \,e^{2i\pi\,\frac{\mathbf{n}\cdot\mathbf{j}}{N}} \left[ \sum_{\mathbf{l}\,\in\,I_{N}} (\widehat{\mathfrak{g}_{R,p,\mathbf{j}}})_{\mathbf{l}} \,\widehat{G}_{\mathbf{n}-\mathbf{l}} \right] \, \left[ \sum_{\mathbf{k}\,\in\,I_{N}} (\widehat{\mathfrak{g}_{R,p,\mathbf{j}}'})_{\mathbf{k}} \,\widehat{G}_{\mathbf{n}-\mathbf{k}} \right] \\
&= \cfrac{C}{N}\, \sum_{p\,=\,0}^{P-1} \sum_{\mathbf{j}\,\in\,I_{N}} G_{\mathbf{j}} \,e^{2i\pi\,\frac{\mathbf{n}\cdot\mathbf{j}}{N}} \, ( \widehat{\mathfrak{g}_{R,p,\mathbf{j}}} * \widehat{G_{h}})_{\mathbf{n}} \, ( \widehat{\mathfrak{g}_{R,p,\mathbf{j}}'} * \widehat{G_{h}})_{\mathbf{n}} \\
&= \cfrac{C}{N}\, \sum_{p\,=\,0}^{P-1} \sum_{\mathbf{j}\,\in\,I_{N}} G_{\mathbf{j}} \,e^{2i\pi\,\frac{\mathbf{n}\cdot\mathbf{j}}{N}} \, \widehat{(\mathfrak{g}_{R,p,\mathbf{j}} \times G_{h})}_{\mathbf{n}} \, \widehat{(\mathfrak{g}_{R,p,\mathbf{j}}' \times G_{h})}_{\mathbf{n}} \, ,
\end{split}
\end{displaymath}
where $\mathfrak{g}_{R,p,\mathbf{j}}$ and $\mathfrak{g}_{R,p,\mathbf{j}}'$ are obtained with inverse Fourier transform of $\widehat{\mathfrak{g}_{R,p,\mathbf{j}}}$ and $\widehat{\mathfrak{g}_{R,p,\mathbf{j}}'}$ respectively. \\

  \bibliographystyle{acm}
  \bibliography{biblio}

\begin{thebibliography}{10}

\bibitem{allemand2010incompressible}
{\sc Allemand, T.}
\newblock {The incompressible Euler limit of the Boltzmann equation for a gas
  of fermions}.
\newblock {\em arXiv preprint arXiv:1006.4048\/} (2010).

\bibitem{boersma1991asymptotic}
{\sc Boersma, J., and Glasser, M.}
\newblock {Asymptotic Expansion of a Class of Fermi--Dirac Integrals}.
\newblock {\em SIAM J. Num. Anal. 22}, 3 (1991), 810--820.

\bibitem{brent1973algorithms}
{\sc Brent, R.~P.}
\newblock {\em {Algorithms for Minimization without Derivatives}}.
\newblock Prentice-Hall, Englewood Cliffs, NJ, 1973.

\bibitem{BriantEinav2013}
{\sc Briant, M., and Einav, A.}
\newblock {On the Cauchy problem for the homogeneous Boltzmann-Nordheim
  equation for bosons: local existence, uniqueness and creation of moments}.
\newblock {\em J. Stat. Phys. 163}, 5 (June 2016), 1108--1156.

\bibitem{CarrilloHopfWolfram2020}
{\sc Carrillo, J.~A., Hopf, K., and Wolfram, M.-T.}
\newblock {Numerical study of Bose–Einstein condensation in the
  Kaniadakis–Quarati model for bosons}.
\newblock {\em Kinet. Relat. Models 13\/} (2020), 507--529.

\bibitem{CIP:94}
{\sc Cercignani, C., Illner, R., and Pulvirenti, M.}
\newblock {\em The Mathematical Theory of Dilute Gases}, vol.~106 of {\em
  Applied Mathematical Sciences}.
\newblock Springer-Verlag, New York, 1994.

\bibitem{dolbeault1994kinetic}
{\sc Dolbeault, J.}
\newblock Kinetic models and quantum effects: a modified boltzmann equation for
  fermi-dirac particles.
\newblock {\em Arch. Rat. Mech. Anal. 127}, 2 (1994), 101--131.

\bibitem{EscobedoMischler:2001}
{\sc Escobedo, M., and Mischler, S.}
\newblock On a quantum {B}oltzmann equation for a gas of photons.
\newblock {\em J. Math. Pures Appl 80\/} (2001), 471--515.

\bibitem{escobedo2005entropy}
{\sc Escobedo, M., Mischler, S., and Valle, M.~A.}
\newblock Entropy maximisation problem for quantum relativistic particles.
\newblock {\em Bull. Soc. Math. Fran. 133}, 1 (2005), 87--120.

\bibitem{EscobedoVelazquez2015}
{\sc Escobedo, M., and Vel\'azquez, J.~J.~L.}
\newblock {Finite time blow-up for the boson Nordheim equation}.
\newblock {\em Invent. Math. 200}, 3 (2015), 761--847.

\bibitem{FilbetHuJin:2012}
{\sc Filbet, F., Hu, J.~W., and Jin, S.}
\newblock A numerical scheme for the quantum boltzmann equation efficient in
  the fluid regim.
\newblock {\em ESAIM Math. Model. Numer. Anal. 42\/} (2012), 443--463.

\bibitem{FM11}
{\sc Filbet, F., and Mouhot, C.}
\newblock {{A}nalysis of Spectral Methods for the Homogeneous {B}oltzmann
  Equation}.
\newblock {\em Trans. Amer. Math. Soc. 363\/} (2011), 1947--1980.

\bibitem{FiMoPa:2006}
{\sc Filbet, F., Mouhot, C., and Pareschi, L.}
\newblock {Solving the Boltzmann Equation in N log2 N}.
\newblock {\em SIAM J. Sci. Comput. 28}, 3 (2007), 1029--1053.

\bibitem{FilbetPareschiRey:2015}
{\sc Filbet, F., Pareschi, L., and Rey, T.}
\newblock {On steady-state preserving spectral methods for homogeneous
  {B}oltzmann equations}.
\newblock {\em C. R. Acad. Sci. Paris, Ser. I 353}, 4 (2015), 309--314.

\bibitem{FilbetRey:2012}
{\sc Filbet, F., and Rey, T.}
\newblock {A Rescaling Velocity Method for Dissipative Kinetic Equations -
  Applications to Granular Media}.
\newblock {\em J. Comput. Phys. 248\/} (2013), 177--199.

\bibitem{Filbet:2004}
{\sc Filbet, F., and Russo, G.}
\newblock A rescaling velocity method for kinetic equations: the homogeneous
  case.
\newblock In {\em Modelling and numerics of kinetic dissipative systems\/}
  (Hauppauge, NY, 2006), Nova Sci. Publ., pp.~191--202.

\bibitem{Hu2015}
{\sc Hu, J., Jin, S., and Wang, L.}
\newblock An asymptotic-preserving scheme for the semiconductor {B}oltzmann
  equation with two-scale collisions: a splitting approach.
\newblock {\em Kinet. Relat. Models 8}, 4 (2015), 707--723.

\bibitem{hu2020new}
{\sc Hu, J., Qi, K., and Yang, T.}
\newblock {A new stability and convergence proof of the Fourier-Galerkin
  spectral method for the spatially homogeneous Boltzmann equation}.
\newblock {\em arXiv preprint arXiv:2007.05184\/} (2020).

\bibitem{Hu2015a}
{\sc Hu, J., and Wang, L.}
\newblock An asymptotic-preserving scheme for the semiconductor {B}oltzmann
  equation toward the energy-transport limit.
\newblock {\em J. Comput. Phys. 281\/} (2015), 806--824.

\bibitem{HuYi11}
{\sc Hu, J., and Ying, L.}
\newblock {A fast spectral algorithm for the quantum Boltzmann collision
  operator}.
\newblock {\em Commun. Math. Sci. 10}, 3 (2012), 989--999.

\bibitem{HuYing:2015}
{\sc Hu, J., and Ying, L.}
\newblock A fast algorithm for the energy space boson {B}oltzmann collision
  operator.
\newblock {\em Math. Comp. 84\/} (2015), 271--288.

\bibitem{JinPareschi2000}
{\sc Jin, S., and Pareschi, L.}
\newblock {Discretization of the Multiscale Semiconductor Boltzmann Equation by
  Diffusive Relaxation Schemes}.
\newblock {\em J. Comput. Phys. 330\/} (2000), 312--330.

\bibitem{Klar1999}
{\sc Klar, A.}
\newblock {A Numerical Method for Kinetic Semiconductor Equations in the
  Drift-Diffusion Limit}.
\newblock {\em SIAM J. Sci. Comput. 20}, 5 (1999), 1696--1712.

\bibitem{li2017global}
{\sc Li, W., and Lu, X.}
\newblock {Global existence of solutions of the Boltzmann equation for
  Bose–Einstein particles with anisotropic initial data}.
\newblock {\em J. Funct. Anal. 276}, 1 (2019), 231 -- 283.

\bibitem{Lu2014}
{\sc Lu, X.}
\newblock The {B}oltzmann equation for {B}ose-{E}instein particles: regularity
  and condensation.
\newblock {\em J. Stat. Phys. 156}, 3 (2014), 493--545.

\bibitem{MoPa:2006}
{\sc Mouhot, C., and Pareschi, L.}
\newblock Fast algorithms for computing the {B}oltzmann collision operator.
\newblock {\em Math. Comp. 75}, 256 (2006), 1833--1852 (electronic).

\bibitem{MoutonReyFast}
{\sc Mouton, A., and Rey, T.}
\newblock {On Deterministic Numerical Methods for the quantum
  Boltzmann-Nordheim Equation. II. Fast algorithms.}
\newblock Work in Progress.

\bibitem{PareschiMarkowich:2005}
{\sc Pareschi, L., and Markowich, P.}
\newblock Fast, conservative and entropic numerical methods for the {B}osonic
  {B}oltzmann equation.
\newblock {\em Numer. Math. 99\/} (2005), 509--532.

\bibitem{PP96}
{\sc Pareschi, L., and Perthame, B.}
\newblock {A Fourier Spectral Method for Homogeneous Boltzmann Equations}.
\newblock {\em Transport Theory Statist. Phys. 25}, 3 (1996), 369--382.

\bibitem{pareschi2021moment}
{\sc Pareschi, L., and Rey, T.}
\newblock {Moment preserving Fourier-Galerkin spectral methods and application
  to the Boltzmann equation}.
\newblock {\em arXiv preprint arXiv:2105.13158\/} (2021).

\bibitem{PareschiRey2020}
{\sc Pareschi, L., and Rey, T.}
\newblock On the stability of equilibrium preserving spectral methods for the
  homogeneous {B}oltzmann equation.
\newblock {\em App. Math. Letters 120\/} (Oct. 2021), 107187.

\bibitem{ReyTan:2014}
{\sc Rey, T., and Tan, C.}
\newblock {A Rescaling Velocity Method for Collisional Kinetic Equations --
  Application to Flocking}.
\newblock {\em SIAM J. Numer. Anal. 54}, 2 (2016), 641--664.

\bibitem{shen2011spectral}
{\sc Shen, J., Tang, T., and Wang, L.-L.}
\newblock {\em Spectral methods: algorithms, analysis and applications},
  vol.~41.
\newblock Springer Science \& Business Media, 2011.

\bibitem{UehlingUhlenbeck:1933}
{\sc Uehling, E., and Uhlenbeck, G.}
\newblock Transport phenomena in {E}instein-{B}ose and {F}ermi-{D}irac gases.
  {I}.
\newblock {\em Phys. Rev. 43\/} (1993), 552--561.

\end{thebibliography}
\end{document}